\newtheorem{theorem}{Theorem}[section]
\newtheorem{corollary}[theorem]{Corollary}
 \newtheorem{lemma}[theorem]{Lemma}
 \newtheorem{proposition}[theorem]{Proposition}
 \theoremstyle{definition}
 \newtheorem{definition}[theorem]{Definition}
 \theoremstyle{remark}
 \newtheorem{remark}[theorem]{Remark}
 \numberwithin{equation}{subsection}
\newcommand{\cat}{{\sf {cat}}} 
\newcommand{\secat}{{\sf {secat}}} 
\newcommand{\tc}{{\sf {TC}}} 
\newcommand{\sn}{{\sf {sec}}} 
\newcommand{\D}{{\sf {D}}}
\begin{document}

\title{Generalized topological complexity and its monoidal version}

\author{J.M. Garc\'{\i}a-Calcines\footnote{Universidad de La Laguna,
Facultad de Ciencias, Departamento de Matem\'aticas,
Estad\'{\i}stica e I.O., 38271 La Laguna, Spain. E-mail:
\texttt{jmgarcal@ull.edu.es}} }

\maketitle

\begin{abstract}
In the context of the Lusternik-Schnirelmann category, researcher T. Srinivasan demonstrated that when the space under consideration is an absolute neighborhood retract, its category can be realized through arbitrary subsets, not necessarily open ones. The primary aim of this survey is to illustrate how this result has been extended to the case of topological complexity and its monoidal version, along with some of its most significant implications.
\end{abstract}

\vspace{0.5cm}
\noindent{2010 \textit{Mathematics Subject Classification} : 55M30, 55M15, 55P99.}\\
\noindent{\textit{Keywords} : Lusternik-Schnirelmann category, topological complexity of a space, sectional
category, topological complexity of a map, monoidal topological complexity, relative category}
\vspace{0.2cm}

\section*{Introduction}
In a series of two articles, \cite{Sr,Sr2}, the mathematician T. Srinivasan, a student of A. Dranishnikov, introduced the concept of generalized Lusternik-Schnirelmann category. The idea behind this concept is quite simple; it involves considering coverings by arbitrary contractible subsets in the space, instead of taking open (or closed) subsets with this property. The generalized LS category serves as a lower bound for the classical LS category, and the two may not necessarily coincide. However, what is truly remarkable in T. Srinivasan's work is that she managed to demonstrate that if the space under consideration is an absolute neighborhood retract (ANR space or just ANR, for short), then the notion of classical LS category coincides with its generalized version. This opens an important avenue in the development of this homotopical numerical invariant, as it could greatly simplify its calculation in many cases, a matter that has yet to be fully explored.

After this magnificent result, it is natural to wonder whether it can be extended to a broader context, such as that of topological complexity in the sense of M. Farber \cite{F}. The answer is affirmative, as we are going to display in this survey. The concept of generalized topological complexity emerges, which does not require open coverings but rather coverings by arbitrary subsets. We will observe that when the space in question has the homotopy type of an ANR space (or of a CW-complex), then the topological complexity of that space coincides with its generalized version. The broad scope of this class of spaces, combined with the lack of specific requirements imposed on the coverings, significantly simplifies the search for explicit motion planning algorithms, as they no longer need to be tame (see \cite[Def. 4.4]{F2}).

There are two approaches to generalized topological complexity through which the analogous result to Srinivasan's for the LS category is proven. Firstly, we have the approach given by P. Pa\v{v}esi\'{c} \cite{Pav}, focusing on the topological complexity of a continuous map. It was within this framework that the initial results establishing the equality between topological complexity and its generalized counterpart were revealed. The only restriction is that Pa\v{v}esi\'{c} considers compactness among ANR spaces, possibly influenced by T. Srinivasan's early work \cite{Sr}, where compact ANR spaces were also considered.
The second approach, subsequent to Pa\v{v}esi\'{c}'s, was provided by J. Calcines \cite{GC} through the sectional category or Schwarz genus. In this context, it is demonstrated that the generalized and standard sectional categories coincide for fibrations between ANR spaces. The broad range of homotopical numerical invariants covered by the sectional category results in the recovery of Srinivasan's outcome but also gives rise to the corresponding results for topological complexity, sequential topological complexity, and homotopic distance, among others.

An interesting variant of topological complexity is the so-called monoidal topological complexity, introduced by N. Iwase and M. Sakai \cite{I-S}. Here, open coverings are utilized, with each covering integrating a local motion planner, and mandating that the output path must always be the constant path when the initial and final points coincide. In this case as well, it has been observed in \cite{GC} that monoidal topological complexity can be characterized using arbitrary coverings, provided ANR spaces are taken into account. Analogous to the situation with topological complexity, this result can be situated within the broader framework of the relative category in the sense of Doeraene and El Haouari.

\section{Preliminaries: Srinivasan's tools and generalized LS category}

In this introductory section, we delve into the tools employed by T. Srinivasan, as they will be instrumental in the subsequent section, where we discuss an extension related to topological complexity. Additionally, we examine how T. Srinivasan successfully reached her objectives with the LS category using these straightforward tools.

\subsection{Absolute neighborhood retracts}

In the early 1930s of the 20th century, Karol Borsuk introduced the concept of a retract \cite{Bor} and also laid the foundation for absolute retract and absolute neighborhood retract \cite{Bor2}. The study of these spaces became so extensive that, by 1967, two notable books had already been dedicated to them: \emph{Theory of Retracts}, by K. Borsuk \cite{Bor3} and \emph{Theory of Retracts} by S.T. Hu \cite{Hu}. In this study, our main focus centers on ANR spaces. To recap, if we have a metrizable space $X,$ it is considered an ANR, or an ANR space, if, for any embedding $X\subset Y$ as a closed subspace in a metrizable space $Y$, $X$ is a neighborhood retract of $Y$; this means that there exists an open neighborhood $U$ of $X$ in $Y$ that can be retracted onto $Y$. An alternative, and perhaps more comprehensive, widely used definition is as follows: A metrizable space $X$ is an ANR if, and only if, it is an absolute neighborhood extensor (ANE). That is, for each closed subset $A$ of a metrizable space $W$ and every continuous map $f : A\rightarrow X,$ there exists an open neighborhood $U$ of $A$ in $W$ along with a continuous map $F : U\rightarrow X,$ extending $f$.

The class of ANR spaces exhibits numerous intriguing properties. Notably, a significant observation is that being an ANR space is a local characteristic for metrizable spaces. In fact, a metrizable topological space that can be covered by open subsets that are ANR spaces is itself an ANR space (see \cite[Chapt. III, Th. 8.1]{Hu} or \cite[Th. 3.3]{Han}). Moreover, it is easy to show that any open subset or retract of an ANR is also an ANR, and that the finite product of ANR spaces remains an ANR space \cite[Chapt. II, Prop. 4.4]{Hu}.

The class of ANR spaces extends far beyond initial assumptions. For instance, it encompasses every finite-dimensional metrizable locally Euclidean topological space, as demonstrated in \cite[Chapt. III, Cor. 8.3]{Hu}. Notably, this includes all topological manifolds. Another illustration of an ANR space is provided by any locally finite CW-complex.
In addition, we have a result that is highly valuable when considering homotopy invariants, such as topological complexity or Lusternik-Schnirelmann category. Specifically (see \cite{Mil}, for instance):

\begin{theorem}
A space $X$ has the homotopy type of a CW-complex if, and only if, it has the homotopy type of an ANR.
\end{theorem}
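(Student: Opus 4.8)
The plan is to prove the two implications separately, in each case reducing to Milnor's theorem that a space dominated by a CW-complex has the homotopy type of a CW-complex (recall that $X$ is \emph{dominated} by $K$ when there are maps $i\colon X\to K$, $r\colon K\to X$ with $r\circ i\simeq\mathrm{id}_X$). With that theorem available, the whole statement reduces to two facts: every ANR is dominated by a CW-complex, and every CW-complex has the homotopy type of a (metrizable) ANR.

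\emph{From an ANR to a CW-complex.} Suppose $X\simeq A$ with $A$ an ANR; it suffices to dominate $A$ by a CW-complex. By the Arens--Eells theorem, $A$ embeds as a closed subspace of a normed linear space $E$. Being convex in itself, $E$ is an absolute retract (Dugundji's extension theorem), hence a metrizable ANR; since $A$ is an ANE, the identity of $A$ extends to a retraction $r\colon U\to A$ for some open neighbourhood $U$ of $A$ in $E$, and $U$ is again an ANR because it is open in the ANR $E$ (as recalled above). Fix a locally finite open cover $\{U_j\}_{j\in J}$ of $U$ by convex subsets of $E$, chosen fine enough that for each point the union of the members containing it lies inside a single member. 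A subordinate partition of unity gives a map $\varphi\colon U\to|N|$ to the geometric realization of the nerve $N$ of the cover; choosing a point in each $U_j$ and extending skeletonwise over $|N|$, using that every finite intersection of the (convex) $U_j$ is convex, produces a map $\psi\colon|N|\to U$; and the fineness of the cover makes $\psi\circ\varphi$ and $\mathrm{id}_U$ always lie in a common $U_j$, so that $\psi\circ\varphi\simeq\mathrm{id}_U$. Composing $A\hookrightarrow U$, $\varphi$, $\psi$ and $r$ exhibits $A$ as dominated by the CW-complex $|N|$, and Milnor's theorem gives $A$, hence $X$, the homotopy type of a CW-complex.

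\emph{From a CW-complex to an ANR.} Suppose $X\simeq Y$ with $Y$ a CW-complex. First, $Y$ is homotopy equivalent to the realization $|K|$, taken with the weak topology, of some simplicial complex $K$ (a standard fact). Now pass to the metric realization $|K|_m$, the same set of barycentric coordinates topologized by the $\ell^1$-metric: it is classical that $|K|_m$ is a metrizable ANR and that the canonical continuous bijection $|K|\to|K|_m$ is a weak homotopy equivalence. Since $|K|$ is a CW-complex and $|K|_m$, being an ANR, has the homotopy type of a CW-complex by the implication already proved, this weak equivalence is in fact a homotopy equivalence (Whitehead's theorem). Hence $X\simeq Y\simeq|K|\simeq|K|_m$, an ANR.

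\emph{Where the difficulty lies.} Apart from Milnor's domination theorem, used as a black box, two technical points carry the weight. In the first implication one must build the backward map $\psi\colon|N|\to U$ so that it is well defined up to homotopy and $\psi\circ\varphi\simeq\mathrm{id}_U$; this is the reason for passing to a neighbourhood in a normed space, where convexity of the cover and of its finite intersections makes the skeletonwise extension and the final homotopy routine. In the second implication the real content is the two classical facts about the metric realization of a simplicial complex --- that $|K|_m$ is an ANR (not merely a metrizable, locally contractible space), and that the change of topology $|K|\to|K|_m$ does not alter the homotopy type --- whose proofs rest on the local characterization of ANRs for metrizable spaces recalled above, applied to the open stars of the vertices.
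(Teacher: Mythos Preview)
Your argument is correct, but note that the paper does not actually prove this theorem: it is stated as a background fact with a reference to Milnor \cite{Mil} and no proof is given in the text. So there is no ``paper's own proof'' to compare your attempt against.

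That said, what you wrote is essentially the standard proof behind the cited result. Two small comments. In the first implication, your fineness condition on the convex cover (``for each point the union of the members containing it lies inside a single member'') is exactly the statement that the cover star-refines itself; you might say explicitly that this is obtained by taking a convex star refinement, available by paracompactness of the metric space $U$. In the second implication, you invoke the first implication to know that $|K|_m$ has CW homotopy type and then apply Whitehead; this is legitimate and not circular, but one can shortcut it by citing directly that the identity $|K|\to|K|_m$ is a homotopy equivalence (Dowker), avoiding the detour through weak equivalences. Either way the conclusion stands.
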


\subsection{Modified Walsh Lemma}

In this subsection, our goal is to introduce essential tools that contribute to the development of T. Srinivasan's theory. One of these crucial tools is the Modified Walsh Lemma. To achieve this, we will begin by exploring various preceding results, some of which are widely recognized. For detailed proofs and more comprehensive information, readers can refer to \cite{Hu}, \cite{Bor3}, and \cite{Sr2}.

To introduce our first result in this section, we require the following definitions. Let $X$ be a space and consider $\mathcal{U}=\{U_{\lambda }\}_{\lambda \in \Lambda }$ a covering of a space $Y$:

\begin{enumerate}
\item[(i)] We define two maps $f,g:X\rightarrow Y$ as being \emph{$\mathcal{U}$-near} if, for any $x\in X$, there exists a $\lambda \in \Lambda $ such that both $f(x)$ and $g(x)$ belong to $U_{\lambda }.$

\item[(ii)] A map $H:X\times I\rightarrow Y$ is considered an \emph{$\mathcal{U}$-homotopy} when, for any $x\in X$, there exists a $\lambda \in \Lambda $ such that $H(\{x\}\times I)$ is contained within $U_{\lambda }.$
\end{enumerate}

\begin{proposition}\cite[Chapt. IV, Th. 1.1]{Hu}
Let $Y$ be an ANR and $\mathcal{U}=\{U_{\lambda }\}_{\lambda \in \Lambda }$ an open cover of $Y$. Then, there exists an open cover $\mathcal{V}=\{V_{\mu }\}_{\mu \in M}$ of $Y$ (which is an refinement of $\mathcal{U}$) such that for any two $\mathcal{V}$-near maps $f,g:X\rightarrow Y$ defined on an arbitrary space $X$, there exists a $\mathcal{U}$-homotopy connecting them.
\end{proposition}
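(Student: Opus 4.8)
The plan is to use that an ANR is an absolute neighbourhood extensor, realise $Y$ inside a normed linear space, transport the jointly continuous straight-line homotopy performed there back to $Y$ along a local retraction; the refining cover $\mathcal{V}$ will simply consist of sufficiently small metric balls intersected with $Y$, so no partition of unity or selection theorem is needed.

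Concretely, I would first embed $Y$ as a closed subspace of a normed linear space $L$ (Arens--Eells embedding). Since $Y$ is an ANR, hence an ANE, applying the extension property to the identity $\mathrm{id}_Y\colon Y\to Y$ (with $Y$ closed in the metrizable space $L$) yields an open neighbourhood $N$ of $Y$ in $L$ together with a retraction $r\colon N\to Y$. For each $y\in Y$ fix $\lambda(y)\in\Lambda$ with $y\in U_{\lambda(y)}$; the set $N\cap r^{-1}(U_{\lambda(y)})$ is open in $L$ and contains $y$, so there is $\varepsilon_y>0$ with $B(y,2\varepsilon_y)\subseteq N\cap r^{-1}(U_{\lambda(y)})$, where $B(y,\rho)$ denotes the open $\rho$-ball of $L$ centred at $y$. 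Put $V_y:=B(y,\varepsilon_y)\cap Y$. Then $\mathcal{V}=\{V_y\}_{y\in Y}$ is an open cover of $Y$, and since $r|_Y=\mathrm{id}$ we have $V_y=r(V_y)\subseteq r\big(B(y,2\varepsilon_y)\big)\subseteq U_{\lambda(y)}$, so $\mathcal{V}$ refines $\mathcal{U}$.

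Finally I would verify the homotopy statement. Let $f,g\colon X\to Y$ be $\mathcal{V}$-near, and for each $x\in X$ choose $y(x)\in Y$ with $f(x),g(x)\in V_{y(x)}\subseteq B(y(x),\varepsilon_{y(x)})$. As balls in $L$ are convex, the segment from $f(x)$ to $g(x)$ lies in $B(y(x),\varepsilon_{y(x)})\subseteq N$, so $K(x,t):=(1-t)f(x)+t\,g(x)$ defines a continuous map $K\colon X\times I\to N$; hence $H:=r\circ K\colon X\times I\to Y$ is continuous with $H(\cdot,0)=r\circ f=f$ and $H(\cdot,1)=g$. For each fixed $x$ one has $H(\{x\}\times I)=r\big([f(x),g(x)]\big)\subseteq r\big(B(y(x),2\varepsilon_{y(x)})\big)\subseteq U_{\lambda(y(x))}$, so $H$ is a $\mathcal{U}$-homotopy, which completes the argument.

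The only genuinely non-formal ingredient is the closed embedding of $Y$ into a normed linear space together with the local retraction $r$, and this is precisely where the ANR hypothesis enters; everything else is elementary metric geometry. It is worth underlining the feature that keeps the proof clean: the assignments $y\mapsto\lambda(y)$ and $x\mapsto y(x)$ need not be continuous, since $H$ is manufactured as the composite of a jointly continuous linear homotopy with $r$, and both the refinement condition and the $\mathcal{U}$-homotopy condition only have to be checked one point at a time.
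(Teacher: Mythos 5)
Your argument is correct and complete: the Arens--Eells closed embedding into a normed linear space, the neighbourhood retraction $r\colon N\to Y$ supplied by the ANR property, the choice of balls $B(y,2\varepsilon_y)\subseteq N\cap r^{-1}(U_{\lambda(y)})$, and the retracted straight-line homotopy $r\circ K$ together give exactly the stated refinement and $\mathcal{U}$-homotopy, with the pointwise (non-continuous) choices $y\mapsto\lambda(y)$ and $x\mapsto y(x)$ used only for verification. The paper does not reproduce a proof but cites Hu, and your argument is essentially the standard one given there, so there is nothing to add.
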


Looking carefully at the proof of the previous result, we can find a particular result that will be very valuable.

\begin{proposition}\label{epsilon}
If $Y$ is a bounded metric ANR, then there exists a (not necessarily continuous) function $\varepsilon :Y\rightarrow (0,+\infty )$ such that the cover formed by the collection  $\mathcal{V}=\{B(y,\varepsilon (y)):y\in Y\}$ of open balls in $Y$ is such that any pair of maps $f,g:X\rightarrow Y$, which are $\mathcal{V}$-near, are homotopic.
\end{proposition}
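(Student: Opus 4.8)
The plan is to extract the desired statement directly from the proof of the preceding Proposition (Hu, Chapt. IV, Th. 1.1), specializing it to the case where $Y$ is a bounded metric ANR and the open cover $\mathcal{U}$ is taken to be the \emph{single-element} cover $\{Y\}$. With this choice, a $\mathcal{U}$-homotopy is simply an ordinary homotopy (any map into $Y$ has image in $Y$), so the conclusion ``$\mathcal{V}$-near maps are $\mathcal{U}$-homotopic'' becomes ``$\mathcal{V}$-near maps are homotopic.'' The only thing that needs to be verified is that the refinement $\mathcal{V}$ produced by the standard argument can be arranged to consist of balls $B(y,\varepsilon(y))$ indexed by the points $y \in Y$, for a suitable (possibly discontinuous) radius function $\varepsilon$.

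First I would recall the mechanism behind Hu's theorem: embed $Y$ as a closed subset of a normed linear space (or a suitable metrizable AR), use the ANE property to get a retraction $r : W \to Y$ defined on an open neighborhood $W$ of $Y$, and then, using continuity of $r$ together with openness of $W$, assign to each point $y \in Y$ a radius $\varepsilon(y) > 0$ small enough that the ball of radius (say) $2\varepsilon(y)$ around $y$ lies inside $W$ and is carried by $r$ into a prescribed small neighborhood of $y$. Taking $\mathcal{V} = \{B(y,\varepsilon(y)) : y \in Y\}$, one checks that if $f,g : X \to Y$ are $\mathcal{V}$-near then for each $x$ the straight-line segment in $W$ from $f(x)$ to $g(x)$ stays inside $W$ (since $f(x),g(x)$ both lie in some common $B(y,\varepsilon(y))$, hence within distance $2\varepsilon(y)$ of $y$, hence inside the ball of radius $2\varepsilon(y)$ about $y$ which is contained in $W$); composing this linear homotopy with the retraction $r$ yields a genuine homotopy $H : X \times I \to Y$ from $f$ to $g$. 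Boundedness of $Y$ is what guarantees the segments live in a bounded region and lets one work inside a convex bounded neighborhood without pathology.

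The main obstacle — really the only subtle point — is the bookkeeping that makes $\varepsilon$ a well-defined function on $Y$ while keeping the radius uniform enough on overlaps: one must choose $\varepsilon(y)$ so that \emph{every} pair of points lying in a common $\mathcal{V}$-set can be joined by a segment inside $W$, which forces the ``radius $2\varepsilon(y)$ ball inside $W$'' condition rather than merely ``radius $\varepsilon(y)$ ball inside $W$.'' This is a routine $\varepsilon$-juggling argument and requires no continuity of $\varepsilon$, which is exactly why the Proposition only asserts $\varepsilon$ to be a (not necessarily continuous) function. I would therefore present the proof as: (1) invoke the ANE embedding and retraction $r : W \to Y$; (2) define $\varepsilon(y)$ pointwise from the local behavior of $r$ near $y$; (3) verify the linear-homotopy-then-retract construction gives a homotopy between any two $\mathcal{V}$-near maps; and remark that this is precisely the content buried inside the proof of the previous proposition, now read with $\mathcal{U} = \{Y\}$.
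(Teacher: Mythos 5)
Your proposal is correct and follows exactly the route the paper takes: the paper offers no independent proof of this proposition but explicitly presents it as what one extracts by ``looking carefully at the proof'' of Hu's Theorem IV.1.1, i.e.\ the embed--retract--linear-homotopy mechanism you describe, read with the trivial cover $\mathcal{U}=\{Y\}$. Your filling in of the details (isometric embedding into a convex subset of a normed space using boundedness, retraction $r:W\to Y$, choice of $\varepsilon(y)$ so the relevant ball lies in $W$, and the straight-line homotopy composed with $r$) is sound and is precisely the intended argument.
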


\begin{remark}\label{compact}
Specifically, if $Y$ is also compact, it is possible to find a positive real number $\varepsilon > 0$ such that for any pair of maps $f, g: X \rightarrow Y$ that are $\varepsilon$-close (i.e., $d(f(x), g(x)) < \varepsilon$ for all $x \in X$), they are homotopic (compare with \cite[Th. 2.4]{Sr} or \cite{Bor3}).
\end{remark}

\begin{remark}
Note that for any metrizable space $Y$, one can choose a bounded metric that induces the space's topology. When $Y$ is compact, all metrics are necessarily bounded.
\end{remark}

\medskip
Among the tools employed by T. Srinivasan in her work, the most significant is arguably the \emph{Modified Walsh Lemma}, which is a variation of a lemma found in \cite{W}.
You can find the proof presented in T. Srinivasan's work \cite{Sr2}.

\begin{theorem}\cite[Th. 2.3]{Sr2}\label{walsh}
Let $X$ be a metric space, $A\subset X$, $K$ a bounded metric ANR and $f:A\rightarrow K$ a continuous map. Then, for any function $\varepsilon :K\rightarrow (0,+\infty )$ (not necessarily continuous) there exists an open neighborhood $U\supset A$ in $X$ and a continuous map $g:U\rightarrow K$ such that:
\begin{enumerate}
\item For every $u\in U$ there exists $a_u\in A$ such that $d(g(u),f(a_u))<\varepsilon (f(a_u))$.

\item $g_{|A}\simeq f.$
\end{enumerate}
\end{theorem}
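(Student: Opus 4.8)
The plan is to build $g$ by a partition‑of‑unity averaging inside a linear model of $K$, invoking Proposition \ref{epsilon} only at the very end to pass from ``close'' to ``homotopic''; a genuine continuous extension of $f$ over a neighbourhood of $A$ need not exist, precisely because $A$ need not be closed in $X$. First, since $K$ is a metrizable ANR, it embeds as a closed subspace of a convex subset $C$ of a normed space (Wojdys\l awski's embedding theorem) and, being an ANE, it is a retract $r\colon V\to K$ of an open neighbourhood $V$ of $K$ in $C$ (see \cite{Hu}); from now on all balls $B_C(\,\cdot\,,\,\cdot\,)$ are taken relative to $C$, so that they are convex. Second, using that $K$ is a \emph{bounded} metric ANR, fix via Proposition \ref{epsilon} a function $\delta\colon K\to(0,+\infty)$ such that any two maps into $K$ that are $\{B(y,\delta(y))\}_{y\in K}$-near are homotopic.

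The next step is the calibration, and it is here that the discontinuity of $\varepsilon$ is neutralised. For each $y\in K$ choose $\tau(y)>0$ so small that $B_C(y,\tau(y))\subseteq V$, that $r\big(B_C(y,\tau(y))\big)\subseteq B_K\big(y,\min\{\varepsilon(y),\delta(y)\}\big)$ (possible since $r$ is continuous with $r(y)=y$ and $V$ is open), and that $\tau(y)\le\min\{\varepsilon(y),\delta(y)\}$. Then, using only the continuity of $f$, choose for each $a\in A$ a radius $\rho(a)>0$ with $f\big(B_X(a,\rho(a))\cap A\big)\subseteq B_K\big(f(a),\tau(f(a))\big)$. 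Note that $\varepsilon$ is thereby consulted only at points of $f(A)$ and only to produce the pointwise values $\tau(y)$; no limiting behaviour of $\varepsilon$ is ever used.

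Now set $U:=\bigcup_{a\in A}B_X\big(a,\tfrac12\rho(a)\big)$, an open neighbourhood of $A$ in $X$. Since $U$ is metrizable, hence paracompact, the cover $\{B_X(a,\tfrac12\rho(a))\}_{a\in A}$ admits a locally finite open refinement $\{W_j\}_{j\in J}$ with a subordinate partition of unity $\{\phi_j\}$; for each $j$ fix $a(j)\in A$ with $W_j\subseteq B_X\big(a(j),\tfrac12\rho(a(j))\big)$ and put $y_j:=f(a(j))\in K\subseteq C$. Define $h\colon U\to C$ by $h(u):=\sum_j\phi_j(u)y_j$, a locally finite convex combination, hence continuous and valued in $C$. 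Given $u\in U$, let $j_0,\dots,j_k$ be the indices with $\phi_{j_i}(u)>0$, so $u\in\bigcap_iW_{j_i}$, and let $i_0$ maximise $\rho(a(j_{i_0}))$; then $d_X(a(j_i),a(j_{i_0}))<\tfrac12\rho(a(j_i))+\tfrac12\rho(a(j_{i_0}))\le\rho(a(j_{i_0}))$, so $a(j_i)\in B_X(a(j_{i_0}),\rho(a(j_{i_0})))\cap A$ and hence $y_{j_i}\in B_C(y_{i_0},\tau(y_{i_0}))$ for every $i$. By convexity of that ball, $h(u)\in B_C(y_{i_0},\tau(y_{i_0}))\subseteq V$, so $g:=r\circ h\colon U\to K$ is well defined and continuous, and $g(u)\in B_K(y_{i_0},\varepsilon(y_{i_0}))$; taking $a_u:=a(j_{i_0})$ yields the first assertion. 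If moreover $u=a\in A$, then $a\in W_{j_{i_0}}\subseteq B_X(a(j_{i_0}),\rho(a(j_{i_0})))$, so $f(a)\in B_K(y_{i_0},\tau(y_{i_0}))\subseteq B_K(y_{i_0},\delta(y_{i_0}))$, while $g(a)\in B_K(y_{i_0},\delta(y_{i_0}))$ as above; thus $g|_A$ and $f$ are $\{B(y,\delta(y))\}$-near, and Proposition \ref{epsilon} gives $g|_A\simeq f$, the second assertion.

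The only genuinely delicate points are two, and both appear above: forcing the discontinuous $\varepsilon$ to enter the argument solely through its values on $f(A)$ (solved by calibrating $\tau$ on $K$ \emph{before} choosing the radii $\rho(a)$), and making the overlap combinatorics of the $W_{j_0}\cap\dots\cap W_{j_k}$ cooperate (solved by the ``largest-$\rho$ centre'' choice of $i_0$ together with the convexity of metric balls in a normed space). The latter device is exactly what a nerve‑realization lemma for ANRs supplies in Walsh's original argument \cite{W}; the present linear‑model route bypasses it. Everything else is routine.
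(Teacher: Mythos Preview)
The paper does not supply its own proof of this theorem; immediately after the statement it simply refers the reader to Srinivasan \cite[Th.~2.3]{Sr2}. Your argument is in the same spirit as the Walsh--Srinivasan proof (linear embedding of $K$, partition-of-unity averaging in the ambient convex set, retraction back to $K$, and a final appeal to Proposition~\ref{epsilon}), and it is essentially correct.

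One point, however, must be made explicit for the argument to close. You distinguish balls $B_K$ (taken in the given bounded metric $d$ on $K$) from balls $B_C$ (taken in the norm of the ambient space). In your calibration you arrange $f\big(B_X(a,\rho(a))\cap A\big)\subseteq B_K\big(f(a),\tau(f(a))\big)$, but the convex-combination step that follows needs the vertices $y_{j_i}$ to lie in the \emph{convex} ball $B_C\big(y_{j_{i_0}},\tau(y_{j_{i_0}})\big)$. That inference is valid only if $B_K(y,r)\subseteq B_C(y,r)$, i.e., only if the embedding $K\hookrightarrow C$ is \emph{isometric}. The standard Kuratowski--Wojdys\l awski construction is isometric---send $x\mapsto d(x,\cdot)\in\ell^\infty(K)$ and take $C$ to be the convex hull of the image, in which the image is closed---so no harm is done, but this should be stated; a merely topological Wojdys\l awski-type embedding would break the step. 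With that clarified, the ``largest-$\rho$ centre'' trick handles the overlap combinatorics cleanly, the retraction $r$ pulls $h(u)\in B_C\big(y_{j_{i_0}},\tau(y_{j_{i_0}})\big)$ back into $B_K\big(y_{j_{i_0}},\min\{\varepsilon(y_{j_{i_0}}),\delta(y_{j_{i_0}})\}\big)$, and Proposition~\ref{epsilon} finishes part~(2). (Cosmetic: several occurrences of $y_{i_0}$ should read $y_{j_{i_0}}$.)
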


\begin{remark}\label{coment}
As can be verified in the details of the theorem proof, one can even choose $a_u$ in such a way that the distance $d(u,a_u)$ can be made as small as required.
\end{remark}

\begin{remark}
An earlier version of the theorem can be found in \cite[Theorem 2.3]{Sr}, which was specifically tailored for the compact case. Under quite similar conditions as stated in the theorem, for any $\varepsilon > 0$, it is possible to discover an open subset $U$ that contains $A$ in the space $X$ and a map $g: U \rightarrow K$ such that the the image $g(U)$ lies within an $\varepsilon$-neighborhood of $f(A)$. Additionally, the restriction of $g$ to $A$ is homotopy equivalent to $f$.
\end{remark}

We would like to conclude this section by illustrating how T. Srinivasan used all these tools for the Lusternik-Schnirelmann category and its generalized version.

\begin{definition}
Given a map $f:X\rightarrow Y$, the generalized Lusternik-Schnirelmann category of $f$ (or just LS-category of $f$), denoted as $\cat _g(f)$, is the smallest non-negative integer $n$ for which $X$ can be covered by $n+1$ subsets
$X=A_0\cup A_1\cup \cdots A_n$ where each restriction $f_{|A_i}:A_i\rightarrow Y$ is nullhomotopic. If such integer does not exist, then we set $\cat _g(f)=\infty $.
\end{definition}

 Taking open covers in the definition above we obtain the standard notion of $\cat (f).$  The generalized LS-category of a space $X$ is then defined as
$$\cat _g(X):=\cat _g(id_X)$$

If $f:X\rightarrow Y$ is a map where $Y$ is path-connected, and we fix a base point $y_0\in Y$ it is a well-known fact that $f$ is nullhomotopic if and only if there exists a factorization of $f$ of the form
$$\xymatrix{
{X} \ar[rr]^f \ar[dr]_F & & {Y} \\
 & {PY} \ar[ur]_p &  }$$
Here, $PY=\{\alpha \in Y^I:\alpha (0)=y_0\}$ as a subspace of $Y^I$ (with the compact-open topology) and $p:PY\rightarrow Y$ is the path fibration, given as $p(\alpha ):=\alpha (1).$

We also observe that, if $Y$ is a metric space, then so is $Y^I$ (and $PY$) considering the metric $d^*$ of the supremum, which induces the compact-open topology. More is true, if $Y$ is an ANR space, then $Y^I$ and $PY$ are ANR spaces (see \cite[Chapt. VI, Th. 2.4]{Hu} and \cite[Chapt. VI, Th. 3.1]{Hu}).

\begin{lemma}
Let $X$ be a metric space, $A\subset X$, $K$ a connected ANR and $f:X\rightarrow K$ a map such that the restriction $f_{|A}:A\rightarrow K$ is nullhomotopic. Then there exists an open neighbourhood $U\supset A$ in $X$ such that $f_{|U}:U\rightarrow K$ is nullhomotopic.
\end{lemma}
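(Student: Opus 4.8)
The plan is to reduce the statement to the Modified Walsh Lemma (Theorem~\ref{walsh}) applied to the path fibration. Since $f_{|A}$ is nullhomotopic and $K$ is path-connected, fixing a base point $k_0\in K$ gives a lift $F:A\rightarrow PK$ with $p\circ F \simeq f_{|A}$ (and in fact, after a homotopy, $p\circ F = f_{|A}$, using that nullhomotopy is equivalent to a factorization through $PK$ as recalled in the excerpt). The key point is that $PK$ is a bounded metric ANR: by the remark preceding the lemma, $K^I$ and $PK$ are ANR whenever $K$ is, and we may replace the metric on $K$ by a bounded one (inducing the same topology), so that the supremum metric $d^*$ on $PK$ is bounded as well.

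Next I would invoke Proposition~\ref{epsilon} applied to $Y = PK$: there is a function $\varepsilon: PK \rightarrow (0,+\infty)$ such that any two $\mathcal V$-near maps into $PK$ (for the cover by balls $B(\alpha,\varepsilon(\alpha))$) are homotopic. Feeding this $\varepsilon$ and the map $F:A\rightarrow PK$ into Theorem~\ref{walsh}, I obtain an open neighborhood $U\supset A$ in $X$ and a continuous map $g:U\rightarrow PK$ with the property that for every $u\in U$ there is $a_u\in A$ with $d^*(g(u), F(a_u)) < \varepsilon(F(a_u))$, and moreover $g_{|A}\simeq F$.

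Now I would argue that $p\circ g:U\rightarrow K$ is the desired nullhomotopic extension-type map. It is nullhomotopic because it factors through the contractible space $PK$ via $p$. It remains only to check that $p\circ g$ is homotopic to $f_{|U}$ — but actually the lemma only asks that $f_{|U}$ itself be nullhomotopic, so I must relate $p\circ g$ to $f_{|U}$. Here is where the near-ness condition enters: one shows $p\circ g$ and $f_{|U}$ are $\mathcal U$-near (or close enough), using that $d^*(g(u),F(a_u))$ is small, that $p$ is $1$-Lipschitz for $d^*$, and that $f$ is continuous on $X$ with $d(u,a_u)$ controllable (Remark~\ref{coment}); then Proposition~\ref{epsilon}, or rather its analogue for $K$ itself, gives $p\circ g \simeq f_{|U}$, whence $f_{|U}$ is nullhomotopic. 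A cleaner route, which I would prefer, is to not even pass through $f_{|U}$: simply note $f_{|U}\simeq p\circ g$ is not needed — the conclusion ``$f_{|U}$ nullhomotopic'' follows once we know $f_{|U}$ is homotopic to something that factors through $PK$.

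The main obstacle is the last step: controlling the homotopy between $p\circ g$ and $f_{|U}$ on the whole neighborhood $U$, not just on $A$. On $A$ we have $g_{|A}\simeq F$ hence $p\circ g_{|A}\simeq p\circ F \simeq f_{|A}$, but extending this control to $U$ requires the quantitative ``Walsh'' estimate together with the homotopy-from-nearness principle of Proposition~\ref{epsilon} applied in $K$ (not $PK$). One must be slightly careful that the $\varepsilon$ chosen for $PK$ is compatible with — or can be taken finer than — the $\varepsilon$ needed for $K$; since $p$ does not increase $d^*$-distances, a ball of radius $\varepsilon(\alpha)$ in $PK$ maps into a ball of radius $\varepsilon(\alpha)$ in $K$, so shrinking $\varepsilon$ at the outset to lie below the threshold function given by Proposition~\ref{epsilon} for $K$ resolves this. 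Everything else is a routine assembly of the cited tools.
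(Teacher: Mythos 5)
Your proposal is correct and follows essentially the same route as the paper: factor $f_{|A}$ through the bounded metric ANR $PK$, apply the Modified Walsh Lemma there, compose with evaluation at $1$, and use Proposition~\ref{epsilon} for $K$ together with the continuity of $f$ and the control on $d(u,a_u)$ from Remark~\ref{coment} to conclude $f_{|U}\simeq p\circ g$. The only cosmetic difference is that the paper skips your detour through Proposition~\ref{epsilon} for $PK$ and feeds Walsh directly the pulled-back threshold $\overline{\varepsilon}(\alpha):=\varepsilon(\alpha(1))$, which is exactly the shrinking you describe at the end.
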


\begin{proof}
If we consider a bounded metric on $K$, this ensures that the metric of the supremum, denoted as $d^*$ in $PK$, remains bounded as well. For simplicity, let us denote the metrics in both $X$ and $K$ as $d$.
Given that $K$ is a bounded metric ANR, there exists a function $\varepsilon : K \rightarrow (0,+\infty)$ that satisfies the conditions outlined in Proposition \ref{epsilon}.
Leveraging the continuity of $f$, for each $a \in A$, we can select $\delta_a > 0$ such that if $d(x,a) < \delta_a$, it implies $d(f(x),f(a)) < \varepsilon(f(a))$.
Now, considering that $f_{|A}$ is nullhomotopic, we define a map $F: A \rightarrow PK$ such that $p \circ F = f_{|A}$. We can apply Theorem \ref{walsh} with $\overline{\varepsilon} : PK \rightarrow (0,+\infty)$ defined as $\overline{\varepsilon}(\alpha) := \varepsilon(\alpha(1))$.
This enables us to find an open neighborhood $U$ that contains $A$ in the space $X$, and a map $G: U \rightarrow PK$ satisfying that for any $u \in U$, there exists an associated $a_u \in A$ such that $d^*(G(u), F(a_u)) < \overline{\varepsilon}(F(a_u))=\varepsilon (f(a_u))$. Furthermore, we can choose $a_u$ in such a way that $d(u, a_u) < \delta_{a_u}.$

Now, let us define a map $g: U \rightarrow K$ as $g(u) := G(u)(1)$. Then, we can establish the following inequality:
$$d(g(u),f(a_u))=d(G(u)(1),F(a_u)(1))\leq d^*(G(u),F(a_u))<\varepsilon (f(a_u))$$
Furthermore, since $d(u, a_u) < \delta_{a_u}$, we also have that $d(f(u), f(a_u)) < \varepsilon(f(a_u))$. In other words, both $f(u)$ and $g(u)$ fall within the open ball $B(f(a_u), \varepsilon(f(a_u))$. According to Proposition \ref{epsilon}, this implies that $f_{|U} \simeq g$. As $g$ is obviously nullhomotopic, it follows that $f_{|U}$ is also nullhomotopic.
\end{proof}

As an immediate consequence of the lemma we obtain this beautiful result by T. Srinivasan:

\begin{theorem}
Consider a map $f: X \rightarrow Y$, where $X$ is a metrizable space and $Y$ is a connected ANR space. Then, it holds that $\cat_g(f) = \cat(f).$ In particular, when $X$ is a connected ANR space, $\cat_g(X) = \cat(X).$
\end{theorem}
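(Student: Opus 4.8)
The plan is to derive the theorem directly from the preceding lemma, handling the inequality $\cat_g(f)\le\cat(f)$ trivially and concentrating on the reverse inequality $\cat(f)\le\cat_g(f)$. First I would dispose of the easy direction: every open cover witnessing $\cat(f)=n$ is in particular a cover by $n+1$ arbitrary subsets on each of which $f$ restricts to a nullhomotopic map, so $\cat_g(f)\le\cat(f)$ always, with no hypothesis on $X$ or $Y$. If $\cat_g(f)=\infty$ there is nothing to prove, so assume $\cat_g(f)=n<\infty$ and fix a cover $X=A_0\cup A_1\cup\cdots\cup A_n$ with each $f_{|A_i}\colon A_i\to Y$ nullhomotopic.

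The core step is to thicken each $A_i$ to an open set $U_i\supseteq A_i$ with $f_{|U_i}$ still nullhomotopic. Here is where I would invoke the lemma: $X$ is metrizable (fix a metric), and although $Y$ is only assumed to be an ANR rather than a \emph{bounded metric} ANR, Proposition~\ref{epsilon} and the lemma only require a bounded metric on the target, and by the remark following Proposition~\ref{epsilon} every metrizable space admits a bounded metric inducing its topology — so I would first replace the metric on $Y$ by an equivalent bounded one. Since $Y$ is a connected ANR, the lemma applies to each pair $(A_i, f)$ (with $K=Y$, $A=A_i$), yielding an open neighborhood $U_i\supseteq A_i$ in $X$ with $f_{|U_i}$ nullhomotopic. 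Then $X=U_0\cup U_1\cup\cdots\cup U_n$ is an open cover of $X$ by $n+1$ sets each of which is ``categorical'' for $f$, whence $\cat(f)\le n=\cat_g(f)$. Combining the two inequalities gives $\cat(f)=\cat_g(f)$.

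For the ``in particular'' clause, when $X$ is itself a connected ANR we apply the result to $f=id_X\colon X\to X$: a connected ANR is in particular metrizable, and it serves as the connected ANR target, so the hypotheses are met and $\cat_g(X)=\cat_g(id_X)=\cat(id_X)=\cat(X)$.

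The only subtlety — and the one point I would be careful about — is the passage from ``$Y$ is an ANR'' in the theorem to ``$K$ is a bounded metric ANR'' required by the lemma and by Proposition~\ref{epsilon}: one must remember that rescaling the metric to be bounded does not change the topology, hence does not change $\mathrm{ANR}$-ness nor the notion of nullhomotopy, nor the open sets $U_i$ produced. Everything else is bookkeeping: checking that finite unions of open categorical sets assemble into an open categorical cover of the right cardinality, and that the lemma may be applied separately to each of the finitely many pieces $A_i$.
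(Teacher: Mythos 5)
Your proposal is correct and follows exactly the route the paper intends: the theorem is deduced from the preceding lemma by thickening each piece $A_i$ of a generalized categorical cover to an open neighborhood $U_i$ on which $f$ is still nullhomotopic, the easy inequality being immediate. Your extra care about replacing the metric on $Y$ by a bounded equivalent one is already absorbed into the lemma's own proof (which begins by choosing such a metric), so nothing further is needed.
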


\section{Generalized topological complexity}

As we have just discussed earlier, the generalized Lusternik-Schnirelmann category for a space $X$, $\cat_g(X)$, coincides with $\cat(X)$ when $X$ is an ANR. In this section, we will examine how this result can be extended to topological complexity. The concept of generalized topological complexity is defined logically by considering coverings with arbitrary, not necessarily open, subsets:

\begin{definition}\label{Gtc} The \emph{generalized topological complexity} of a space $X$, denoted by $\tc _g(X)$, is the least nonnegative integer $n$ (or infinity if such integer does not exist) such that $X\times X$
admits a cover by $n+1$ subsets $A_0,...,A_n$ on each of which there exists a continuous local section of the bi-evaluation path fibration
$\pi _X:X^I\rightarrow X\times X$, $\pi _X(\alpha )=(\alpha (0),\alpha (1)).$
\end{definition}

It is quite evident that $\tc _g(X) \leq \tc (X)$. In this section, we will explore conditions on a space $X$ that are sufficient to establish $\tc_g(X) = \tc(X).$
There have been two different approaches to address this result. The first mathematician to achieve positive results in this direction was P. Pa\v{v}esi\'{c}.

\subsection{P. Pa\v{v}esi\'{c}'s approach through topological complexity of a map}

In \cite{Pav}, P. Pa\v{v}esi\'{c} developed the notion of topological complexity of a map $f:X\rightarrow Y$, an intriguing invariant that generalizes both topological complexity and the Lusternik-Schnirelmann category. This concept was first introduced in \cite{Pav2}, where it served as a measure to quantify the manipulation complexity of a robotic device. In this context, $X$ and $Y$ represented the configuration space and working space, respectively, of a mechanical system, such as a robot arm. The map $f:X\rightarrow Y$ was interpreted as the forward kinematic map of the system. Subsequently, this theory was expanded upon in \cite{Pav}. It was in this context that the first results establishing the equality $\tc_g(X) = \tc(X)$ came to light. The only constraint is that Pa\v{v}esi\'{c} considers compactness among ANR spaces, possibly influenced by T. Srinivasan's initial work \cite{Sr}, where she also considered compact ANR spaces. Due to its significance and its connection with the topological complexity of a map, we dedicate this subsection to these early stages of generalized topological complexity.

Consider a continuous surjection $f: X \rightarrow Y$. The sectional number $\sn (f)$, is defined as the smallest integer $n$ for which there exists a sequence of nested open subsets:
$$\emptyset \subset U_0\subset U_1\subset \cdots \subset U_{n+1}=Y$$
\noindent such that each difference $U_i \setminus U_{i-1},$ $i\in \{1, \ldots, n+1\}$ permits a local section of $f$. If there is no such integer $n$, then we set $\sn (f) = \infty$.

It is relevant to point out that this definition deviates from the standard notion of sectional number, which is typically defined as one less than the minimum number of elements in an open cover of $Y$ where each element allows for a local section of $f$. We denote this alternative quantity as $\sn_{op}(f)$. Clearly, $\sn(f) \leq \sn_{op}(f)$. However, it is not difficult to check that when $f$ is a fibration and $Y$ is an ANR space, we have $\sn(f) = \sn_{op}(f)$.

It is also interesting to draw a parallel between $\sn_{op}(f)$ and $\secat(f)$, the sectional category of $f,$ also known as the Schwarz genus of $f$. First, we recall the definition of sectional category of a map:

\begin{definition} Let $f:X\rightarrow Y$ be a map. Then, the sectional category of $f,$ denoted by $\secat(f)$, is the smallest non-negative integer $n$ (or infinity), for which $Y$ can be covered by $n+1$ open subsets, and on each of these open subsets, there exists a local homotopy section of $f$.
\end{definition}

The term 'sectional category' was initially introduced by Schwarz in \cite{Sch}, originally referred to as 'genus,' and later renamed by James in \cite{J}. The sectional category is a variant of Lusternik-Schnirelmann category and it serves as a generalization. Specifically, when $X$ is a contractible space, $\secat(f)$ coincides with $\cat(Y)$.
Notably, $\secat(f)=\sn _{op}(f)$ if $f$ is a fibration. However, it is essential to be aware that in general, $\sn(f)$ can be significantly larger than $\secat(f)$ (see \cite[Section 5]{Pav}) for examples).

Given $f:X\rightarrow Y$ a continuous surjection between path-connected spaces one can consider the map
$$\pi _f:X^I\rightarrow X\times Y,\hspace{10pt}\alpha \mapsto (\alpha (0),f(\alpha (1)))$$

\begin{definition}
Let $f:X\rightarrow Y$ be a continuous surjection between path-connected spaces. The topological complexity of $f$, denoted by $\tc (f)$, is defined as
$\tc (f):=\sn (\pi _f:X^I\rightarrow X\times Y)$.
\end{definition}

Clearly, $\tc (X)=\tc (id_X)$, for any ANR space $X.$ Moreover, by \cite[Cor. 4.8]{Pav} we also have the identity $\cat (X)=\tc (p:PX\rightarrow X).$
Considering that, as clarified in \cite[Lem. 4.1]{Pav}, $f$ is a fibration if and only if $\pi_f$ is a fibration, we can deduce that
$$\tc (f)=\secat (\pi _f:X^I\rightarrow X\times Y)$$
\noindent whenever $f:X\rightarrow Y$ is a fibration between ANR spaces.

The subsequent outcome concerning a fibration $f$ offers a highly valuable characterization of the circumstances in which a subset $A\subset X\times Y$ allows for a local section of $\pi _f$. We will denote the canonical projections as $pr_1:X\times Y\rightarrow X$ and $pr_2:X\times Y\rightarrow Y.$

\begin{lemma}\label{caracter} \cite[Prop. 4.3]{Pav}
Let $f:X\rightarrow Y$ be a fibration and $A\subset X\times Y$ a subset. Then the following statements are equivalent:
\begin{enumerate}
\item[(i)] There is a local section $s:A\rightarrow X^I$ of $\pi _f$.

\item[(ii)] The maps $(f\circ pr_1)_{|A}, (pr_2)_{|A}:A\rightarrow Y$ are homotopic.
\end{enumerate}
\end{lemma}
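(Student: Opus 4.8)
The plan is to prove the equivalence directly by unwinding the definitions of $\pi_f$, using the fact that $f$ is a fibration to move between honest sections and homotopy sections. For the implication (i) $\Rightarrow$ (ii), suppose $s:A\to X^I$ is a local section of $\pi_f$, so that $\pi_f(s(a)) = ((s(a))(0), f((s(a))(1))) = a$ for all $a\in A$. Writing $a = (a_1, a_2)$ with $a_1\in X$, $a_2\in Y$, this says $(s(a))(0) = a_1 = pr_1(a)$ and $f((s(a))(1)) = a_2 = pr_2(a)$. The assignment $a\mapsto (s(a))(1)$ is a continuous map $A\to X$, and composing the path $s(a)$ with $f$ gives a continuous homotopy $H:A\times I\to Y$, $H(a,t) = f((s(a))(t))$, which at $t=0$ equals $f(a_1) = (f\circ pr_1)_{|A}(a)$ and at $t=1$ equals $f((s(a))(1)) = a_2 = (pr_2)_{|A}(a)$. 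Hence $(f\circ pr_1)_{|A}\simeq (pr_2)_{|A}$. Notice this direction does not even use that $f$ is a fibration.

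For (ii) $\Rightarrow$ (i), this is where the fibration hypothesis enters. Let $H:A\times I\to Y$ be a homotopy with $H_0 = (f\circ pr_1)_{|A}$ and $H_1 = (pr_2)_{|A}$. Consider the map $pr_1|_A: A\to X$ as a lift of $H_0 = f\circ (pr_1|_A)$ along $f$. Since $f$ is a fibration, the homotopy lifting property applied to $H$ (with respect to the test space $A$) yields a homotopy $\widetilde{H}:A\times I\to X$ with $\widetilde{H}_0 = pr_1|_A$ and $f\circ \widetilde{H} = H$. Now define $s:A\to X^I$ by $s(a)(t) := \widetilde{H}(a,t)$; continuity is the standard adjunction between $X^I$ with the compact-open topology and maps out of $A\times I$. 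Then $s(a)(0) = \widetilde{H}(a,0) = pr_1(a)$ and $f(s(a)(1)) = f(\widetilde{H}(a,1)) = H_1(a) = pr_2(a)$, so $\pi_f(s(a)) = (pr_1(a), pr_2(a)) = a$, i.e. $s$ is a local section of $\pi_f$ over $A$.

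The main obstacle, such as it is, is purely a bookkeeping matter: one must be careful that the homotopy lifting property of $f$ is being invoked for the \emph{arbitrary} space $A$ as the base of the test diagram, which is legitimate since $f$ is a genuine (Hurewicz) fibration and $A$ carries the subspace topology from $X\times Y$; no local-triviality or paracompactness assumption on $A$ is needed. One should also confirm the exponential-adjunction step, namely that a map $A\times I\to X$ corresponds to a continuous map $A\to X^I$ into the compact-open topology — this is standard since $I$ is locally compact Hausdorff. With those two points noted, the proof is complete; the argument is essentially the observation that a section of $\pi_f$ over $A$ is exactly the adjoint of a lift through $f$ of the straight homotopy interpolating the two maps in (ii).
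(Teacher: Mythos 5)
Your proof is correct and is exactly the standard argument (the one given in Pa\v{v}esi\'{c}'s cited Prop.~4.3, which this survey does not reproduce): composing a section with $f$ and evaluating along $I$ gives the homotopy, and conversely the homotopy lifting property of $f$ applied to the test space $A$ with initial lift $pr_1|_A$ produces the adjoint of a section. Your two bookkeeping remarks --- that the HLP holds for arbitrary test spaces since $f$ is a Hurewicz fibration, and that the exponential adjunction is valid because $I$ is locally compact Hausdorff --- are precisely the points that need noting, so nothing is missing.
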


\begin{proposition}\label{import}
Let $f,g:X\rightarrow Y$ maps between compact metric ANR spaces and $A\subset X$ a subset. If $f_{|A}\simeq g_{|A},$ then there exists an open subset $U$ containing $A$ such that $f_{|U}\simeq g_{|U}.$
\end{proposition}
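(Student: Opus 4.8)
The plan is to follow almost verbatim the proof of the lemma on $\cat_g$ above, applying the Modified Walsh Lemma to the \emph{free} path space $Y^I$ instead of to a based path space and exploiting the compactness of $Y$ through Remark~\ref{compact}. To set things up I would equip $Y$ with a bounded metric $d$ (possible since $Y$ is compact), so that the supremum metric $d^*$ makes $Y^I$ a bounded metric ANR: indeed $Y^I$ is an ANR because $Y$ is, and $d^*$ induces its compact-open topology. By Remark~\ref{compact} applied to $Y$ there is a single $\eta>0$ such that any two maps into $Y$ that are $\eta$-close are homotopic. Since $X$ is compact, $f$ and $g$ are uniformly continuous, so I may fix $\delta>0$ such that $d(x,x')<\delta$ implies both $d(f(x),f(x'))<\eta/2$ and $d(g(x),g(x'))<\eta/2$.

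Next, the assumption $f_{|A}\simeq g_{|A}$ provides a homotopy $H\colon A\times I\to Y$, which by the exponential law is the same as a continuous map $\widehat H\colon A\to Y^I$ satisfying $\widehat H(a)(0)=f(a)$ and $\widehat H(a)(1)=g(a)$ for all $a\in A$. I would then apply Theorem~\ref{walsh} to the metric space $X$, the subset $A$, the bounded metric ANR $K=Y^I$, the map $\widehat H$, and the constant function $\varepsilon\equiv\eta/2\colon Y^I\to(0,+\infty)$; combining it with Remark~\ref{coment}, this produces an open neighbourhood $U\supset A$ in $X$ together with a continuous map $G\colon U\to Y^I$ such that for each $u\in U$ there is $a_u\in A$ with $d^*(G(u),\widehat H(a_u))<\eta/2$ and $d(u,a_u)<\delta$.

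Finally, set $g_0(u):=G(u)(0)$ and $g_1(u):=G(u)(1)$; these are continuous maps $U\to Y$, and $(u,t)\mapsto G(u)(t)$ is a homotopy $g_0\simeq g_1$ on $U$. For every $u\in U$ one has
$$d(f(u),g_0(u))\le d(f(u),f(a_u))+d(\widehat H(a_u)(0),G(u)(0))<\eta/2+\eta/2=\eta,$$
using $\widehat H(a_u)(0)=f(a_u)$ and $d(u,a_u)<\delta$, and symmetrically $d(g(u),g_1(u))<\eta$ using $\widehat H(a_u)(1)=g(a_u)$. By the choice of $\eta$ this yields $f_{|U}\simeq g_0$ and $g_{|U}\simeq g_1$, whence $f_{|U}\simeq g_0\simeq g_1\simeq g_{|U}$, which is the assertion.

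The one point that requires insight is the choice of ambient space for the Modified Walsh Lemma: one must recognize that the bi-evaluation path fibration $Y^I\to Y\times Y$ makes $Y^I$ the natural ANR in which to ``spread'' the homotopy $H$ away from $A$, and then check that $Y^I$ really is a bounded metric ANR — this is where both the compactness and the ANR property of $Y$ are used. Everything else is the routine metric bookkeeping already displayed in the proof of the $\cat_g$ lemma; in particular, if one preferred, one could replace the constant $\eta$ by a Srinivasan-type function $\varepsilon\colon Y\to(0,+\infty)$ as in Proposition~\ref{epsilon} and run the same argument, at the cost of slightly heavier notation.
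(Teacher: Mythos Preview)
Your proof is correct and follows essentially the same route as the paper's: both pass to the adjoint $\widehat H\colon A\to Y^I$, apply the Modified Walsh Lemma (with Remark~\ref{coment}) to extend it to a map $G\colon U\to Y^I$, and then use the compact-ANR constant from Remark~\ref{compact} together with uniform continuity of $f$ and $g$ to show via the triangle inequality that the endpoint maps $G(\cdot)(0)$ and $G(\cdot)(1)$ are respectively close enough to $f_{|U}$ and $g_{|U}$ to be homotopic. The only cosmetic differences are notation ($\eta$ versus $\varepsilon$, $g_0,g_1$ versus $G_0,G_1$) and your explicit remark that one could alternatively use a Srinivasan-type function in place of the constant.
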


\begin{proof}
For simplicity, let us denote the metrics in both $X$ and $Y$ as $d$, and the metric of the supremum in $Y^I$ as $d^*$. In accordance with Remark \ref{compact}, let us choose $\varepsilon > 0$ such that any pair of maps mapping to $Y$ that are $\varepsilon$-close are homotopic. Since $X$ is compact, both $f$ and $g$ are uniformly continuous, implying the existence of $\delta > 0$ such that $d(x, x') < \delta$ leads to $d(f(x), f(x')) < \frac{\varepsilon}{2}$ and $d(g(x), g(x')) < \frac{\varepsilon}{2}$.

Now, consider $F: A \times I \rightarrow Y$, a homotopy between $f_{|A}$ and $g_{|A}$, and $\hat{F}: A \rightarrow Y^I$, its corresponding adjoint map. By applying Theorem \ref{walsh} and Remark \ref{coment}, one can select an open neighborhood $U\supset A$ and a map $G: U \rightarrow Y^I$ in such a way that for all $u \in U$, there exists $a_u \in A$ satisfying $d(u, a_u) < \delta$ and $d^*(G(u), \hat{F}(a_u)) < \frac{\varepsilon}{2}$ (i.e., $d(G(u)(t), \hat{F}(a_u)(t)) < \frac{\varepsilon}{2}$ for all $t \in I).$
Define $G_0$ and $G_1: U \rightarrow Y$ as $G_0(u) := G(u)(0)$ and $G_1(u) := G(u)(1)$. By observing that $\hat{F}(a_u)(0) = F(a_u, 0) = f(a_u)$ and applying the triangle inequality, we get:
$$d(G_0(u),f(u))\leq d(G_0(u),\hat{F}(a_u)(0))+d(f(a_u),f(u))<\frac{\varepsilon }{2}+\frac{\varepsilon }{2}=\varepsilon .$$
This implies that $G_0$ and $f_{|U}$ are $\varepsilon$-close, and hence $G_0 \simeq f_{|U}$. Similarly, $G_1 \simeq g_{|U}$. Since $G_0 \simeq G_1$, we conclude that $f_{|U} \simeq g_{|U}$.
\end{proof}

Now we are ready to state and demonstrate the main result in this subsection.

\begin{theorem}\label{pavesic}
Let $f:X\rightarrow Y$ be a fibration between compact ANR spaces. Then $\tc (f)$ is the smallest nonnegative integer $n$ for which $X\times Y$ admits a cover by $n+1$ subsets
on each of which there exists a local section of $\pi _f$.
\end{theorem}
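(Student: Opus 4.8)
The plan is to show that the two quantities agree by proving the inequality that is not immediate. By definition $\tc(f) = \sn(\pi_f)$, which uses nested open subsets $U_0 \subset \cdots \subset U_{n+1} = X\times Y$ with each difference $U_i\setminus U_{i-1}$ admitting a local section of $\pi_f$; from such a filtration one obtains a cover of $X\times Y$ by $n+1$ (arbitrary) subsets $A_i = U_i\setminus U_{i-1}$ each admitting a local section of $\pi_f$, so the "smallest $n$ over arbitrary covers" is $\le \tc(f)$. The work is the reverse: given a cover of $X\times Y$ by $n+1$ \emph{arbitrary} subsets $A_0,\dots,A_n$, each admitting a local section of $\pi_f$, I must produce a nested open filtration of length $n+1$ witnessing $\sn(\pi_f)\le n$.

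First I would invoke Lemma \ref{caracter}: since $f$ is a fibration, a subset $A\subset X\times Y$ admits a local section of $\pi_f$ if and only if the two maps $(f\circ pr_1)_{|A}$ and $(pr_2)_{|A}$ from $A$ to $Y$ are homotopic. So each $A_i$ in our arbitrary cover has $(f\circ pr_1)_{|A_i}\simeq (pr_2)_{|A_i}$. Now apply Proposition \ref{import}: since $X$ and $Y$ are compact ANR spaces, $X\times Y$ is a compact metric ANR, and $f\circ pr_1, pr_2: X\times Y\to Y$ are maps between compact metric ANR spaces, so there exists an \emph{open} set $U_i\supset A_i$ with $(f\circ pr_1)_{|U_i}\simeq (pr_2)_{|U_i}$; by Lemma \ref{caracter} again, $U_i$ admits a local section of $\pi_f$. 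Thus the arbitrary cover $\{A_i\}$ can be "fattened" to an open cover $\{U_i\}$ of $X\times Y$ (it is a cover since each $U_i\supset A_i$ and the $A_i$ cover), of the same cardinality $n+1$, each $U_i$ admitting a local section of $\pi_f$; this shows $\secat(\pi_f)\le n$, i.e. $\sn_{op}(\pi_f)\le n$.

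To finish I would convert the open cover into a nested filtration: since $\pi_f$ is a fibration (as $f$ is, by \cite[Lem.~4.1]{Pav}) and $X\times Y$ is an ANR, the remarks in the text give $\sn(\pi_f) = \sn_{op}(\pi_f) = \secat(\pi_f)$, so from $\secat(\pi_f)\le n$ we get $\tc(f) = \sn(\pi_f)\le n$. Taking the infimum over all arbitrary covers gives $\tc(f)\le$ (smallest $n$ over arbitrary covers), and combined with the trivial reverse inequality this yields equality.

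The main obstacle is the fattening step, and it is precisely Proposition \ref{import} that carries it: one cannot in general enlarge a set on which two maps become homotopic to an open set with the same property without a hypothesis like being an ANR and a compactness/uniform-continuity argument, which is exactly where Srinivasan's Modified Walsh Lemma (Theorem \ref{walsh}) and the uniform $\varepsilon$ of Remark \ref{compact} enter. Everything else is bookkeeping with Lemma \ref{caracter} and the already-noted equalities $\sn = \sn_{op} = \secat$ for fibrations over ANRs; I would state these explicitly but not reprove them.
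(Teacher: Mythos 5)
Your proof is correct and follows essentially the same route as the paper: Lemma \ref{caracter} to translate local sections into homotopies over each set of the arbitrary cover, Proposition \ref{import} to fatten each such set to an open neighbourhood where the homotopy persists, and Lemma \ref{caracter} again to recover open-domain sections. You are in fact slightly more explicit than the paper about the final bookkeeping, namely passing from the resulting open cover back to the nested filtration defining $\sn(\pi_f)$ via $\sn=\sn_{op}=\secat$ for fibrations over ANR spaces, a step the paper compresses into ``extending this argument to covers.''
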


\begin{proof}
If $s: A \rightarrow X^I$ is a local section of $\pi _f$, according to Lemma \ref{caracter}, it follows that the maps $(f\circ pr_1){|A}$ and $(pr_2){|A}$ are homotopic. Employing Proposition \ref{import}, we can establish an open neighborhood $U$ containing $A$ within $X\times Y$, such that the maps $(f\circ pr_1){|U}$ and $(pr_2)_{|U}$ are homotopic. By applying Lemma \ref{caracter} again, this implies the existence of a local section $s': U \rightarrow X^I$ of $\pi _f$. By extending this argument to covers, we can deduce the result.
\end{proof}

The following result is then immediate:

\begin{corollary}
If $X$ is a compact ANR, then $\tc (X)=\tc _g(X).$ If, in addition, $X$ is connected, then $\cat (X)=\cat _g(X).$
\end{corollary}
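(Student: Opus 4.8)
The plan is to derive the corollary directly from Theorem \ref{pavesic} by specializing the fibration $f$ to the appropriate path fibration in each case. First I would recall that $\tc(X)$ is by definition $\tc(\mathrm{id}_X)$, and that $\mathrm{id}_X:X\to X$ is a fibration between compact ANR spaces whenever $X$ is a compact ANR. For this choice the map $\pi_{\mathrm{id}_X}:X^I\to X\times X$ is precisely the bi-evaluation path fibration $\pi_X$ appearing in Definition \ref{Gtc}. Hence Theorem \ref{pavesic}, applied to $f=\mathrm{id}_X$, says exactly that $\tc(\mathrm{id}_X)$ equals the least $n$ such that $X\times X$ can be covered by $n+1$ \emph{arbitrary} subsets each admitting a local section of $\pi_X$; but that number is by definition $\tc_g(X)$. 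Combined with the trivial inequality $\tc_g(X)\le\tc(X)$ already noted in the text, this yields $\tc(X)=\tc_g(X)$.

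For the second assertion I would run the same argument with the path fibration $p:PX\to X$, $p(\alpha)=\alpha(1)$, in place of $\mathrm{id}_X$. As recalled in the excerpt, if $X$ is a (connected) ANR then $PX$ is an ANR; one checks that when $X$ is moreover compact, $PX$ can be taken to be a compact ANR as well (or, at worst, one argues that the relevant spaces have the homotopy type of compact ANRs, which is all that is needed), so $p:PX\to X$ is a fibration between compact ANR spaces. Since $PX$ is contractible, $\secat$ of $p$ equals $\cat(X)$, and more to the point the identity $\cat(X)=\tc(p:PX\to X)$ stated after Lemma \ref{caracter} (from \cite[Cor. 4.8]{Pav}) lets us transfer the conclusion of Theorem \ref{pavesic}: the smallest $n$ for which $X$ admits a cover by $n+1$ arbitrary subsets on each of which $(\mathrm{id}_X\circ pr_1)_{|A}$ and $(pr_2)_{|A}$ — equivalently, the constant map and the inclusion $A\hookrightarrow X$ — are homotopic is exactly $\cat_g(X)$. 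Putting these together with $\cat_g(X)\le\cat(X)$ gives $\cat(X)=\cat_g(X)$.

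The only genuine subtlety I anticipate is the compactness bookkeeping for the path space $PX$: strictly speaking $PX$ with the compact-open topology need not be literally a \emph{compact} metrizable space even when $X$ is, so to invoke Theorem \ref{pavesic} verbatim one must either replace $PX$ by a homotopy-equivalent compact ANR model, or observe that the proof of Theorem \ref{pavesic} only uses compactness of $X$ (the \emph{domain} of the forward kinematic map is what needs to be compact for the uniform-continuity step in Proposition \ref{import}, applied to $X\times Y$), so that what is actually required is $X$ compact ANR and $Y$ ANR. Under that mild reading the argument for $\cat$ goes through unchanged, since there $Y=X$ is a compact ANR and the domain of $p$ is $PX$. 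I would therefore phrase the $\cat$ part by citing $\cat(X)=\tc(p:PX\to X)$ and remarking that Theorem \ref{pavesic} (or its proof) applies, rather than re-deriving anything; the corollary is then genuinely immediate.
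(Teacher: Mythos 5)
Your derivation of the first equality is exactly the intended one: the paper gives no argument beyond calling the corollary immediate, and specializing Theorem \ref{pavesic} to $f=\mathrm{id}_X$ (for which $\pi_{\mathrm{id}_X}=\pi_X$), combined with the trivial inequality $\tc_g(X)\le\tc(X)$, is all that is needed.

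For the $\cat$ statement you have correctly spotted the one genuine subtlety --- $PX$ is essentially never compact, so Theorem \ref{pavesic} does not apply verbatim to $p:PX\to X$ --- but your proposed repair has the roles of domain and codomain reversed. In Proposition \ref{import} it is compactness of the \emph{codomain} that produces the uniform $\varepsilon$ of Remark \ref{compact}; compactness of the domain is used only to get uniform continuity of the two maps. Inside Theorem \ref{pavesic} applied to $f=p$, those two maps are $p\circ pr_1,\ pr_2\colon PX\times X\to X$: the codomain $X$ is compact, and although $PX\times X$ is not, both maps are $1$-Lipschitz for the supremum metric on $PX$ and the max metric on the product, so the uniform-continuity step survives without any compactness of the domain (and $X^I$ is a bounded metric ANR, so the Walsh lemma still applies). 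Under your stated ``mild reading'' (domain of $f$ compact, codomain merely ANR) the argument for $p$ would actually \emph{fail}, since the domain of $p$ is precisely the non-compact space $PX$. Your alternative fix --- replacing $PX$ by a homotopy-equivalent compact model --- is also not licensed as stated, because $\tc(f)=\sn(\pi_f)$ is defined via $\sn$, which is not known to be invariant under such replacements. The cleanest route, if you prefer not to reopen the proof of Theorem \ref{pavesic}, is to observe that the second assertion is already a special case of the Lemma and Theorem closing Section 1 (Srinivasan's $\cat_g(f)=\cat(f)$ for maps into a connected ANR), applied to $\mathrm{id}_X$; no compactness is required there.
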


\subsection{J. Calcines' approach through sectional category}\label{calcines}

We will explore the extension of Srinivasan's results from a different approach, considering a more general invariant, the sectional category. This extension was introduced by J. Calcines in his work \cite{GC}, achieving the desired results without the restriction of compactness. Our primary sources of reference for this subsection will be \cite{GC} and \cite{Sr2}. We will initiate our investigation by exploring the concept of generalized sectional category, which is defined in a manner analogous to the generalized LS category or topological complexity.

\begin{definition}\cite[Def. 2.1]{GC}
Let $f:X\rightarrow Y$ be a map. The \emph{generalized sectional category of $f$}, denoted as $\secat_g(f),$ is the least nonnegative integer $n$ such that $B$ admits
a cover by $n+1$ subsets $A_0,...,A_n$ on each of which there exists a local homotopy section of $f$.
If such an integer does not exist, then we set $\secat_g(p)=\infty .$
\end{definition}

As in the standard case, when $f: X \rightarrow Y$ is a fibration, we can equivalently consider strict local sections due to the homotopy lifting property. Extending the concept of generalized sectional category to encompass general maps not only offers a more versatile framework but also establishes connections between topological invariants across a broader range of scenarios.

A fundamental property that relates the generalized versions of the sectional category for two maps is presented in the following result.
If there exists a homotopy commutative diagram in the form of
$$\xymatrix{
{X} \ar[rr]^{\alpha } \ar[dr]_f & & {X'} \ar[dl]^{f'} \\  & Y &}$$
\noindent then, we have the inequality $\secat_g(f') \leq \secat_g(f)$.
In fact, if we consider any subset $A$ of $Y$ along with a local homotopy section $s: A \rightarrow X$ of $f$, it becomes clear that
$s':=\alpha \circ s: A \rightarrow X'$ is a local homotopy section of $f'$. By reasoning with covers, we can derive the inequality.

\medskip
As we will state below, the generalized sectional category is a homotopy invariant. We leave the proof of this result to the reader.

\begin{proposition}\label{invariance}
Let us consider the following homotopy commutative diagram, in which $\alpha$ and $\beta$ are homotopy equivalences:
$$\xymatrix{
{X} \ar[r]^{\alpha }_{\simeq } \ar[d]_f & {X'} \ar[d]^{f'} \\
{Y} \ar[r]^{\simeq }_{\beta } & {Y'} }$$ In this situation, we have $\secat_g(f) = \secat_g(f').$
\end{proposition}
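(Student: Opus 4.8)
The plan is to reduce the statement to the one-sided inequality $\secat_g(f')\le\secat_g(f)$, which was established just before the proposition, and then apply it twice. The key observation is that the hypothesis is symmetric: if $\alpha$ and $\beta$ are homotopy equivalences fitting into the square $\beta\circ f\simeq f'\circ\alpha$, then choosing homotopy inverses $\alpha'$ of $\alpha$ and $\beta'$ of $\beta$ yields a homotopy commutative square $\beta'\circ f'\simeq f\circ\alpha'$ going the other way. So it suffices to check this one point carefully and then quote the previous result in both directions to get $\secat_g(f')\le\secat_g(f)$ and $\secat_g(f)\le\secat_g(f')$.

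First I would make precise the reduction of the commutative \emph{square} to the commutative \emph{triangle} that the preceding paragraph handles. The triangle lemma says: whenever there is a homotopy commutative diagram with $f,f'$ over a common base $Y$ and a map $\gamma\colon X\to X'$ with $f'\circ\gamma\simeq f$, then $\secat_g(f')\le\secat_g(f)$. To apply it to our square I would set $\gamma:=\alpha$ but first replace the target map $f'\colon X'\to Y'$ by the map $\beta'\circ f'\colon X'\to Y$, where $\beta'$ is a homotopy inverse of $\beta$. One has $\beta'\circ f'\circ\alpha\simeq\beta'\circ\beta\circ f\simeq f$, so the triangle lemma gives $\secat_g(\beta'\circ f')\le\secat_g(f)$. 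It then remains to note that $\secat_g(\beta'\circ f')=\secat_g(f')$: composing with the homotopy equivalence $\beta'$ on the base does not change the generalized sectional category, because a subset $A\subseteq Y'$ admits a local homotopy section of $f'$ if and only if $\beta'(A)$ — or more cleanly, the argument should be run through the bijection on covers induced by $\beta$ and its inverse. Concretely, if $A\subseteq Y$ has a local homotopy section $s\colon A\to X'$ of $\beta'\circ f'$, then $\beta'\circ f'\circ s\simeq \mathrm{incl}_A$, and precomposing data along $\beta$ transports a cover of $Y$ witnessing $\secat_g(\beta'\circ f')$ to a cover of $Y'$ witnessing $\secat_g(f')$ of the same cardinality; the reverse transport uses $\beta'$. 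This shows $\secat_g(f')=\secat_g(\beta'\circ f')\le\secat_g(f)$.

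Finally I would run the entire argument again with the roles of the primed and unprimed data exchanged: the square with $\alpha',\beta'$ (homotopy inverses) satisfies $\beta'\circ f'\simeq f\circ\alpha'$ up to homotopy, so by the same reasoning $\secat_g(f)\le\secat_g(f')$. Combining the two inequalities yields $\secat_g(f)=\secat_g(f')$, as desired.

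The main obstacle I anticipate is not conceptual but bookkeeping: verifying cleanly that post-composing the base map with a homotopy equivalence leaves $\secat_g$ unchanged, i.e. that $\secat_g(\beta'\circ f')=\secat_g(f')$. This requires checking that the correspondence $A\leftrightarrow \beta(A)$ (or a suitable homotopy-theoretic substitute) carries local homotopy sections of one map to local homotopy sections of the other, using only that $\beta\circ\beta'\simeq\mathrm{id}$ and $\beta'\circ\beta\simeq\mathrm{id}$ and that being a local homotopy section is preserved under homotopy of the ambient maps. None of this is deep, but it is the only place where one must argue rather than merely cite the triangle lemma; everything else is a formal two-sided application of an already-stated inequality.
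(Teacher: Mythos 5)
Your argument is correct and is exactly the proof the paper has in mind when it leaves this to the reader: reduce the square to the triangle inequality stated just before the proposition by replacing $f'$ with $\beta'\circ f'$, and then symmetrize using homotopy inverses. The one point to fix in the write-up is the base-change step $\secat_g(\beta'\circ f')=\secat_g(f')$: your first instinct to use images $\beta'(A)$ does not work (images need not cover and the transported section is ill-defined if $\beta'$ is not injective), so you should, as you then suggest, transport covers by \emph{preimages} (a cover $\{A_i\}$ of $Y$ for $\beta'\circ f'$ goes to the cover $\{(\beta')^{-1}(A_i)\}$ of $Y'$, with sections precomposed by the restrictions of $\beta'$, and conversely via $\beta$), the required homotopies to the inclusions being obtained by restricting the homotopies $\beta\circ\beta'\simeq \mathrm{id}_{Y'}$ and $\beta'\circ\beta\simeq \mathrm{id}_{Y}$ to the subsets in question.
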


The very first example of generalized sectional category is the generalized Lusternik-Schnirelmann category. Indeed, for a path-connected $X$
with a base point $x_0\in X$, any contractible set in $X$, $A$, can be deformed to the base point so that the inclusion $A\hookrightarrow X$ is, actually, homotopic to the constant path at $x_0$. Consequently, like in the standard case, $\cat _g(X)$ can be equivalently given as
$\cat _g(X)=\secat _g(*\stackrel{x_0}{\longrightarrow }X)$. Here $x_0:*\rightarrow X$ represents the constant map at $x_0.$ Notably, when we consider the path fibration
$p:PX\rightarrow X$, we can use Proposition
\ref{invariance} to conclude that, in fact,
$\cat _g(X)=\secat _g(p:PX\rightarrow X)$.
Observe that the fibration $p$ approximates the constant map $x_0:*\rightarrow X.$

The fundamental example of generalized sectional category is the \emph{generalized topological complexity} of a space. Indeed, directly from the definition, it can be verified that $\tc _g(X)=\secat _g( \pi _X:X^I\rightarrow X\times X).$
Once again, since $\pi_X$ is the fibration that serves as an approximation of the diagonal map $\Delta_X: X \rightarrow X \times X$, as per Proposition \ref{invariance}, we can establish the equality
$\tc _g(X)=\secat _g(\Delta _X:X\rightarrow X\times X)$.

\medskip
As a result of Definition \ref{Gtc}, we derive a corollary from Proposition \ref{invariance}:
\begin{corollary}\label{hom-inv}
$\tc _g(X)$ depends only on the homotopy type of the space $X.$
\end{corollary}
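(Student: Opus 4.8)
The plan is to deduce Corollary~\ref{hom-inv} directly from the identification
$\tc_g(X)=\secat_g(\Delta_X:X\rightarrow X\times X)$ established just above, together with the
homotopy invariance of generalized sectional category (Proposition~\ref{invariance}). So the
work is not in proving anything new about paths or sections, but in assembling the right
homotopy commutative square to which Proposition~\ref{invariance} applies.

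First I would let $h:X\xrightarrow{\simeq}X'$ be a homotopy equivalence and form the square
$$\xymatrix{
{X} \ar[r]^{h}_{\simeq } \ar[d]_{\Delta _X} & {X'} \ar[d]^{\Delta _{X'}} \\
{X\times X} \ar[r]^{h\times h}_{\simeq } & {X'\times X'} }$$
The bottom arrow $h\times h$ is a homotopy equivalence because a product of homotopy
equivalences is a homotopy equivalence (a homotopy inverse is the product of the homotopy
inverses, and the required homotopies are the products of the given ones). The square commutes
on the nose, since $(h\times h)\circ\Delta_X=(h,h)=\Delta_{X'}\circ h$, hence in particular
it commutes up to homotopy. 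Then Proposition~\ref{invariance} yields
$\secat_g(\Delta_X)=\secat_g(\Delta_{X'})$, and by the identification above this is exactly
$\tc_g(X)=\tc_g(X')$.

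There is essentially no hard step here; the only point that needs a line of justification is
that $\pi_X:X^I\to X\times X$ is indeed a fibrational replacement of $\Delta_X$, i.e. that the
square
$$\xymatrix{
{X^I} \ar[r]^{\pi _X} \ar[d]_{\simeq } & {X\times X} \ar@{=}[d] \\
{X} \ar[r]^{\Delta _X} & {X\times X} }$$
(with the left vertical map the evaluation at $0$, or equivalently the inclusion of constant
paths as a homotopy inverse) commutes up to homotopy with a homotopy equivalence on the left,
so that Proposition~\ref{invariance} gives $\secat_g(\pi_X)=\secat_g(\Delta_X)$; but this is
already recorded in the paragraph preceding the corollary, so I would simply cite it. If one
preferred to avoid even that, one could argue entirely at the level of $\pi_X$: a homotopy
equivalence $h:X\to X'$ induces a homotopy equivalence $h^I:X^I\to (X')^I$ making the square
with $\pi_X$ and $\pi_{X'}$ and bottom map $h\times h$ homotopy commute, and apply
Proposition~\ref{invariance} to that square directly.

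In short, the proof is: (1) given $h:X\xrightarrow{\simeq}X'$, note $h\times h$ is a homotopy
equivalence; (2) observe the diagonal square commutes strictly; (3) invoke
Proposition~\ref{invariance} to get $\secat_g(\Delta_X)=\secat_g(\Delta_{X'})$; (4) translate
back through $\tc_g(-)=\secat_g(\Delta_{(-)})$. The main (and only mild) obstacle is making
sure one is allowed to pass between $\tc_g$ defined via $\pi_X$ and $\secat_g(\Delta_X)$, which
is precisely what the discussion immediately above the corollary provides.
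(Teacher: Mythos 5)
Your proof is correct and follows exactly the paper's argument: form the strictly commutative square relating $\Delta_X$ and $\Delta_{X'}$ via $h$ and $h\times h$, apply Proposition~\ref{invariance}, and translate back through $\tc_g(-)=\secat_g(\Delta_{(-)})$. You simply spell out the details that the paper leaves implicit.
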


\begin{proof}
If $X$ and $X'$ are homotopy equivalent spaces, then the diagonal maps $\Delta _X$ and $\Delta _{X'}$ fit in a homotopy commutative square where they are connected through homotopy equivalences. Therefore we can apply Proposition \ref{invariance}.
\end{proof}

Now, consider a map $f:X\rightarrow Y$, where $Y$ is a path-connected based space. The following diagram is commutative up to homotopy:
$$\xymatrix{ {*} \ar[rr] \ar[dr]_{y_0} & & {X} \ar[dl]^{f} \\
& Y &}$$ \noindent Hence, we can utilize the argument presented before Proposition \ref{invariance} to conclude that $\secat _g(f)\leq \cat _g(Y).$ Furthermore, if $X$ is contractible, the map $*\rightarrow X$ becomes a homotopy equivalence, implying $\secat _g(f)=\cat _g(Y)$ by Proposition \ref{invariance}. As a special case, we have $\tc_g(X) \leq \cat _g(X\times X)$ for any path-connected space $X.$ Finally, one can follow the same proof as as outlined in \cite[Theorem 5]{F} to establish that:
$$\cat_ g(X)\leq \tc _g(X)\leq \cat _g(X\times X).$$

\medskip
Now we prove that generalized sectional category agrees with sectional category of a fibration provided we are dealing with absolute neighborhood retracts. In our particular context, we must make slight modifications to the original statement of the Modified Walsh Lemma, as presented in J. Calcines' work \cite[Lem. 2.6]{GC}. To prove this result, one simply needs to emulate the approach outlined in \cite[Theorem 2.3]{Sr2}, carefully adapting it to ensure that the distances $d(g(u),f(a_u))$ and $d(u,a_u)$ are both less than $\varepsilon (f(a_u)).$

\begin{theorem}\cite[Lem. 2.6]{GC}, \cite[Th. 2.3]{Sr2}\label{walsh2}
Let $X$ be a metric space, $A\subset X$, $K$ a bounded metric ANR space and $f:A\rightarrow K$ a continuous map. Then, for any function $\varepsilon :K\rightarrow (0,+\infty )$ (not necessarily continuous) there exists an open neighborhood $U\supset A$ in $X$ and a continuous map $g:U\rightarrow K$ such that:
\begin{enumerate}
\item For every $u\in U$ there exists $a_u\in A$ such that
$$\max \{d(g(u),f(a_u)),d(u,a_u))\}<\varepsilon (f(a_u))$$

\item $g_{|A}\simeq f.$
\end{enumerate}
\end{theorem}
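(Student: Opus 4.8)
The plan is to imitate the proof of the original Modified Walsh Lemma (Theorem~\ref{walsh}), as given in \cite[Th. 2.3]{Sr2}, and simply track the second estimate $d(u,a_u)$ alongside the first one $d(g(u),f(a_u))$, shrinking the neighbourhood $U$ whenever necessary so that both are controlled by the same bound $\varepsilon(f(a_u))$. Recall that the conclusion of Theorem~\ref{walsh} already delivers $d(g(u),f(a_u))<\varepsilon(f(a_u))$, and Remark~\ref{coment} asserts that the witnesses $a_u$ can additionally be chosen with $d(u,a_u)$ as small as desired; the content here is just to package these two facts into a single clean statement with the uniform bound on the right-hand side.

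First I would recall the mechanism behind Walsh's construction: one embeds $K$ (a bounded metric ANR, hence an ANE) nicely, uses the ANE property to extend $f:A\to K$ to a continuous map on some open neighbourhood, then composes with a suitable "nearby" retraction or partition-of-unity argument to produce $g:U\to K$ whose values stay within the prescribed $\varepsilon$-tube of $f(A)$. At each stage of that construction the points $a_u\in A$ realizing the proximity $d(g(u),f(a_u))<\varepsilon(f(a_u))$ are selected from small neighbourhoods of $u$; by the continuity of $f$ near $A$ and the local nature of the extension, for each $a\in A$ there is a radius $\rho_a>0$ so that the construction restricted to $B(a,\rho_a)$ produces values $\rho_a$-close (and with witness within $B(a,\rho_a)$). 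Replacing $U$ by $U\cap\bigcup_{a\in A} B(a,\min\{\rho_a,\varepsilon'\})$ for an auxiliary function $\varepsilon'$ we can force $d(u,a_u)$ to be smaller than any preassigned positive function of $a_u$; in particular, after relabelling, smaller than $\varepsilon(f(a_u))$.

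Concretely, the steps I would carry out are: (1) apply Theorem~\ref{walsh} to $A\subset X$, $K$, $f$ and the given $\varepsilon$, obtaining an open $U_0\supset A$ and $g_0:U_0\to K$ with $d(g_0(u),f(a_u))<\varepsilon(f(a_u))$ for a choice of $a_u\in A$; (2) invoke Remark~\ref{coment}, which says the $a_u$ may be taken with $d(u,a_u)$ arbitrarily small --- here "arbitrarily small" must be read as: for every choice of positive function $\eta:A\to(0,+\infty)$ one can shrink $U_0$ to an open $U\supset A$ so that $d(u,a_u)<\eta(a_u)$; (3) specialize $\eta:=\varepsilon\circ f$, i.e. set $\eta(a):=\varepsilon(f(a))$, which is a perfectly admissible (not necessarily continuous) positive function on $A$; (4) on this shrunk neighbourhood $U$, both $d(g(u),f(a_u))<\varepsilon(f(a_u))$ and $d(u,a_u)<\varepsilon(f(a_u))$ hold, so the maximum of the two is $<\varepsilon(f(a_u))$, giving conclusion~(1); conclusion~(2), $g_{|A}\simeq f$, is untouched and carries over verbatim from Theorem~\ref{walsh}.

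The main obstacle is really a bookkeeping one: one must check that the two estimates can be arranged \emph{simultaneously with the same witness} $a_u$, rather than each being achievable separately but via different choices. This is why it is cleaner not to treat Theorem~\ref{walsh} and Remark~\ref{coment} as black boxes but to re-run the proof of \cite[Th. 2.3]{Sr2} directly: in that argument there is a single $a_u$ produced for each $u$, and both the value-proximity and the point-proximity are consequences of the same local choice, so it suffices to take, at the relevant step, the controlling radius to be the minimum of the two requirements --- i.e. replace whatever $\varepsilon$-type gauge appears there by $\min\{\varepsilon(f(a)),\text{(the point-proximity gauge)}\}$ --- and the rest of Walsh's argument goes through unchanged. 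No genuinely new idea is needed beyond this uniformization of the gauge.
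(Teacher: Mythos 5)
Your proposal matches the paper's treatment: the paper gives no detailed proof either, stating only that one should emulate the argument of Srinivasan's Modified Walsh Lemma \cite[Th. 2.3]{Sr2} while adapting the gauge so that both $d(g(u),f(a_u))$ and $d(u,a_u)$ are controlled by $\varepsilon(f(a_u))$ for the same witness $a_u$. Your observation that the two estimates must be realized simultaneously by a single $a_u$ (hence that one should re-run Walsh's construction with the minimized gauge rather than combine Theorem~\ref{walsh} and Remark~\ref{coment} as black boxes) is exactly the point the paper is making.
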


Now we are ready for the statement and proof of the main result in this section.

\begin{theorem}\cite[Th. 2.7]{GC}\label{chulo}
Let $p:E\rightarrow B$ be a fibration where $E$ and $B$ are ANR spaces. Then $\secat(p)=\secat _g(p).$
\end{theorem}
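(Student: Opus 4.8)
The inequality $\secat_g(p)\le\secat(p)$ is immediate from the definitions, since every open cover witnessing $\secat(p)$ is in particular a cover by arbitrary subsets witnessing $\secat_g(p)$. So the real content is the reverse inequality: given a cover of $B$ by arbitrary subsets each admitting a local (homotopy) section of $p$, I must produce an open cover of the same cardinality with the same property. By the standard reduction, it suffices to prove the following \emph{local fattening} statement: if $A\subset B$ is an arbitrary subset on which $p$ admits a local homotopy section (equivalently, since $p$ is a fibration, a strict local section), then there is an open neighborhood $U\supset A$ in $B$ on which $p$ also admits a local section. Applying this to each member of the cover and replacing each set by its open fattening yields the result. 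Note that by Theorem~\ref{invariance} (homotopy invariance of $\secat_g$) and Theorem~1.1 we may assume $B$ is a (bounded) metric ANR, and likewise $E$ is an ANR; pick bounded metrics $d$ on both $E$ and $B$.

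\textbf{The fattening step.}
Suppose $s:A\to E$ is a strict local section of $p$, i.e.\ $p\circ s=\iota_A$ where $\iota_A:A\hookrightarrow B$ is the inclusion. Since $E$ is a bounded metric ANR, Proposition~\ref{epsilon} provides a function $\varepsilon:E\to(0,+\infty)$ such that any two $\varepsilon$-near maps into $E$ are homotopic; I also want control downstairs, so using uniform-type continuity of $p$ I shrink $\varepsilon$ so that $d(e,e')<\varepsilon(e)$ forces $d(p(e),p(e'))$ to be small — more precisely, for each $a\in A$ choose $\delta_a>0$ witnessing continuity of $p$ at $s(a)$, and feed the appropriate function into the Modified Walsh Lemma. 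Now apply Theorem~\ref{walsh2} to the map $s:A\to E$ (with $X=B$, $K=E$) and the function $\varepsilon$: this yields an open neighborhood $U\supset A$ in $B$ and a continuous map $g:U\to E$ such that for every $u\in U$ there is $a_u\in A$ with $\max\{d(g(u),s(a_u)),\,d(u,a_u)\}<\varepsilon(s(a_u))$, and moreover $g_{|A}\simeq s$.

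\textbf{Verifying $g$ is a homotopy section over $U$.}
I must check that $p\circ g\simeq \iota_U:U\hookrightarrow B$. Fix $u\in U$ with associated $a_u\in A$. On one hand $d(g(u),s(a_u))<\varepsilon(s(a_u))$, so $p(g(u))$ is close to $p(s(a_u))=a_u$ (by the shrinking of $\varepsilon$). On the other hand $d(u,a_u)<\varepsilon(s(a_u))$, which (again after the shrinking, since $a_u=p(s(a_u))$ and $u$ is near it) puts $u$ close to $a_u$ as well. Combining via the triangle inequality, $p\circ g$ and $\iota_U$ are $\varepsilon'$-near for a suitable function $\varepsilon'$ on $B$ obtained from the Walsh data — so by Proposition~\ref{epsilon} applied to the ANR $B$, we get $p\circ g\simeq\iota_U$. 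Hence $g:U\to E$ is a homotopy section of $p$ over the open set $U\supset A$. Since $p$ is a fibration, $g$ can be deformed to a strict section if desired, but a homotopy section already suffices for $\secat$. Running this over a cover realizing $\secat_g(p)=n$ produces an open cover of $B$ by $n+1$ sets each admitting a homotopy section, so $\secat(p)\le n=\secat_g(p)$, completing the proof.

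\textbf{Main obstacle.}
The delicate point is the simultaneous quantitative control: the Modified Walsh Lemma as stated (Theorem~\ref{walsh2}) already bundles both $d(g(u),f(a_u))<\varepsilon(f(a_u))$ \emph{and} $d(u,a_u)<\varepsilon(f(a_u))$, which is exactly what is needed, but one must choose the input function $\varepsilon$ on $E$ carefully so that $\varepsilon$-nearness upstairs \emph{and} the bound on $d(u,a_u)$ downstairs both translate into $\varepsilon'$-nearness of $p\circ g$ and $\iota_U$ for some function $\varepsilon'$ that works for Proposition~\ref{epsilon} on $B$. This requires interleaving the continuity moduli of $p$ at the (uncountably many) points $s(a)$, $a\in A$, with the two ANR-nearness functions — straightforward in the compact case (Remark~\ref{compact} gives uniform constants, reproducing Theorem~\ref{pavesic}) but needing the pointwise, non-uniform bookkeeping of the $\varepsilon(\cdot)$ formalism in general. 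Everything else is the routine passage from a single set to a finite cover.
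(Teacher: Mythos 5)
Your proposal is correct and follows essentially the same route as the paper: apply the two-distance Modified Walsh Lemma to the strict section $s:A\to E$ with an input function built from the nearness function of Proposition~\ref{epsilon} on $B$ pulled back through $p$ together with pointwise continuity moduli of $p$, then check that $p\circ g$ and $\iota_U$ are near enough downstairs to be homotopic. The only quibble is that your initial appeal to Proposition~\ref{epsilon} for the ANR $E$ is superfluous (the homotopy you need lives in $B$, and the paper's function $\bar{\varepsilon}(e)=\min\{\delta(e),\varepsilon(p(e))\}$ is exactly the "shrinking" you describe), but this is harmless.
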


\begin{proof}
Since $B$ is an ANR, Proposition \ref{epsilon} implies the existence of a function $\varepsilon :B\rightarrow (0,+\infty )$ such that the cover $\mathcal{W}=\{B(b,\varepsilon (b)):b\in B\}$ of $B$ has the property that any pair of maps $f,g:Z\rightarrow B$ that are $\mathcal{W}$-close are homotopic.

We utilize continuity of the map $p:E\rightarrow B$ to select, for each $e\in E$, a $\delta (e)>0$ such that $p(B(e,\delta (e)))\subset B(p(e),\varepsilon (p(e))).$ We then define the function $\bar{\varepsilon }:E\rightarrow (0,+\infty )$ as $\bar{\varepsilon }(e)=\min \{\delta (e),\varepsilon (p(e))\}.$

Now let us consider the scenario where we have a subset $A\subset B$ along with a strict local section $s:A\rightarrow E$ of $p.$
According to Theorem \ref{walsh2}, we can find an open neighborhood $U\supset A$ in $B$ and a map $s':U\rightarrow E$ satisfying that for all $u\in U$, there exists $a_u\in A$ such that $\max \{d(s'(u),s(a_u)),d(u,a_u))\}<\bar{\varepsilon }(s(a_u))$.
Now, if $inc_U:U\hookrightarrow B$ denotes the canonical inclusion, then it is not very difficult to check that $p\circ s'$ and $inc_U:U\rightarrow B$ are $\mathcal{W}$-close, and hence they are homotopic: Indeed, for any $u\in U$ we observe that $d(s'(u),s(a_u))<\bar {\varepsilon }(s(a_u))\leq \delta (s(a_u))$, which implies $s'(u)\in
B(s(a_u),\delta (s(a_u)))$. Therefore, we have:
$$p(s'(u))\in p(B(s(a_u),\delta (s(a_u))))\subset B(a_u,\varepsilon (a_u))$$
\noindent indicating that $p(s'(u))\in B(a_u,\varepsilon (a_u))$.
Furthermore, we note that $d(u,a_u)<\bar{\varepsilon }(s(a_u))\leq \varepsilon (p(s(a_u)))=\varepsilon (a_u),$ which means that $u\in B(a_u,\varepsilon
(a_u)).$

Using this argument for covers we conclude the proof.
\end{proof}

\begin{remark}
We observe that the previous result is equally valid for any continuous map $f:X\rightarrow Y$ between ANR spaces, not necessarily a fibration. In fact, a result from T. Miyata \cite[Th. 2.1]{Miy} ensures that $f$ can be factored as $f=pq$, where $q:X\rightarrow E$ is a homotopy equivalence with $E$ being an ANR space, and $p:E\rightarrow Y$ is a map having a property that is slightly stronger than the usual homotopy lifting property required for a Hurewicz fibration. Specifically, $p$ is a map having the so-called \emph{strong homotopy lifting property} (SHLP) with respect to every space. This means
that for any commutative diagram with solid arrows
$$\xymatrix{
{Z} \ar[r]^h \ar[d]_{i_0} & {E} \ar[d]^p \\
{Z\times I} \ar[r]_H \ar@{.>}[ur]^{\widetilde{H}} & {Y}
}$$ \noindent there exists a map $\widetilde{H}:Z\times I\rightarrow E$ such that $\widetilde{H}i_0=h,$ $p\widetilde{H}=H$ and $\widetilde{H}$ is constant on $\{z\}\times I$ whenever $H$ is constant on $\{z\}\times I$.

Then, by using the homotopy invariance of $\secat (-)$ along with its generalized counterpart (see Proposition \ref{invariance}) and Theorem \ref{chulo} above, we obtain $\secat(f)=\secat _g(f)$. Extending this argument further and once again leveraging the homotopy invariance of $\secat (-)$ and $\secat _g(-)$, we can conclude that Theorem \ref{chulo} above holds true even when $f$ is any continuous map between spaces having the homotopy type of a CW-complex (or an ANR space).
\end{remark}

\begin{remark}
We also want to emphasize that Miyata's article establishes a rather interesting and comprehensive result beyond what we just mentioned in the previous remark. If \textbf{ANR} denotes the full subcategory of the category \textbf{Top} of topological spaces and continuous maps, then it is proved in \cite{Miy} that \textbf{ANR}, along with the class of strong fibrations (i.e., maps having the SHLP with respect to every space) and the class of usual homotopy equivalences, has the structure of a \emph{fibration category} in the sense of Baues \cite{B}. Fibration categories serve as a foundational concept in axiomatic homotopy theory, providing a framework for studying homotopy-related phenomena in a broad and abstract setting.
\end{remark}

\medskip
Theorem \ref{chulo} gives rise to various compelling consequences and outcomes. Among these, we can highlight the following:

\begin{proposition}\cite[Cor. 2.8]{GC}
Consider a space $X$ having the homotopy type of a CW-complex (or equivalently, of an ANR). In this context, we can assert that $\tc(X) = \tc_g(X).$
\end{proposition}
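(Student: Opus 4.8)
The plan is to deduce this immediately from Theorem \ref{chulo} together with the identification of $\tc_g$ and $\tc$ as sectional categories of the bi-evaluation fibration. First I would recall that, by definition and by the remark preceding Corollary \ref{hom-inv}, we have $\tc_g(X) = \secat_g(\pi_X : X^I \to X \times X)$, and likewise $\tc(X) = \secat(\pi_X : X^I \to X \times X)$ in the standard case. So it suffices to check that the hypotheses of Theorem \ref{chulo} are met by the fibration $\pi_X$, namely that both its source $X^I$ and its target $X \times X$ are ANR spaces.

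Next I would handle the ANR hypotheses. If $X$ is itself an ANR, then $X \times X$ is an ANR because finite products of ANR spaces are ANR (\cite[Chapt. II, Prop. 4.4]{Hu}), and $X^I$ is an ANR by \cite[Chapt. VI, Th. 2.4]{Hu}, both facts recalled in the preliminaries. The map $\pi_X$ is a Hurewicz fibration (the bi-evaluation path fibration). Hence Theorem \ref{chulo} applies verbatim and gives $\secat(\pi_X) = \secat_g(\pi_X)$, i.e.\ $\tc(X) = \tc_g(X)$.

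To get the stated, more general conclusion for a space $X$ that merely has the homotopy type of a CW-complex, I would invoke Theorem~1.1 of the excerpt: such an $X$ has the homotopy type of an ANR, say $X \simeq X'$ with $X'$ an ANR. Then $\tc(X) = \tc(X')$ since $\tc$ is a homotopy invariant, and $\tc_g(X) = \tc_g(X')$ by Corollary \ref{hom-inv}. Combining with the ANR case $\tc(X') = \tc_g(X')$ established above yields $\tc(X) = \tc_g(X)$. (Alternatively, one can appeal directly to the final sentence of the remark following Theorem \ref{chulo}, which already asserts that $\secat(f) = \secat_g(f)$ for maps between spaces of CW homotopy type, and apply it to $\pi_X$ after the homotopy-invariance reduction.)

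I do not anticipate a serious obstacle here; the only points requiring care are (i) confirming that $X^I$ and $X \times X$ are genuinely ANR under the stated hypothesis — which is exactly where Theorem~1.1 and the product/path-space ANR closure results are used — and (ii) making sure the homotopy-invariance step is legitimate, which it is by Corollary \ref{hom-inv} on the generalized side and by the classical homotopy invariance of $\tc$ (equivalently Proposition \ref{invariance} applied to $\Delta_X$) on the standard side. If anything is delicate it is bookkeeping: one must keep straight that $\tc_g(X) = \secat_g(\Delta_X) = \secat_g(\pi_X)$, so that Theorem \ref{chulo} — stated for fibrations — is being applied to the fibration $\pi_X$ rather than to the diagonal itself.
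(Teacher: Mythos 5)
Your argument is correct and coincides with the paper's own proof: reduce to the case where $X$ is an ANR via homotopy invariance of $\tc$ and $\tc_g$ (Corollary \ref{hom-inv}), observe that $X\times X$ and $X^I$ are then ANR spaces by the cited results of Hu, and apply Theorem \ref{chulo} to the fibration $\pi_X:X^I\rightarrow X\times X$. No gaps to report.
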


\begin{proof}
Since $\tc_g(-)$ and $\tc(-)$ are homotopy invariants, we can assume that $X$ is an ANR. In this scenario, it is well-known that both $X\times X$ and $X^I$ are ANR spaces, as stated in \cite[Chapt. II, Prop. 4.4]{Hu} and \cite[Chapt. VI, Th. 2.4]{Hu}, respectively. Consequently, by applying Theorem \ref{chulo} to the fibration $\pi: X^I \rightarrow X\times X$, we can successfully complete the proof.
\end{proof}

We can also reaffirm a previously established result, proven by T. Srinivasan in \cite{Sr, Sr2}:

\begin{proposition}\cite[Th. 3.2]{Sr2}, \cite[Cor. 2.10]{GC}
Consider a space $X$ having the homotopy type of a connected CW-complex (or equivalently, of a connected ANR space). In this context, we can state that $\cat(X) = \cat_g(X)$.
\end{proposition}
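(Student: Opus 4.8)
The plan is to reduce the statement $\cat(X)=\cat_g(X)$, for $X$ with the homotopy type of a connected CW-complex, to Theorem \ref{chulo} applied to a suitable fibration approximating a constant map. Since both $\cat(-)$ and $\cat_g(-)$ are homotopy invariants (for $\cat_g$ this follows from Proposition \ref{invariance} together with the identification $\cat_g(X)=\secat_g(x_0:*\to X)$, and for the classical $\cat$ this is standard), we may assume without loss of generality that $X$ itself is a connected ANR space.

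First I would recall the identification, discussed just before Proposition \ref{invariance}, that $\cat_g(X)=\secat_g(p:PX\to X)$, where $p:PX\to X$ is the path fibration $p(\alpha)=\alpha(1)$ with $PX=\{\alpha\in X^I:\alpha(0)=x_0\}$. The entirely analogous identity $\cat(X)=\secat(p:PX\to X)$ holds in the classical setting. Thus it suffices to verify that $p:PX\to X$ is a fibration between ANR spaces and then invoke Theorem \ref{chulo}, which gives $\secat(p)=\secat_g(p)$, hence $\cat(X)=\cat_g(X)$.

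The key remaining point is that $PX$ and $X$ are both ANR spaces. Here I would use the facts already cited in the preliminaries: since $X$ is an ANR and (choosing a bounded metric) a metric space, the path space $X^I$ is an ANR by \cite[Chapt. VI, Th. 2.4]{Hu}, and $PX$, being the homotopy fiber type subspace of paths starting at $x_0$, is an ANR by \cite[Chapt. VI, Th. 3.1]{Hu} (this is exactly the assertion recorded in the paragraph preceding the first Lemma on $\cat_g$). The map $p:PX\to X$ is a Hurewicz fibration by the standard path-lifting argument. Therefore all hypotheses of Theorem \ref{chulo} are met.

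The main obstacle, and really the only genuine content beyond bookkeeping, is ensuring that the generalized invariant $\cat_g$ is correctly matched up with $\secat_g$ of the path fibration: one must check that an arbitrary (not necessarily open) subset $A\subseteq X$ on which the inclusion is nullhomotopic is precisely a subset admitting a local homotopy section of $p:PX\to X$, and then apply the homotopy-invariance Proposition \ref{invariance} to replace the constant map $x_0:*\to X$ by the fibration $p$. This is precisely the argument sketched in the text before Proposition \ref{invariance}, using that in a path-connected space any contractible subset can be deformed to the basepoint. Once this matching is in place, connectedness of $X$ guarantees $PX$ is nonempty and path-connected, and the conclusion follows immediately from Theorem \ref{chulo}. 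Alternatively, and even more directly, one may simply note that $\cat(X)=\secat(\Delta'$-type map$)$ is subsumed: since $\cat_g(X)=\secat_g(p)$ and $\cat(X)=\secat(p)$ for the same fibration $p$ between ANRs, Theorem \ref{chulo} finishes the proof in one line.
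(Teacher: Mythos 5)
Your argument is correct and follows essentially the same route as the paper: reduce by homotopy invariance to the case where $X$ is a connected ANR, identify $\cat(X)=\secat(p)$ and $\cat_g(X)=\secat_g(p)$ for the path fibration $p:PX\rightarrow X$, note that $PX$ is an ANR, and invoke Theorem \ref{chulo}. The paper's own proof is exactly this one-line appeal to the preceding proposition's argument applied to $p:PX\rightarrow X$.
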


\begin{proof}
Given that $PX$ is an ANR, one can employ a similar argument as in the previous result for the path fibration $p: PX \rightarrow X.$
\end{proof}

Another interesting example of numerical homotopy invariant agreeing with its generalized version (i.e., using general covers) is given by the sequential topological
complexity $\tc_r(-)$, also called higher topological complexity. This interesting invariant was introduced by Y.B. Rudyak in \cite{R}. The $r$-th sequential topological complexity provides a topological measure for assessing the complexity of the motion planning problem, in which the robot is tasked with sequentially visiting $r$ specified stages. In mathematical terms, if we consider a path-connected space X, the $r$-sequential topological complexity of X is defined as the sectional category of the fibration:
$$\pi _X^r:X^{J_r}\rightarrow X^r,\hspace{10pt}\alpha \mapsto (\alpha (1_1),...,\alpha (1_r))$$
\noindent Here, $J_r$ represents the wedge of $r$ copies of the closed unit interval $I=[0,1]$ with each interval having the base point at $0\in [0,1]$, and $1_i$ indicating the value $1$ in the $i$-th interval. The fibration $\pi _X^r$ is a fibrational substitute of the $r$-diagonal map $\Delta _r:X\rightarrow X^r$ so $\tc _r(X)=\secat (\Delta _r)$ as well. We can straightforwardly define the generalized $r$-sequential topological complexity of a space $X$, denoted as $\tc_{r,g}(X)$, as the generalized sectional category of  $\pi _X^r$.

\begin{proposition}
Consider a space $X$ having the homotopy type of a CW-complex (or equivalently, of an ANR). In this context, we can assert that $\tc _r(X) = \tc_{r,g}(X).$
\end{proposition}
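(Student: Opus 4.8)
The plan is to reduce the statement to Theorem \ref{chulo} in exactly the same way as was done for ordinary topological complexity and for $\cat$. First I would invoke the homotopy invariance of both $\tc_r(-)$ and $\tc_{r,g}(-)$: the former is classical, and the latter follows from Proposition \ref{invariance} together with the fact that $\pi_X^r$ is a fibrational substitute of the $r$-diagonal $\Delta_r : X \to X^r$, so $\tc_{r,g}(X) = \secat_g(\Delta_r)$ depends only on the homotopy type of $X$ (the argument is the same as in Corollary \ref{hom-inv}, since homotopy equivalent $X, X'$ yield a homotopy commutative square relating $\Delta_r^X$ and $\Delta_r^{X'}$ through homotopy equivalences). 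Hence we may assume without loss of generality that $X$ itself is an ANR.

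Next I would check that the fibration $\pi_X^r : X^{J_r} \to X^r$ has ANR base and total space. The base $X^r$ is a finite product of ANR spaces, hence an ANR by \cite[Chapt. II, Prop. 4.4]{Hu}. For the total space $X^{J_r}$, note that $J_r$ is a compact (in fact finite, locally finite) CW-complex, and the exponential law identifies $X^{J_r}$ with a finite fibre product of copies of $X^I$ over evaluation maps; since $X^I$ is an ANR by \cite[Chapt. VI, Th. 2.4]{Hu} when $X$ is, one concludes $X^{J_r}$ is an ANR as well — alternatively one cites the general fact that $X^K$ is an ANR whenever $X$ is an ANR and $K$ is a compact ANR (\cite[Chapt. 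VI]{Hu}). With both endpoints of $\pi_X^r$ being ANR spaces, I would apply Theorem \ref{chulo} directly to the fibration $p = \pi_X^r : X^{J_r} \to X^r$ to get $\secat(\pi_X^r) = \secat_g(\pi_X^r)$, that is, $\tc_r(X) = \tc_{r,g}(X)$.

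I do not expect a genuine obstacle here: this proof is a routine transcription of the proof of the proposition $\tc(X) = \tc_g(X)$, with $X \times X$ replaced by $X^r$ and $X^I$ replaced by $X^{J_r}$. The only point demanding a word of care is the verification that $X^{J_r}$ is an ANR, since $J_r$ is a wedge rather than a cube; but $J_r$ is a compact ANR (a finite wedge of intervals), so the cited results from \cite[Chapt. VI]{Hu} on function spaces $X^K$ with $K$ a compact ANR apply verbatim. Everything else — the homotopy invariance of $\tc_{r,g}$, the reduction to the ANR case, and the appeal to Theorem \ref{chulo} — is formal.
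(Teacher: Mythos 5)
Your proposal is correct and follows essentially the same route as the paper: reduce to the ANR case via homotopy invariance of $\tc_{r,g}$ (argued as in Corollary \ref{hom-inv}), observe that $X^r$ and $X^{J_r}$ are ANR spaces, and apply Theorem \ref{chulo} to the fibration $\pi_X^r$. Your extra care in justifying that $X^{J_r}$ is an ANR (via $J_r$ being a compact ANR, or the fibre-product description) is a welcome elaboration of a point the paper passes over quickly, but it is not a different argument.
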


\begin{proof}
Using a similar argument to the one presented in Corollary \ref{hom-inv}, it can be verified that $\tc_{r,g}(-)$ is a homotopy invariant. Consequently, we can assume that $X$ is an ANR space. In this setting, both $X^r$ and $X^{J_r}$ are also ANR spaces, as supported by \cite[Chapt. II, Prop. 4.4]{Hu} and \cite[Chapt. VI, Th. 2.4]{Hu}. Therefore, we can apply Theorem \ref{chulo}.
\end{proof}

As explained in \cite{B-G-R-T}, there are other fibrations (which may not necessarily provide fibrational substitutes for the iterated diagonal) that can be used to define $\tc _r$. In all these examples, the spaces involved are ANR spaces whenever $X$ is one.

\medskip
We also discover an implication related to the topological complexity of a fibration. This can be compared with Theorem \ref{pavesic}.

\begin{proposition}
Let $f:X\rightarrow Y$ be a fibration between ANR spaces. Then $\tc (f)=\secat _g(\pi _f)$.
\end{proposition}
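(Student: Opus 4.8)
The plan is to show $\tc(f) = \secat(\pi_f) = \secat_g(\pi_f)$, where the first equality is already recorded in the text (for a fibration $f$ between ANR spaces one has $\tc(f) = \secat(\pi_f: X^I \to X \times Y)$, since $\pi_f$ is a fibration precisely when $f$ is, by \cite[Lem. 4.1]{Pav}). So the only real content is the middle-to-right equality, and the natural route is to invoke Theorem \ref{chulo} directly applied to the map $\pi_f$. For this I must check two things: that $\pi_f$ is a fibration, and that its source $X^I$ and target $X \times Y$ are ANR spaces.

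First I would verify the hypotheses of Theorem \ref{chulo}. The target $X \times Y$ is an ANR because a finite product of ANR spaces is an ANR (\cite[Chapt. II, Prop. 4.4]{Hu}). The source $X^I$ is an ANR because $X$ is an ANR implies $X^I$ is an ANR (\cite[Chapt. VI, Th. 2.4]{Hu}, already cited in the excerpt for exactly this purpose). That $\pi_f$ is a fibration follows from \cite[Lem. 4.1]{Pav}, since $f$ is assumed to be a fibration. With all hypotheses in place, Theorem \ref{chulo} gives $\secat(\pi_f) = \secat_g(\pi_f)$ at once.

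Then I would close the loop: combining $\tc(f) = \secat(\pi_f)$ (valid for a fibration $f$ between ANR spaces, as noted before) with $\secat(\pi_f) = \secat_g(\pi_f)$ yields $\tc(f) = \secat_g(\pi_f)$, which is the claim. One could optionally remark that this also equals $\tc_g(f)$ if one defines the generalized topological complexity of a map via covers by arbitrary subsets admitting sections of $\pi_f$, paralleling Theorem \ref{pavesic}; this is immediate from the definition of $\secat_g$ applied to $\pi_f$ (using that $\pi_f$ is a fibration, so strict and homotopy local sections coincide).

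I do not anticipate a genuine obstacle here — the statement is essentially a corollary of Theorem \ref{chulo} once the ANR bookkeeping is done. The only point requiring a moment of care is confirming that $X^I$ (and not merely $Y^I$ or $PY$) is an ANR; this is exactly the content of \cite[Chapt. VI, Th. 2.4]{Hu}, which the excerpt has already flagged, so it presents no difficulty. If one wished to drop the fibration hypothesis on $f$, one would instead replace $\pi_f$ by a fibrational substitute and invoke homotopy invariance of both $\secat$ and $\secat_g$ (Proposition \ref{invariance}), exactly as in the remark following Theorem \ref{chulo}; but as stated, the direct application suffices.
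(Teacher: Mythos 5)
Your proposal is correct and follows exactly the paper's (much terser) argument: note that $\tc(f)=\secat(\pi_f)$ since $\pi_f$ is a fibration between the ANR spaces $X^I$ and $X\times Y$, then apply Theorem \ref{chulo} to $\pi_f$ to get $\secat(\pi_f)=\secat_g(\pi_f)$. Your version simply spells out the ANR bookkeeping that the paper leaves implicit.
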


\begin{proof}
Simply note that $\tc(f) = \secat(\pi_f)$, where $\pi_f$ represents a fibration between ANR spaces.
\end{proof}

Theorem \ref{chulo} also has interesting implications for the so-called homotopic distance between two maps, a concept introduced by E. Mac\'{\i}as-Virg\'os and D. Mosquera-Lois in \cite{MV-ML}. Consider two maps, $\varphi, \psi :X\rightarrow Y.$ The homotopic distance $\D(\varphi,\psi)$ between $\varphi$ and $\psi $ is the smallest non-negative integer $n\geq 0$ for which there exists an open covering $X=U_0\cup \cdots \cup U_n$ such that the restrictions $\varphi |_{U_j}$ and $\psi |_{U_j}$ are homotopic maps, for all $j\in \{0,1,\cdots ,n\}.$ If no such covering exists, then we set $\D(\varphi,\psi) =\infty .$

There is a useful characterization in terms of the sectional category. To establish this, we consider the following pullback:
$$\xymatrix{
{\mathcal{P}(\varphi ,\psi)} \ar[d]_{\pi _Y^*} \ar[r] & {Y^I} \ar[d]^{\pi _Y} \\
{X} \ar[r]_{(\varphi ,\psi )} & {Y\times Y} }$$

\begin{lemma}\cite[Th. 2.7]{MV-ML}\label{dist}
The homotopic distance of $\varphi, \psi :X\rightarrow Y$ can be expressed as $\D(\varphi,\psi)=\secat (\pi _Y^*:\mathcal{P}(\varphi ,\psi)\rightarrow X)$.
\end{lemma}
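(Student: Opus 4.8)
The plan is to verify directly that, for a subset $U\subseteq X$, the fibration $\pi_Y^*$ admits a local section over $U$ if and only if the restrictions $\varphi|_U$ and $\psi|_U$ are homotopic; the equality of the two numerical invariants will then follow by passing to covers, exactly as in the standard characterizations of $\secat$ and of $\tc$.

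First I would record that $\pi_Y:Y^I\to Y\times Y$ is a fibration, hence so is its pullback $\pi_Y^*:\mathcal{P}(\varphi,\psi)\to X$. Consequently $\secat(\pi_Y^*)$ may be computed using \emph{strict} local sections over open sets, by the identity $\secat=\sn_{op}$ for fibrations recalled earlier. By the universal property of the pullback,
$$\mathcal{P}(\varphi,\psi)=\{(x,\gamma)\in X\times Y^I:\ \gamma(0)=\varphi(x),\ \gamma(1)=\psi(x)\},$$
with $\pi_Y^*$ the projection onto the first coordinate. Thus a strict section $s:U\to\mathcal{P}(\varphi,\psi)$ of $\pi_Y^*$ over an open set $U\subseteq X$ is forced to have the form $s(x)=(x,\gamma_x)$, so it amounts precisely to a map $\widehat{H}:U\to Y^I$ with $\widehat{H}(x)(0)=\varphi(x)$ and $\widehat{H}(x)(1)=\psi(x)$ for all $x\in U$.

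Next I would invoke the exponential law $\mathrm{Map}(U,Y^I)\cong\mathrm{Map}(U\times I,Y)$, valid since $I$ is locally compact Hausdorff and $Y^I$ carries the compact-open topology, to identify $\widehat{H}$ with its adjoint $H:U\times I\to Y$. The boundary conditions on $\widehat{H}$ translate exactly into $H(-,0)=\varphi|_U$ and $H(-,1)=\psi|_U$, i.e.\ $H$ is a homotopy $\varphi|_U\simeq\psi|_U$. This yields a bijective correspondence between local sections of $\pi_Y^*$ over $U$ and homotopies $\varphi|_U\simeq\psi|_U$. Applying it to each member of an open cover of $X$ shows that an open cover realizing $\D(\varphi,\psi)$ produces one realizing $\secat(\pi_Y^*)$ and conversely, whence $\D(\varphi,\psi)=\secat(\pi_Y^*)$.

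The only point that needs care — and it is standard rather than genuinely hard — is the reduction from local homotopy sections (with which $\secat$ is defined) to strict local sections, which is precisely where the fibration property of $\pi_Y^*$ enters. One must also stay attentive to the fact that a section of the pullback has its $X$-coordinate pinned to be the inclusion $U\hookrightarrow X$, so that the section carries no data beyond the path-valued map, equivalently the homotopy.
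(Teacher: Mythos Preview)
The paper does not actually supply its own proof of this lemma: it is stated with the citation \cite[Th. 2.7]{MV-ML} and no proof environment follows. So there is nothing in the present paper to compare against; the statement is imported from Mac\'{\i}as-Virg\'os and Mosquera-Lois.

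That said, your argument is correct and is essentially the standard one. The identification of $\mathcal{P}(\varphi,\psi)$ with pairs $(x,\gamma)$ satisfying $\gamma(0)=\varphi(x)$, $\gamma(1)=\psi(x)$ is exactly the pullback description, and the exponential adjunction then converts strict local sections of $\pi_Y^*$ over $U$ into homotopies $\varphi|_U\simeq\psi|_U$ and back. Your remark that the fibration property of $\pi_Y^*$ (inherited from $\pi_Y$) is what allows one to pass from local homotopy sections to strict local sections is the only nontrivial step, and you have flagged it appropriately. The argument is complete as written.
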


\begin{remark} Considering the generalized homotopic distance $\D _g(\varphi ,\psi)$, where arbitrary subsets are taken instead of open sets, one can easily check that the previous result also applies, expressing $\D _g(\varphi ,\psi)=\secat _g(\pi _Y^*:\mathcal{P}(\varphi ,\psi)\rightarrow X)$.
\end{remark}

In general, taking any two maps $X\stackrel{\varphi }{\longrightarrow }Y\stackrel{\psi }{\longleftarrow }X'$, it is not difficult to check that the double mapping track, or standard homotopy pullback, of $\varphi $ and $\psi$
$$W_{\varphi ,\psi}=\{(x,x',\alpha )\in X\times  X'\times Y^I:\alpha (0)=\varphi (x),\alpha (1)=\psi (x')\}$$ \noindent is an ANR space, provided that $X$, $X'$ and $Y$ are ANR spaces. This will be helpful in the next result:

\begin{proposition}
If $\varphi, \psi :X\rightarrow Y$ are maps between spaces having the homotopy type of a CW-complex (equivalently, of an ANR space), then $\D (\varphi ,\psi)=\D _g(\varphi, \psi )$.
\end{proposition}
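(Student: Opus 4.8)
The plan is to reduce the statement to Theorem~\ref{chulo} via the homotopy invariance of both $\D(-,-)$ and $\D_g(-,-)$, using the sectional-category characterizations in Lemma~\ref{dist} and the subsequent remark. First I would invoke Corollary~\ref{hom-inv}-style reasoning: since $\D(\varphi,\psi)$ and $\D_g(\varphi,\psi)$ are homotopy invariants (a fact one checks exactly as for $\tc_g$, replacing the diagonal by the pair map $(\varphi,\psi)$ and appealing to Proposition~\ref{invariance}), I may assume without loss of generality that $X$ and $Y$ are ANR spaces rather than merely spaces of the homotopy type of a CW-complex.

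Next I would realize $\D$ and $\D_g$ as (generalized) sectional categories of a single fibration. By Lemma~\ref{dist} we have $\D(\varphi,\psi)=\secat(\pi_Y^*:\mathcal{P}(\varphi,\psi)\rightarrow X)$, and by the remark following it, $\D_g(\varphi,\psi)=\secat_g(\pi_Y^*:\mathcal{P}(\varphi,\psi)\rightarrow X)$, where $\pi_Y^*$ is the pullback of the path fibration $\pi_Y:Y^I\rightarrow Y\times Y$ along $(\varphi,\psi):X\rightarrow Y\times Y$. Since $\pi_Y$ is a fibration, so is its pullback $\pi_Y^*$. So it remains to verify that the total space $\mathcal{P}(\varphi,\psi)$ is an ANR; granting that, Theorem~\ref{chulo} applied to the fibration $\pi_Y^*:\mathcal{P}(\varphi,\psi)\rightarrow X$ between ANR spaces yields $\secat(\pi_Y^*)=\secat_g(\pi_Y^*)$, which is precisely $\D(\varphi,\psi)=\D_g(\varphi,\psi)$.

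For the ANR claim I would identify $\mathcal{P}(\varphi,\psi)$ with the double mapping track $W_{\varphi,\psi}$ (in the notation just introduced in the excerpt, with $X'=X$), or rather with a homeomorphic variant thereof: a point of $\mathcal{P}(\varphi,\psi)$ is a pair $(x,\gamma)$ with $x\in X$ and $\gamma\in Y^I$ satisfying $\gamma(0)=\varphi(x)$, $\gamma(1)=\psi(x)$, which corresponds to the point $(x,x,\gamma)\in W_{\varphi,\psi}$. The excerpt already asserts that $W_{\varphi,\psi}$ is an ANR whenever $X$ and $Y$ are; combining this with the fact that $X$ is an ANR finishes the verification. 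The one genuinely substantive point, and the place where I expect to need care, is confirming that $\mathcal{P}(\varphi,\psi)$ really is (homeomorphic to) this double mapping track and hence an ANR --- i.e.\ that the general principle ``homotopy pullbacks of ANR diagrams are ANR'' applies here; everything else is a formal chaining of homotopy invariance, Lemma~\ref{dist}, and Theorem~\ref{chulo}.

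\begin{proof}
By the same argument as in Corollary~\ref{hom-inv}, using Proposition~\ref{invariance} with the pair map $(\varphi,\psi)$ in place of the diagonal, both $\D(\varphi,\psi)$ and $\D_g(\varphi,\psi)$ depend only on the homotopy types of $\varphi$ and $\psi$; hence we may assume $X$ and $Y$ are ANR spaces. Form the pullback fibration $\pi_Y^*:\mathcal{P}(\varphi,\psi)\rightarrow X$ of the path fibration $\pi_Y:Y^I\rightarrow Y\times Y$ along $(\varphi,\psi):X\rightarrow Y\times Y$. Since $\pi_Y$ is a fibration, so is $\pi_Y^*$. Moreover $\mathcal{P}(\varphi,\psi)$ is homeomorphic to the double mapping track $W_{\varphi,\psi}$ of $X\stackrel{\varphi}{\rightarrow}Y\stackrel{\psi}{\leftarrow}X$ via $(x,\gamma)\mapsto(x,x,\gamma)$, which is an ANR because $X$ and $Y$ are. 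Thus $\pi_Y^*$ is a fibration between ANR spaces, so Theorem~\ref{chulo} gives $\secat(\pi_Y^*)=\secat_g(\pi_Y^*)$. By Lemma~\ref{dist} and the remark following it, the left-hand side is $\D(\varphi,\psi)$ and the right-hand side is $\D_g(\varphi,\psi)$, so $\D(\varphi,\psi)=\D_g(\varphi,\psi)$.
\end{proof}
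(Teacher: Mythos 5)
Your overall strategy (reduce to the ANR case by homotopy invariance, rewrite both sides as $\secat$ and $\secat_g$ of the fibration $\pi_Y^*$ via Lemma~\ref{dist} and its generalized counterpart, then apply Theorem~\ref{chulo}) is exactly the paper's, but there is a genuine gap at the one step you yourself flagged: the identification of $\mathcal{P}(\varphi,\psi)$ with the double mapping track. The map $(x,\gamma)\mapsto(x,x,\gamma)$ is \emph{not} a homeomorphism onto $W_{\varphi,\psi}$; it only embeds $\mathcal{P}(\varphi,\psi)$ as the closed subspace of $W_{\varphi,\psi}$ lying over the diagonal $\Delta_X(X)\subset X\times X$. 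The point is that $(\varphi,\psi):X\to Y\times Y$ factors as $(\varphi\times\psi)\circ\Delta_X$, so $\mathcal{P}(\varphi,\psi)$ is the pullback of $W_{\varphi,\psi}\to X\times X$ along $\Delta_X$, i.e.\ $(\pi'_Y)^{-1}(\Delta_X(X))$, whereas $W_{\varphi,\psi}$ itself is the pullback along all of $\varphi\times\psi$. These are not homeomorphic, and not even homotopy equivalent in general: for $X=S^1$ and $Y=\ast$ one gets $\mathcal{P}(\varphi,\psi)\cong S^1$ while $W_{\varphi,\psi}\cong S^1\times S^1$. So the "homotopy pullbacks of ANR diagrams are ANR" principle, applied to the diagram $X\stackrel{\varphi}{\to}Y\stackrel{\psi}{\leftarrow}X$, does not deliver the ANR-ness of $\mathcal{P}(\varphi,\psi)$.

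The missing argument is precisely what the paper supplies. Since $X$ is an ANR, so is $X\times X$, and hence $\Delta_X$ is a closed cofibration (Hu, Chapt.~IV, Th.~3.2). By Str{\o}m's theorem the preimage of a closed cofibration under a fibration is again a closed cofibration, so $\mathcal{P}(\varphi,\psi)=(\pi'_Y)^{-1}(\Delta_X(X))\hookrightarrow W_{\varphi,\psi}$ is a closed cofibration into the ANR $W_{\varphi,\psi}$, and one more appeal to Hu's theorem shows $\mathcal{P}(\varphi,\psi)$ is an ANR. With that repair, the rest of your chain of reasoning (homotopy invariance, Lemma~\ref{dist}, Theorem~\ref{chulo}) goes through as written.
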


\begin{proof}
We can assume, by \cite[Prop. 3.13]{MV-ML} (one can check that it is also valid for the generalized case) that $X$ and $Y$ are ANR spaces. Now, consider the following commutative diagram involving two consecutive pullbacks, where the vertical maps are fibrations:
$$\xymatrix{
{\mathcal{P}(\varphi ,\psi)} \ar[r] \ar[d]_{\pi ^*_Y} & {W_{\varphi ,\psi}} \ar[d]^{\pi '_Y} \ar[r] & {Y^I} \ar[d]^{\pi _Y} \\
{X} \ar[r]_{\Delta _X} & {X\times X} \ar[r]_{\varphi \times \psi } & {Y\times Y} }$$
Given that $X$ is an ANR, we can infer that $X\times X$ is also an ANR. Consequently, according to \cite[Chapt. IV, Th. 3.2]{Hu}, the diagonal map $\Delta _X$ is a cofibration, that is, a closed map having the homotopy extension property. Utilizing \cite[Th. 12]{Str} and \cite[Chapt. IV, Th. 3.2]{Hu} once more, we establish that $\mathcal{P}(\varphi ,\psi)$ is itself an ANR. The desired result follows from combining Theorem \ref{chulo} and Lemma \ref{dist} above along its generalized counterpart.
\end{proof}

\medskip

Finally, there are also consequences with a variation of topological complexity called \emph{symmetric topological complexity}, introduced by M. Farber and M. Grant in \cite{F-G}. In this variation, the requirement for motion planners is that the motion from a state $x$ to itself must be static at $x$, while the motion from $y$ to $x$ reverses the movement taken from $x$ to $y$. This formalization is detailed as follows: Consider $F(X, 2)$ the configuration space representing ordered pairs of distinct points in $X$, and let $\pi_X^0: X^I_0 \rightarrow F(X, 2)$ be the restricted fibration derived from $\pi_X: X^I \rightarrow X \times X$. Consequently, $X^I_0=\pi _X^{-1}(F(X,2))$ is the subset of $X^I$ obtained by excluding the free loops on $X$. The group $\mathbb{Z}_2$ acts freely on both $X^I_0$ and $F(X, 2)$, transforming a path $\gamma$ into its inverse path $\bar{\gamma}$ (i.e., $\bar{\gamma}(t)=\gamma (1-t)$) in the former, and interchanging coordinates in the latter. Moreover, $\pi_X^0$ turns out to be a $\mathbb{Z}_2$-equivariant map. Consider $\widetilde{X}^I_0 := X^I_0 / \mathbb{Z}_2$ and $B(X, 2) := F(X,2) / \mathbb{Z}_2$ the corresponding orbit spaces, with
$\widetilde{\pi}_X^0: \widetilde{X}^I_0 \rightarrow B(X, 2)$ denoting the induced fibration by $\pi_X^0$. Then the symmetric topological complexity of $X$ is defined as $$\tc^S(X) := \secat(\widetilde{\pi }_X^0)+1$$
As noted by M. Farber and M. Grant, this is not a homotopy invariant.

We can obviously define the \emph{generalized symmetric topological complexity} of $X$ as $\tc^S_g(X) := \secat _g(\widetilde{\pi }_X^0)+1$.
To verify the equality $\tc^S(X)=\tc^S_g(X)$ for ANR spaces, we require the following lemma:

\begin{lemma}\label{finito}
If a finite group $G$ acts freely on an ANR space $X$, then the orbit space $X/G$ is an ANR space.
\end{lemma}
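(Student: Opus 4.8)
The plan is to exploit the local nature of being an ANR for metrizable spaces: a metrizable space covered by open ANR subspaces is itself an ANR. So the whole proof reduces to producing, around every orbit, an open $G$-invariant set that is an ANR, and then recognizing the quotient of such a set as an open ANR subspace of $X/G$. First I would record that $X/G$ is metrizable: since $G$ is finite and acts freely (hence properly discontinuously) on the metrizable space $X$, the quotient map $\rho\colon X\to X/G$ is a finite-to-one open map, and one checks $X/G$ is metrizable — e.g. by averaging a metric over $G$ to get a $G$-invariant metric $d$ on $X$ and pushing it down to $\bar d([x],[y])=\min_{g,h\in G} d(gx,hy)$, which is a genuine metric on $X/G$ because the action is free and the orbits are finite (hence closed and at positive mutual distance locally).

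Next I would carry out the local construction. Fix a point $x\in X$ with orbit $Gx=\{g_1x,\dots,g_nx\}$, $n=|G|$. Using freeness and the $G$-invariant metric, choose $r>0$ small enough that the balls $B(g_ix,r)$ are pairwise disjoint; set $V=\bigcup_{i} B(g_ix,r)$, which is an open $G$-invariant neighbourhood of the orbit, and $G$ permutes its connected-component-like pieces $B(g_ix,r)$ freely and transitively. Now $V$ is an open subset of the ANR $X$, hence itself an ANR; moreover $\rho|_V\colon V\to \rho(V)$ is a covering projection (the action on $V$ is free and the pieces are swapped around cleanly), and $\rho(V)$ is homeomorphic to any single piece $B(x,r)$, which is open in $X$ and therefore an ANR. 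Thus every orbit in $X/G$ has an open neighbourhood $\rho(V)$ that is an ANR.

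Finally I would assemble: $X/G$ is metrizable and admits an open cover by ANR subspaces $\{\rho(V)\}$, so by the local characterization of ANRs among metrizable spaces (\cite[Chapt. III, Th. 8.1]{Hu}) $X/G$ is an ANR. The main obstacle — and the only genuinely nontrivial point — is the passage from "$G$ acts freely" to "each orbit has a $G$-invariant tube $V$ whose image downstairs is open and homeomorphic to a single slice": this is where finiteness of $G$ and freeness are both essential (it fails badly for infinite groups or non-free actions), and it is precisely the step where one must be careful to use a $G$-invariant metric so that the pieces $B(g_ix,r)$ are genuinely permuted by $G$ rather than merely carried into the union. Everything else — metrizability of the quotient, "open subset of an ANR is an ANR", "local ANR $\Rightarrow$ ANR" — is quoted from the references already cited in the paper. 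An alternative, slicker route avoiding the explicit tube is to invoke that $\rho$ is a (finite-sheeted) covering map, hence a local homeomorphism, so $X/G$ is locally homeomorphic to $X$ and therefore locally an ANR; combined with metrizability this again gives the result.
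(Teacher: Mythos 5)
Your proof is correct and follows essentially the same route as the paper: establish metrizability of $X/G$, observe that the quotient map is a covering projection so that $X/G$ is covered by open ANR subspaces, and conclude by the local characterization of ANRs among metrizable spaces. The only difference is that you construct the quotient metric and the tubes explicitly, where the paper instead cites Engelking's theorem on perfect maps for metrizability and leaves the covering-space argument as an easy check.
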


\begin{proof}
Note that, as the action is properly discontinuous, the orbit space projection $p:X\rightarrow X/G$ is a covering space projection which leads us to easily check that $X/G$ can be covered by open ANR subspaces. Furthermore, since $p$ is a perfect map we have that $X/G$ is also metrizable (see \cite[Th. 4.4.15]{Eng}). Hence, the result follows from \cite[Chapt. 3, Th. 8.1]{Hu} (see also \cite[Th. 3.3]{Han}).
\end{proof}

\begin{proposition}
If $X$ is an ANR space, then $\tc^S(X)=\tc^S_g(X).$
\end{proposition}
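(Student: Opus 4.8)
The plan is to reduce the statement to an application of Theorem \ref{chulo} exactly as in the preceding propositions, with Lemma \ref{finito} supplying the ANR hypotheses. First I would recall that $\tc^S(X) = \secat(\widetilde{\pi}_X^0) + 1$ and $\tc^S_g(X) = \secat_g(\widetilde{\pi}_X^0) + 1$, so it suffices to prove $\secat(\widetilde{\pi}_X^0) = \secat_g(\widetilde{\pi}_X^0)$, and by Theorem \ref{chulo} this will follow once we know that $\widetilde{\pi}_X^0: \widetilde{X}^I_0 \rightarrow B(X,2)$ is a fibration between ANR spaces.

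Next I would establish that the relevant spaces are ANRs. Since $X$ is an ANR, so is $X \times X$, and $X^I$ is an ANR by \cite[Chapt. VI, Th. 2.4]{Hu}; the bi-evaluation $\pi_X: X^I \rightarrow X \times X$ is a fibration. The configuration space $F(X,2) = X \times X \setminus \Delta_X(X)$ is an open subset of the ANR $X \times X$ (the diagonal is closed since $X$, being metrizable, is Hausdorff), hence an ANR, and $X^I_0 = \pi_X^{-1}(F(X,2))$ is an open subset of the ANR $X^I$, hence an ANR. Restricting a fibration over an open subset of the base yields a fibration, so $\pi_X^0: X^I_0 \rightarrow F(X,2)$ is a fibration between ANR spaces. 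Now $\mathbb{Z}_2$ acts freely on both $X^I_0$ and $F(X,2)$, so Lemma \ref{finito} gives that the orbit spaces $\widetilde{X}^I_0$ and $B(X,2)$ are ANR spaces. Finally, the induced map $\widetilde{\pi}_X^0$ on orbit spaces is a fibration: this is a standard fact about equivariant fibrations under free actions of a (finite, in particular discrete) group — one lifts a homotopy $H: Z \times I \rightarrow B(X,2)$ by passing to the covering space $F(X,2) \rightarrow B(X,2)$ locally, using the homotopy lifting property of $\pi_X^0$, and the fact that the covering projections $X^I_0 \rightarrow \widetilde{X}^I_0$ and $F(X,2) \rightarrow B(X,2)$ are themselves fibrations with unique path lifting. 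Once all this is in place, Theorem \ref{chulo} applied to $\widetilde{\pi}_X^0$ yields $\secat(\widetilde{\pi}_X^0) = \secat_g(\widetilde{\pi}_X^0)$, and adding $1$ to both sides completes the proof.

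The main obstacle is the verification that $\widetilde{\pi}_X^0$ is genuinely a Hurewicz fibration; everything else is a routine bookkeeping of ANR closure properties already cited in the paper. One clean way around the obstacle, if one wants to avoid the equivariant-fibration argument, is to invoke the remark following Theorem \ref{chulo}: since $\widetilde{X}^I_0$ and $B(X,2)$ are ANR spaces, Theorem \ref{chulo} holds for \emph{any} continuous map between them, not merely a fibration (via Miyata's factorization and homotopy invariance of $\secat$ and $\secat_g$). This sidesteps the need to check the homotopy lifting property for $\widetilde{\pi}_X^0$ altogether, reducing the whole proposition to: $\widetilde{X}^I_0$ and $B(X,2)$ are ANRs (by Lemma \ref{finito}, since $X^I_0$ and $F(X,2)$ are ANRs on which $\mathbb{Z}_2$ acts freely), hence $\secat(\widetilde{\pi}_X^0) = \secat_g(\widetilde{\pi}_X^0)$, hence $\tc^S(X) = \tc^S_g(X)$.
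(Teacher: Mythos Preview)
Your proposal is correct and follows essentially the same route as the paper: show that $F(X,2)$ and $X^I_0$ are open in ANRs and hence ANRs, apply Lemma \ref{finito} to pass to the orbit spaces, and then invoke Theorem \ref{chulo} for $\widetilde{\pi}_X^0$. The paper simply takes the fibration property of $\widetilde{\pi}_X^0$ as given (it is stated as the ``induced fibration'' when the invariant is defined), whereas you spell out both a direct equivariant-lifting justification and the alternative via the remark after Theorem \ref{chulo}; either way the argument is the paper's, just with more detail.
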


\begin{proof}
As $F(X,2)$ and $X^I_0$ are open subsets of $X\times X$ and $X^I$, respectively, it follows that they are ANR spaces. The result follows from Lemma \ref{finito} above and Theorem \ref{chulo}.
\end{proof}

\section{Generalized monoidal topological complexity}

An intriguing variant of topological complexity is the so-called \emph{monoidal topological complexity}, introduced by N. Iwase and M. Sakai in \cite{I-S}. This arises when, in the algorithm or local rule of the motion planner it is required that the motion be static when the initial and final states of the mechanical system coincide, which is a reasonable requirement. Moving to a more technical context, consider a space $X$ and a subset $A \subset X$ that includes the diagonal, i.e., $\Delta_X(X) \subset A$. A local section $s: A \rightarrow X^I$ of $\pi_X$ is termed \emph{reserved} if, for all $x \in X$, $s_i(x, x) =c_x$ is the constant path at $x.$

\begin{definition}\cite{I-S}\label{tcM}
The \emph{monoidal topological complexity} of a space $X$, denoted as $\tc^M(X),$ is defined as the smallest nonnegative integer $n$ such that $X\times X$ can be covered by $n+1$ open subsets $X\times X=U_0\cup U_1\cup \cdots \cup U_n,$ each of which has a reserved local section of $\pi_X.$ If such an integer $n$ does not exist, then we set $\tc^M(X)=\infty.$
\end{definition}

If the space $X$ is an ENR (Euclidean Neighborhood Retract), A. Dranishnikov noted in \cite{Dr} that the definition of $\tc^M(X)$ can be relaxed in such a way that $\Delta_X(X)$ does not necessarily have to be contained in every open domain $U_i$ of $X\times X$ and $s_i(x,x)=c_x$ whenever $(x,x)\in U_i.$ This observation was further demonstrated by J. Aguilar-Guzm\'an and J. Gonz\'alez in the more general case where $X$ is an ANR. Using their notation, let $\tc^{DM}(X)$ represent this relaxed notion of monoidal topological complexity applied to a general space $X$ (letter $D$ stands for Dranishnikov-type version of monoidal topological complexity).

\begin{proposition}\cite[Prop. 2.3]{A-G}\label{Dranish-M}
If $X$ is an ANR space, then $\tc^{DM}(X)=\tc^{M}(X)$.
\end{proposition}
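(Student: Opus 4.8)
The plan is to establish the non-trivial inequality $\tc^M(X)\le\tc^{DM}(X)$ by starting from an optimal open cover realizing $\tc^{DM}(X)$ and then ``repairing'' each of its members in a neighbourhood of the diagonal, so that afterwards every enlarged open set contains $\Delta_X(X)$ and carries a genuinely reserved local section of $\pi_X$; the reverse inequality is trivial, since any open cover witnessing $\tc^M(X)$ also witnesses $\tc^{DM}(X)$.

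The first step is to manufacture, once and for all, a reserved section near the diagonal. Since $X$ is an ANR, so is $X\times X$, and hence by \cite[Chapt. IV, Th. 3.2]{Hu} the diagonal $\Delta_X:X\to X\times X$ is a closed cofibration; consequently there is an open neighbourhood $W$ of $\Delta:=\Delta_X(X)$ together with a deformation $D:W\times I\to X\times X$ satisfying $D_0=\mathrm{inc}_W$, $D_1(W)\subseteq\Delta$ and $D_t|_\Delta=\mathrm{inc}_\Delta$ for all $t$. Writing $D_t=(D_t^1,D_t^2)$ and concatenating the track $t\mapsto D_t^1(x,y)$ (which runs from $x$ to the common point $D_1^1(x,y)=D_1^2(x,y)$) with the reversed track $t\mapsto D^2_{1-t}(x,y)$ (which runs from that point to $y$), one obtains after passing to adjoints a continuous map $\lambda:W\to X^I$ with $\pi_X\circ\lambda=\mathrm{inc}_W$ and $\lambda(x,x)=c_x$ for every $x$; that is, $\lambda$ is a reserved local section of $\pi_X$ over $W$.

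Now let $n=\tc^{DM}(X)$ be realized by an open cover $X\times X=U_0\cup\cdots\cup U_n$ with local sections $s_i:U_i\to X^I$ of $\pi_X$ such that $s_i(x,x)=c_x$ whenever $(x,x)\in U_i$, and set $V_i:=U_i\cup W$ (shrinking $W$ beforehand if necessary, as explained below). Then $\{V_i\}$ is an open cover of $X\times X$ with $\Delta\subseteq V_i$ for every $i$, and the task is to equip each $V_i$ with a reserved section $\sigma_i$. The idea is to make $\sigma_i$ coincide with $\lambda$ on a neighbourhood of $\Delta$ (which forces it to be reserved), coincide with $s_i$ off $W$, and interpolate between the two in the intervening region by means of a Urysohn-type function $\mu_i:V_i\to[0,1]$. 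Here the ANR hypothesis re-enters essentially: by continuity of $s_i$ and $\lambda$, both $s_i(x,y)$ and $\lambda(x,y)$ tend to the constant path $c_a$ as $(x,y)$ approaches a diagonal point $(a,a)$, so applying Proposition~\ref{epsilon} and Theorem~\ref{walsh2} to $X$ (and to the path ANR $X^I$) one can shrink $W$ so that on $W\cap U_i$ the paths $s_i(x,y)$ and $\lambda(x,y)$ both lie in a common small ball on which any two paths with the same endpoints are homotopic rel endpoints, continuously in $(x,y)$ and constantly (equal to $c_a$) over $\Delta$. Blending along that homotopy produces $\sigma_i:V_i\to X^I$ with $\pi_X\circ\sigma_i=\mathrm{inc}_{V_i}$ and $\sigma_i|_\Delta=c$, hence reserved; running the construction over the whole cover yields $\tc^M(X)\le n$.

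I expect the main obstacle to be exactly this interpolation near the diagonal: one needs a homotopy from $s_i$ to $\lambda$ that is \emph{simultaneously} continuous in the base point $(x,y)$, through paths with the prescribed endpoints $x$ and $y$ (so that the blend remains a section of $\pi_X$) and constant over $\Delta$ (so that the blend remains reserved), together with a compatible choice of the nested neighbourhoods $W\supseteq N\supseteq\Delta$ and of the function $\mu_i$ — matching $\lambda$ on $W\setminus U_i$ and $s_i$ near the frontier of $W$ inside $U_i$. It is precisely for this gluing that local contractibility of $X$ and of $X^I$, i.e. the ANR hypothesis channelled through Proposition~\ref{epsilon} and the Modified Walsh Lemma, is genuinely needed and cannot be weakened to paracompactness; when $X$ is an ENR this step collapses to Dranishnikov's original observation in \cite{Dr}.
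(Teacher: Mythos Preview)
The paper does not supply its own proof of this proposition: it is simply quoted from \cite[Prop.~2.3]{A-G}. So there is no argument in the text against which to compare yours, and your write-up should be assessed on its own merits.

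Your overall plan is the standard one and is correct in outline: the inequality $\tc^{DM}(X)\le\tc^{M}(X)$ is trivial, and for the converse one uses the LEC structure of an ANR to produce a reserved section $\lambda$ on a neighbourhood $W$ of the diagonal, then enlarges each $U_i$ to $V_i=U_i\cup W$ and glues $s_i$ with $\lambda$. The construction of $\lambda$ from the Str{\o}m deformation $D$ of $W$ into $\Delta$ is fine.

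The genuine gap is exactly where you flag it, and it is not closed by the tools you invoke. Proposition~\ref{epsilon} (and the Modified Walsh Lemma) only yield an \emph{unrestricted} homotopy between $\mathcal V$-near maps into the ANR $X^I$; there is no reason this homotopy should run through sections of $\pi_X$ (i.e.\ through paths with the prescribed endpoints $x,y$), nor that it should be stationary over $\Delta\cap U_i$. Both conditions are essential: without the first, your blend $\sigma_i$ is not a section of $\pi_X$; without the second, it is not reserved. Saying that $s_i(x,y)$ and $\lambda(x,y)$ ``lie in a common small ball on which any two paths with the same endpoints are homotopic rel endpoints, continuously in $(x,y)$'' is precisely the missing lemma, and it does not follow from Proposition~\ref{epsilon} or Theorem~\ref{walsh2}; the right ingredients are rather that $c:X\hookrightarrow X^I$ is a closed cofibration between ANRs (so the pair $(X^I,c(X))$ admits a Str{\o}m structure) together with the fact that $\pi_X$ is a Hurewicz fibration, which lets one straighten a free homotopy into a vertical one rel a closed cofibre. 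Until that step is made precise, the argument is a plausible sketch rather than a proof.
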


We also introduce the concept of generalized monoidal topological complexity, denoted as $\tc^M_g(X),$ by considering arbitrary subsets with reserved local sections. Our objective now is to establish that $\tc^M(X) = \tc^M_g(X)$ holds true when $X$ is an ANR. For that, we begin by observing some interesting characteristics of monoidal topological complexity.

Undoubtedly, monoidal topological complexity serves as an upper bound for topological complexity. Moreover, for locally finite simplicial complexes, N. Iwase and M. Sakay proved that $\tc (X)\leq \tc ^M(X)\leq \tc (X)+1$ (see \cite[Th. 1 (erratum)]{I-S}).
However, it is crucial to note that this is a distinct invariant from topological complexity. In contrast to topological complexity, $\tc ^M(-)$ does not exhibit homotopy invariance. To illustrate this, consider the subspace $X$ of $\mathbb{R}^2$ defined as
$X=\bigcup _{n=0}^{\infty }L_n.$ Here, each $L_n$ is a line segment joining $(0,0)$ to $(1,1/n)$ for $n\geq 1$, and $L_0$ is the line segment $[0,1]\times {0}$.
It is evident that $X$ is a contractible space. However, we find that $\tc ^M(X)\neq 0$. To see why, suppose $\tc ^M(X)=0$, and let $s:X\times X\rightarrow X^I$ be a globally continuous section of the path fibration $\pi:X^I\rightarrow X\times X$ such that $s(x,x)=c_x$. Now, for $n\geq 1$, consider the sequence of points $x_n=(1,1/n)$ and $x_0=(1,0)$. It is evident that as $n\rightarrow \infty$, $(x_n,x_0)\rightarrow (x_0,x_0)$ in $X\times X$, and, by continuity, we should have $s(x_n,x_0)\rightarrow s(x_0,x_0)=c_{x_0}$. However, this is not possible, as for all $n\geq 1$, $s(x_n,x_0)$ represents a path from $x_n$ to $x_0$ that must pass through the origin $(0,0).$

This example also illustrates that, in general, $\tc (X)\neq \tc ^M(X)$. It is important to note that this observation does not contradict Iwase-Sakai's conjecture as stated in \cite{I-S}:

\medskip
\textbf{Iwase-Sakai Conjecture.}
For any locally finite simplicial complex $X$, $\tc(X)=\tc ^M(X)$.
\medskip

\begin{figure}
\begin{center}
\includegraphics[scale=0.4]{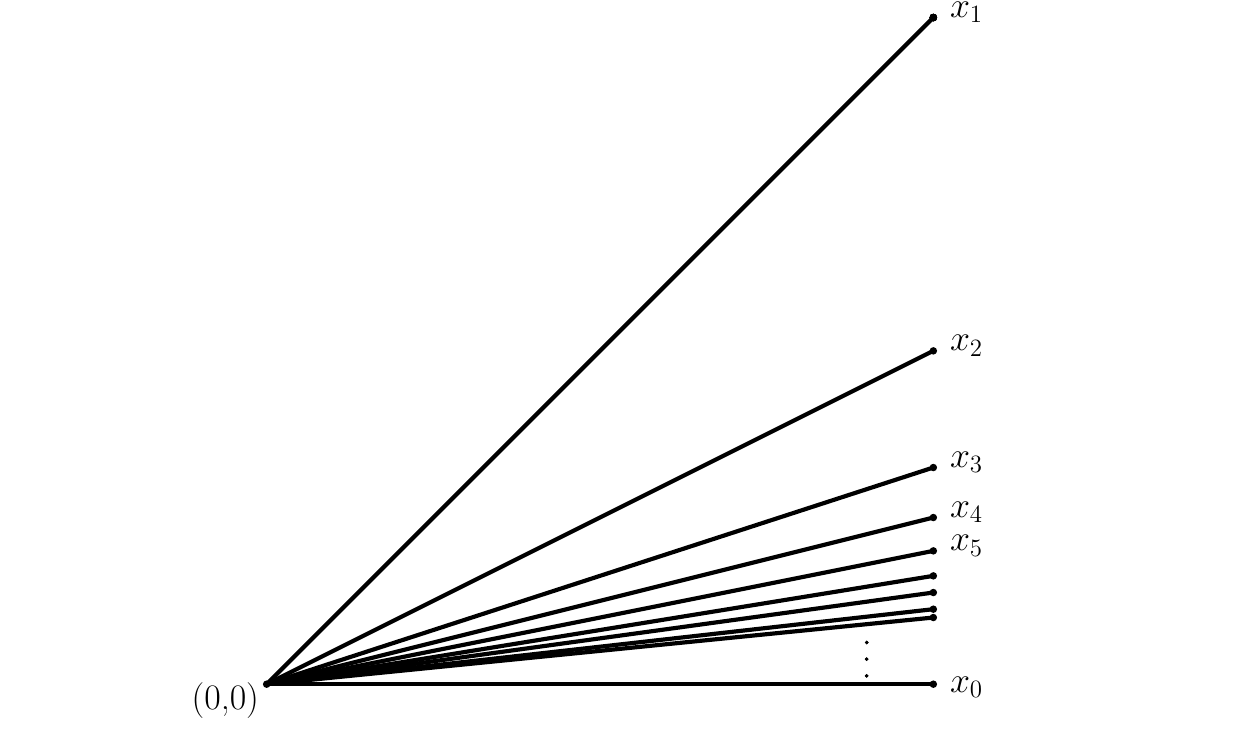} \\
\end{center}
\caption{Example illustrating that $\tc^M(-)$ is not a homotopy invariant}
\end{figure}

\begin{remark}\label{rem-iwase-sakai} The Iwase-Sakai conjecture is still unresolved and remains an open problem. However, there are sufficient conditions in a space for the Iwase-Sakai conjecture to be positively fulfilled. In this regard, A. Dranishnikov proved in \cite[Th. 2.5]{Dr} that the equality $\tc (X)=\tc ^M(X)$ holds provided that $X$ is a $k$-connected simplicial complex (in actual fact, also valid for CW-complexes) satisfying $(k+1)(\tc (X)+1)>{\sf {dim}}(X)+1$. Other partial results exist where the Iwase-Sakai conjecture is positively verified. We can mention, for example, a result proved by Dranishnikov \cite[Lem. 2.7]{Dr}, establishing that for a connected Lie group $G$, we have $\tc(G)=\cat(G)=\tc^M(G)$.
Here, the equality $\tc(G)=\cat(G)$ had been previously proved by M. Farber in \cite[Lem. 8.2]{F3}. Another compelling example supporting the Iwase-Sakai conjecture, which extendes the previous result by Dranishnikov, is presented by J. Aguilar-Guzm\'an and J. Gonz\'alez in \cite[Th. 1.1]{A-G}. This example concerns polyhedral products defined by LS-logarithmic families of locally compact connected CW topological groups.
Explicitly, let $\underline{G}^K$ denote the polyhedral product associated with an abstract simplicial complex $K$ having a vertex set $\{1,\cdots ,m\}$, along with a based family $\underline{G}=\{(G_i,e_i)\}_{i=1}^m$ of locally compact connected CW topological groups, where $e_i$ represents the neutral element of $G_i$. If $\underline{G}$ forms an LS-logarithmic family, then we have
$$\tc (\underline{G}^K)=\tc ^M(\underline{G}^K)=\max \left\lbrace \sum _{i\in \sigma _1\cup \sigma _2}\cat (G_i):\sigma _1,\sigma _2\in K\right\rbrace.$$
The proof methodology of this result essentially employs a Fadell-Husseini perspective on monoidal topological complexity. We will elucidate this approach at the end of this section (refer to Proposition \ref{final}).
\end{remark}

As just mentioned, $\tc ^M(-)$ is not a homotopy invariant. However, when we restrict ourselves to a broad class of spaces known as locally equiconnected spaces, homotopy invariance is achieved through the so-called relative category in the sense of Doeraene-El Haouari \cite{D-H}. At this juncture, it is necessary for us to pause and offer a more detailed explanation of this matter. Firstly, if $i_X:A\hookrightarrow X$ is a cofibration, we can consider, for each nonnegative integer $n$, a subspace $T^n(i_X)$ of $X^{n+1}$ as follows:
$$T^n(i_X)=\{(x_0,x_1,...,x_n)\in X^{n+1}\hspace{3pt}:x_i\in A\hspace{3pt}\mbox{for some}\hspace{3pt}i\}.$$
\noindent The corresponding inclusion is denoted as $t_n:T^n(i_X)\hookrightarrow X^{n+1}$, and it is, in fact, a cofibration. This inclusion $t_n$ is referred to as the $n$-th sectional fat-wedge of $i_X$.

\begin{definition}
The \emph{relative category} of a cofibration $i_X:A\hookrightarrow X$, denoted as ${\sf {relcat}}(i_X)$, is defined as the least nonnegative integer $n$ (or infinity) such that there exists a map $f:X\rightarrow T^n(i_X)$ making the following diagram homotopy commutative :
$$\xymatrix{
{A} \ar[rr]^{\tau _n} \ar@{^{(}->}[d]_{i_X} & & {T^n(i_X)}
\ar@{^{(}->}[d]^{t_n} \\
{X} \ar@{.>}[urr]^f \ar[rr]_{\Delta _{n+1}} & & {X^{n+1}}   }$$
Here $\tau _n$ denotes the diagonal map $\Delta _n:X\rightarrow X^{n+1}$ restricted to $A$.
\end{definition}

\begin{remark}
We must caution the reader that, in fact, Doeraene and El Haouari introduced the relative category for any map, not necessarily a cofibration, using the join construction --a tool more in line with category theory-- which is a combination of homotopy pushouts and pullbacks (see \cite[Def. 2]{D-H} for the original definition, \cite[Prop. 26]{D-H} for its Whitehead characterization and \cite[Cor. 11]{C-V}, \cite[Prop. 19]{C-G-V} for the case of a cofibration). We have chosen not to consider this more general definition because we believe it goes beyond the intended scope of this survey.
\end{remark}

Now, recall that a space $X$ is said to be locally equiconnected (LEC, for short) if the diagonal map $\Delta _X:X\rightarrow X\times X$ is a cofibration (meaning that it is a closed map satisfying the homotopy extension property). The class of LEC spaces is notably extensive, encompassing well-established examples such as CW-complexes and ANR spaces.
Since the relative category is a homotopy invariant, it follows, as a consequence of the following result by J.G. Carrasquel-Vera, J.M. Garc\'{\i}a-Calcines and L. Vandembroucq that $\tc ^M(-)$ is a homotopy invariant when restricted to LEC spaces (also, compare with \cite[Rem. 1.4]{I-S}).

\begin{proposition}\cite[Th. 12]{C-G-V}\label{bridge}
If $X$ is a LEC space, then $$\tc ^M(X)={\sf {relcat}}(\Delta _X:X\rightarrow X\times X).$$
\end{proposition}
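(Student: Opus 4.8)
The plan is to establish the two inequalities $\tc^M(X) \leq {\sf{relcat}}(\Delta_X)$ and ${\sf{relcat}}(\Delta_X) \leq \tc^M(X)$ by directly translating reserved local sections into the maps required by the relative category definition, exploiting the fact that for a LEC space $X$ the diagonal $\Delta_X : X \to X\times X$ is a cofibration, so that $T^n(i_X) = T^n(\Delta_X)$ is a well-defined subspace of $(X\times X)^{n+1}$ and the sectional fat-wedge inclusion $t_n$ is again a cofibration. The key observation tying the two notions together is that a reserved local section $s_i : U_i \to X^I$ of $\pi_X : X^I \to X\times X$ is essentially the same data as a fibrewise nullhomotopy (over $X\times X$) of the inclusion $U_i \hookrightarrow X\times X$ that is \emph{based} along the diagonal, i.e.\ stationary on $\Delta_X(X)\cap U_i$; this ``based'' or ``reserved'' condition is exactly what the relative category encodes through the requirement that the lift $f$ restrict correctly over $A$.

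First I would show ${\sf{relcat}}(\Delta_X) \leq \tc^M(X)$. Given an open cover $X\times X = U_0\cup\cdots\cup U_n$ with reserved local sections $s_i : U_i \to X^I$, I would build a map $f : X\times X \to T^n(\Delta_X) \subset (X\times X)^{n+1}$ whose $i$-th coordinate detects, roughly, ``how far into $U_i$ the point is'' and deforms $U_i$ down along $s_i$ to the diagonal. The standard trick (as in the Iwase--Sakai / Carrasquel--Garc\'ia-Calcines--Vandembroucq treatment) is to choose a partition of unity, or rather use the cofibration $\Delta_X$ to get a Str{\o}m-style neighbourhood structure, and to use each $s_i$ to push $U_i$ onto $\Delta_X(X)$ inside $X\times X$; the reserved condition $s_i(x,x)=c_x$ guarantees that this deformation is stationary on the diagonal, which is precisely what makes the resulting map land in the sectional fat-wedge $T^n(\Delta_X)$ and makes the relevant triangle commute up to a homotopy that restricts to $\tau_n$ over the diagonal copy of $X$.

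Next I would prove the reverse inequality $\tc^M(X) \leq {\sf{relcat}}(\Delta_X)$. Starting from a homotopy-commutative diagram witnessing ${\sf{relcat}}(\Delta_X) = n$, i.e.\ a map $f : X\times X \to T^n(\Delta_X)$ with $t_n \circ f \simeq \Delta_{n+1}$ rel the diagonal (here using that $t_n$ is a cofibration to rectify the homotopy), I would pull back the open cover of $T^n(\Delta_X)$ by the sets $V_i = \{(z_0,\dots,z_n) : z_i \in \Delta_X(X)\}$ to get an open cover $U_i = f^{-1}(V_i)$ of $X\times X$. On each $U_i$ the $i$-th coordinate of $f$ composed with the homotopy $t_n f \simeq \Delta_{n+1}$ gives a path in $X\times X$ from $(x,y)\in U_i$ to a point of the diagonal, together with the diagonal's own path structure this yields a (possibly non-reserved) local section of $\pi_X$; the stationarity of the homotopy over $\Delta_X(X)$ then upgrades it to a reserved one. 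One subtlety is that the $U_i$ obtained this way need not contain $\Delta_X(X)$, so strictly one gets $\tc^{DM}(X) \leq {\sf{relcat}}(\Delta_X)$; here I would invoke Proposition~\ref{Dranish-M} to conclude $\tc^M(X) = \tc^{DM}(X) \leq {\sf{relcat}}(\Delta_X)$, since $X$ is an ANR (or more generally handle it by the fact that on a LEC space the diagonal neighbourhood can be absorbed).

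The main obstacle I expect is the careful bookkeeping of the ``rel diagonal'' homotopies: turning the homotopy-commutativity in the definition of ${\sf{relcat}}$ into honest strict sections that are genuinely constant on the diagonal requires repeatedly using the cofibration property of $\Delta_X$ (and of $t_n$) to replace free homotopies by ones stationary where needed, and one must check these rectifications can be done simultaneously over all $n+1$ coordinates without destroying the fat-wedge condition. This is the technical heart of \cite[Th. 12]{C-G-V}; in a survey I would state the correspondence of data on each piece $U_i$ cleanly, carry out one direction in detail, and refer to \cite{C-G-V} (and \cite{I-S,D-H}) for the homotopy-theoretic rectification lemmas rather than reproducing them.
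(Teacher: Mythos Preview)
The paper does not prove this proposition: it is quoted from \cite[Th.~12]{C-G-V} with no accompanying argument and used as a black box, so there is no in-paper proof to compare your sketch against.

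On your sketch itself, two concrete gaps stand out. In the direction $\tc^M(X)\le{\sf {relcat}}(\Delta_X)$, the sets $V_i=\{(z_0,\dots,z_n):z_i\in\Delta_X(X)\}$ you pull back are \emph{closed}, not open, so $f^{-1}(V_i)$ is not an open cover of $X\times X$; one must thicken via the NDR-pair structure of the cofibration $\Delta_X$ and then rectify the resulting homotopies, which is exactly the technical work in \cite{C-G-V} that you defer to. More seriously, your appeal to Proposition~\ref{Dranish-M} to pass from $\tc^{DM}$ to $\tc^M$ is not available under the stated hypothesis: that result assumes $X$ is an ANR, whereas here $X$ is only assumed LEC, a strictly larger class. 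Your parenthetical about ``absorbing the diagonal neighbourhood on a LEC space'' is the right instinct, but making it rigorous is essentially the substance of what has to be proved, not a side remark. The correspondence you identify between reserved local sections and relatively sectional subsets (compare the homotopy $H(x,y,t)=(s(x,y)(t),y)$ in the proof of Proposition~\ref{cor-chulo2}) is correct and is the conceptual core; what your outline does not yet supply is the genuine LEC-level argument that replaces the ANR shortcut.
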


After examining the relative category of a cofibration, we encounter a characterization involving open coverings, which aligns more closely with our philosophical approach. Specifically, for a cofibration $i_X: A \hookrightarrow X$, we define a subset $U \subset X$ as \emph{relatively sectional} if $A \subset U$ and there exists a homotopy of pairs
$H:(U \times I, A \times I) \rightarrow (X, A)$ such that $H(x, 0) = x$ and $H(x, 1) \in A$ for all $x \in U$.

\begin{proposition}\cite[Th. 1.6]{GC}\label{car-relcat}
If $i_X:A\hookrightarrow X$ is a cofibration where $X$ is a normal
space, then ${\sf {relcat}}(i_X)$ the least nonnegative integer $n$ (or infinity) such that $X$ admits a cover constituted by $n+1$ relatively
sectional open subsets.
\end{proposition}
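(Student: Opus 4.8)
The plan is to prove the equivalence by exhibiting two inclusions between the relative category and the covering-type invariant. Write $r(i_X)$ for the least $n$ such that $X$ admits a cover by $n+1$ relatively sectional open subsets. First I would show ${\sf{relcat}}(i_X) \leq r(i_X)$: given relatively sectional open subsets $U_0,\dots,U_n$ covering $X$, the homotopies of pairs $H_j:(U_j\times I, A\times I)\to (X,A)$ pushing each $U_j$ into $A$ while keeping $A$ inside $A$ are exactly the data needed to build, coordinate by coordinate, a map $f:X\to T^n(i_X)$ realizing the defining homotopy commutative diagram. Concretely, since $X$ is normal and $\{U_j\}$ is an open cover, choose a partition-of-unity-type gadget (or a shrinking to a closed cover) and use the homotopies $H_j$ to define, for $x\in X$, the $j$-th coordinate of $f(x)$ to be $H_j(x,\phi_j(x))$-style expressions so that at least one coordinate lands in $A$; the fact that each $H_j$ fixes $A$ in $A$ guarantees the diagram restricted to $A$ is (homotopy) compatible with $\tau_n$. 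This is the standard translation between fat-wedge maps and sectional covers, now in the relative setting.

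For the reverse inequality ${\sf{relcat}}(i_X)\geq r(i_X)$, I would start from a map $f:X\to T^n(i_X)$ with $t_n\circ f\simeq \Delta_{n+1}$ rel the compatibility with $\tau_n$, and, using that $t_n:T^n(i_X)\hookrightarrow X^{n+1}$ is a cofibration, deform $f$ so that the diagram commutes strictly (or at least strictly on $A$). Then set $U_j := f^{-1}(W_j)$ where $W_j = \{(x_0,\dots,x_n)\in T^n(i_X): x_j\in A\}$; these are open in $T^n(i_X)$ because $A$ is open in... no, $A$ need not be open, so instead I would use that $i_X$ is a cofibration hence $A$ is closed and there is a Strøm-type halo: a neighborhood of $A$ in $X$ deformation-retracting to $A$. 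Pulling the fat-wedge structure back through $f$ together with this halo produces open sets $U_j$ covering $X$, each coming with a homotopy of pairs: the $j$-th coordinate of $t_n\circ f$ restricted to $U_j$ gives a map into $X$ whose values can be pushed into $A$ using the halo retraction, yielding the required $H_j:(U_j\times I, A\times I)\to (X,A)$. Normality of $X$ is used to shrink/fatten so that the $U_j$ genuinely cover and the homotopies are well-defined.

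The main obstacle I expect is the bookkeeping in the reverse direction: ensuring the open sets $U_j$ actually cover $X$ (not just $X\setminus$ something), and that the homotopies of pairs are honest homotopies of pairs, i.e. they keep $A$ inside $A$ for all time, not merely at the endpoints. This is where normality enters essentially — one needs to pass from the "homotopy commutative" diagram to a strictly commutative one using the cofibration property of $t_n$, and then use a normal-space argument (shrinking lemma, or existence of a subordinate partition of unity) to glue the local halo-retractions of the various coordinates into a single coherent family. I would handle this by first replacing $f$ with a strictly commuting map via the homotopy extension property of $t_n$, then invoking the characterization that a cofibration $A\hookrightarrow X$ with $X$ normal admits a Strøm structure $(u,\varphi)$ with $u^{-1}(0)=A$, and using $u$ to carve out the halos and interpolate. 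The first inclusion is comparatively routine; it is this normalization-plus-gluing step that carries the real content, and it is precisely the step where the hypothesis "$X$ normal" cannot be dropped.
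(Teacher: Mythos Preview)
The paper does not give a proof of this proposition: it is stated with the citation \cite[Th.~1.6]{GC} and immediately followed by the definition of generalized relative category, with no intervening proof environment. So there is no ``paper's own proof'' to compare your attempt against; the result is simply imported from Garc\'{\i}a-Calcines' earlier article.

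That said, your outline is the standard Whitehead-type argument one would expect, and it is headed in the right direction. Two remarks on the sketch itself. For the direction $r(i_X)\geq {\sf relcat}(i_X)$, the parenthetical ``shrinking to a closed cover'' is the correct device, not a partition of unity per se: you want Urysohn functions $\phi_j$ equal to $1$ on a closed shrinking $\bar V_j\subset U_j$ so that at every $x$ some $\phi_j(x)=1$ and hence the $j$-th coordinate $H_j(x,1)$ actually lies in $A$; a mere partition of unity only gives $\phi_j(x)>0$, which is not enough to land in $T^n(i_X)$. For the reverse direction, your instinct to strictify via the cofibration $t_n$ and then use a Str{\o}m pair $(u,H)$ for $i_X$ to fatten the coordinate preimages into honest open sets is the right idea, and it is indeed where normality of $X$ is consumed (to get the Urysohn/Str{\o}m data and to guarantee the shrinking exists). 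The point you flag as the main obstacle --- ensuring the resulting homotopies are homotopies \emph{of pairs}, keeping $A$ in $A$ throughout --- is exactly the delicate step, and it hinges on having arranged $f\circ i_X = \tau_n$ strictly (not just up to homotopy) before pulling back, which the homotopy extension property of the cofibration $i_X$ (and of $t_n$) allows.
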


We can directly define the generalized relative category of a cofibration, as follows:

\begin{definition}
Let $i_X:A\hookrightarrow X$ be a cofibration. The
\emph{generalized relative category} of $i_X,$
${\sf {relcat}}_g(i_X),$ is defined as the least nonnegative
integer $n$ (or infinity) such that $X$ admits a cover by $n+1$
relatively sectional subsets.
\end{definition}

Similar to the approach taken for the generalized sectional category, the generalized relative category is a homotopy invariant, as shown in \cite[Prop. 2.12]{GC}. Moreover, through a comprehensive examination utilizing tools primarily derived from Subsection \ref{calcines}, it has been established that when addressing ANR spaces, the equality ${\sf {relcat}}(i_X) = {\sf {relcat}}_g(i_X)$ holds:

\begin{theorem}\cite[Th. 2.16]{GC}\label{chulo2}
Let $i_X:A\hookrightarrow X$ be a cofibration between ANR spaces.
Then ${\sf {relcat}}(i_X)={\sf {relcat}}_g(i_X).$
\end{theorem}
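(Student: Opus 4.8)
The plan is to mimic the proof of Theorem \ref{chulo}, replacing ``local homotopy section of a fibration'' with ``relatively sectional open subset of a cofibration'', and to exploit the characterization of $\mathsf{relcat}$ via open covers (Proposition \ref{car-relcat}) together with the dual role played here by the Modified Walsh Lemma. Since the inequality $\mathsf{relcat}(i_X)\geq \mathsf{relcat}_g(i_X)$ is trivial (open covers are in particular covers by arbitrary subsets), the real content is the reverse inequality: given a relatively sectional subset $A\subset S\subset X$ -- so there is a homotopy of pairs $H:(S\times I,A\times I)\to(X,A)$ with $H(-,0)=\mathrm{id}$ and $H(-,1)\in A$ -- I must fatten $S$ to a relatively sectional \emph{open} set $U\supset S$. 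First I would fix a bounded metric on $X$ (and hence the sup-metric $d^*$ on $X^I$, which stays bounded), note that $X^I$ is an ANR since $X$ is, and apply Proposition \ref{epsilon} to $X$ to obtain $\varepsilon:X\to(0,+\infty)$ controlling when two maps into $X$ are homotopic; I would also use continuity/the structure of $X^I$ to produce a control function $\bar\varepsilon$ on $X^I$, exactly as in the proof of Theorem \ref{chulo}.

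Next I would encode the relative section datum as a map into an appropriate path space. The homotopy $H$ on $S$ has an adjoint $\widehat{H}:S\to X^I$ sending $s\mapsto H(s,-)$, a path starting at $s$ and ending in $A$; moreover for $a\in A$ the path $\widehat H(a)$ stays in $A$ throughout. Apply Theorem \ref{walsh2} to $\widehat H:S\to X^I$ (with control $\bar\varepsilon$) to get an open neighbourhood $U\supset S$ in $X$ and a map $G:U\to X^I$ such that for every $u\in U$ there is $s_u\in S$ with $\max\{d^*(G(u),\widehat H(s_u)),\,d(u,s_u)\}<\bar\varepsilon(\widehat H(s_u))$, together with $G_{|S}\simeq \widehat H$. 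Evaluating at the endpoints gives maps $G_0,G_1:U\to X$ with $G_0=G(-)(0)$ and $G_1=G(-)(1)$; the closeness estimate together with $d(u,s_u)$ small forces $G_0$ to be $\mathcal W$-close to $\mathrm{inc}_U$ and $G_1(u)$ to lie near $\widehat H(s_u)(1)\in A$. The homotopy $(u,t)\mapsto G(u)(t)$ is then a candidate homotopy on $U$, constant-ish to identity at $t=0$ and landing near $A$ at $t=1$; one still has to correct it at both ends (using Proposition \ref{epsilon}) so that it genuinely starts at $u$ and genuinely ends \emph{in} $A$, and -- crucially -- so that it restricts to a homotopy \emph{of pairs} on $U$, i.e. keeps $A$ inside $A$ for all $t$.

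The main obstacle is precisely this last point: unlike in Theorem \ref{chulo}, where one only needs two maps into $B$ to become homotopic, here we need a homotopy of \emph{pairs} $(U\times I,A\times I)\to(X,A)$, and the Walsh-type approximation $G$ a priori destroys the sub-relation ``$A\times I$ lands in $A$''. I expect this to be handled by using that $i_X:A\hookrightarrow X$ is a cofibration between ANRs -- so $A$ is itself an ANR, closed in $X$, and the pair $(X,A)$ is suitably ``nice'' (NDR) -- which lets one apply the Walsh Lemma relative to the pair, or equivalently first apply it to $A$ to approximate $H_{|A\times I}$ within $A$, then extend over $U$ using the homotopy extension property of $i_X$ to glue the two approximations into a genuine homotopy of pairs. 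Concretely: run Theorem \ref{walsh2} for the ANR pair so that $s_u$ can be chosen in $A$ whenever $u\in A$, then use the cofibration structure to repair the endpoints while staying within $A$ over $A$. Once a single relatively sectional $S$ has been enlarged to a relatively sectional open $U$, the passage to covers is routine and, combined with Proposition \ref{car-relcat} (applicable since ANRs are normal), yields $\mathsf{relcat}_g(i_X)\geq \mathsf{relcat}(i_X)$, completing the proof.
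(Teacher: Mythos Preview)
The paper does not actually prove this theorem --- it only cites \cite[Th.~2.16]{GC} and remarks that the argument is ``a comprehensive examination utilizing tools primarily derived from Subsection~\ref{calcines}'' --- so there is no detailed proof here to compare against. Your outline is in the right spirit and matches that hint: the non-trivial direction is to thicken a relatively sectional subset $S\supset A$ to a relatively sectional \emph{open} $U$, and the Modified Walsh Lemma applied to the adjoint $\widehat H:S\to X^I$ is the natural tool. You also correctly isolate the crux: unlike in Theorem~\ref{chulo}, one must produce a homotopy \emph{of pairs} $(U\times I,A\times I)\to(X,A)$, and the Walsh approximation $G$ a priori destroys both endpoint conditions and the pair condition.

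Where your proposal remains a genuine sketch is precisely in resolving that crux. Two concrete points. First, ``$G_1(u)$ close to $A$'' does not yield $G_1(u)\in A$; a clean fix is to apply Theorem~\ref{walsh2} with target the space $P(X;A)=\{\alpha\in X^I:\alpha(1)\in A\}$, which is itself an ANR (the inclusion $P(X;A)\hookrightarrow X^I$ is a closed cofibration by \cite[Th.~12]{Str} applied to the fibration $\mathrm{ev}_1:X^I\to X$ and the cofibration $A\hookrightarrow X$, and one then invokes \cite[Chapt.~IV, Th.~3.2]{Hu}); this forces $G_1(U)\subset A$ automatically. Second, your two suggested repairs of the pair condition both need an ingredient you have not supplied: the phrase ``run Theorem~\ref{walsh2} for the ANR pair so that $s_u\in A$ whenever $u\in A$'' asks for control that Theorem~\ref{walsh2} as stated simply does not give --- you would first have to establish a relative Walsh lemma for ANR pairs; and the HEP-based alternative (homotope $G$ so that $G|_A=\widehat H|_A$, then straighten the endpoints) still requires, at the final step, a \emph{relative} form of Proposition~\ref{epsilon} (two $\mathcal V$-near maps of pairs into $(X,A)$ are homotopic \emph{as maps of pairs}) in order to pass from $G_0'\simeq \mathrm{inc}_U$ to a homotopy rel~$A$. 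That relative statement does hold for cofibered ANR pairs, but it is not in the present paper and is exactly the missing lemma in your plan.
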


Upon consideration of Proposition \ref{bridge} along with Theorem \ref{chulo2} mentioned earlier, we deduce that if $X$ is an ANR, then $\tc ^M(X)={\sf {relcat}}_g(\Delta _X)$. Finally, it becomes clear that the monoidal topological complexity aligns with its generalized version when the examined space is an ANR.

\begin{proposition}\label{cor-chulo2}
If $X$ is an ANR space, then $\tc ^M(X)=\tc ^M_g(X)$.
\end{proposition}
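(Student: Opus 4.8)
The plan is to deduce Proposition \ref{cor-chulo2} from the machinery already assembled in the excerpt, treating it as a corollary rather than proving anything from scratch. The key observation is that the notion of a reserved local section of $\pi_X$ on a subset $A \supset \Delta_X(X)$ is exactly a reformulation of $A$ being relatively sectional for the cofibration $\Delta_X : X \hookrightarrow X \times X$: a reserved section $s : A \to X^I$ (with $s(x,x) = c_x$) adjoints to a homotopy of pairs $H : (A \times I, \Delta_X(X) \times I) \to (X \times X, \Delta_X(X))$ witnessing that $A$ deforms into the diagonal rel the diagonal, and conversely. Once this dictionary is in place — and it is essentially the content behind Proposition \ref{bridge}, which identifies $\tc^M(X)$ with ${\sf relcat}(\Delta_X)$ for LEC spaces via precisely this correspondence restricted to open covers — the generalized version follows by applying the same correspondence to arbitrary covers, giving $\tc^M_g(X) = {\sf relcat}_g(\Delta_X)$.

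First I would record that any ANR space $X$ is LEC, since \cite[Chapt. IV, Th. 3.2]{Hu} ensures the diagonal $\Delta_X : X \hookrightarrow X \times X$ is a cofibration (this is already invoked in the proof of the homotopic-distance proposition above). Next I would invoke Proposition \ref{bridge} to get $\tc^M(X) = {\sf relcat}(\Delta_X)$. Then I would observe, using the adjunction between paths in $X$ and homotopies together with the definition of "reserved local section" given just before Definition \ref{tcM}, that open covers of $X \times X$ by subsets carrying reserved local sections of $\pi_X$ correspond bijectively to covers of $X \times X$ by relatively sectional open subsets of the cofibration $\Delta_X$; hence $\tc^M_g(X) = {\sf relcat}_g(\Delta_X)$ by the same argument applied to arbitrary covers (here one uses that $X \times X$ is normal, being metrizable, so that Proposition \ref{car-relcat} applies on the open side). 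Finally, since $X$ and hence $X \times X$ are ANR spaces, Theorem \ref{chulo2} applied to the cofibration $\Delta_X : X \hookrightarrow X \times X$ yields ${\sf relcat}(\Delta_X) = {\sf relcat}_g(\Delta_X)$, and chaining the equalities gives $\tc^M(X) = \tc^M_g(X)$.

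The main obstacle I anticipate is not any deep point but rather making the dictionary between reserved local sections and relatively sectional subsets fully precise — in particular, checking that the "static on the diagonal" condition $s(x,x) = c_x$ corresponds exactly to the homotopy of pairs being stationary on $\Delta_X(X) \times I$, and that one may always arrange $H(x,1) \in \Delta_X(X)$ on the nose rather than just up to homotopy (here the homotopy lifting property of $\pi_X$, or equivalently the fact that $\pi_X$ is a fibrational substitute for $\Delta_X$, does the work, exactly as in the passage between strict and homotopy local sections noted after the definition of generalized sectional category). This is, however, precisely the correspondence underlying Proposition \ref{bridge}, so I would simply remark that it transfers verbatim from open covers to arbitrary covers and cite \cite[Th. 12]{C-G-V} and \cite[Th. 2.16]{GC} for the substantive inputs. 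One could alternatively phrase the entire proof in one line: combining Proposition \ref{bridge}, Theorem \ref{chulo2}, and the evident identity $\tc^M_g(X) = {\sf relcat}_g(\Delta_X)$ for LEC $X$ gives the claim.
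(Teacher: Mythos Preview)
Your approach is essentially the paper's, but with one overclaim that you should tighten. The paper argues via the sandwich
\[
\tc^M(X)={\sf relcat}_g(\Delta_X)\leq \tc^M_g(X)\leq \tc^M(X),
\]
where the first equality is exactly your combination of Proposition~\ref{bridge} and Theorem~\ref{chulo2}, the middle inequality comes from the explicit homotopy $H((x,y),t):=(s(x,y)(t),y)$ turning a reserved section on an arbitrary subset $A$ into a witness that $A$ is relatively sectional for $\Delta_X$, and the last inequality is trivial.

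You instead assert the \emph{equality} $\tc^M_g(X)={\sf relcat}_g(\Delta_X)$ directly, via a ``bijective dictionary'' on each subset. The direction reserved $\Rightarrow$ relatively sectional is the paper's homotopy above and is indeed immediate. The converse, however, is not evident: starting from a pair-homotopy $H=(h_1,h_2)$ one naturally builds $s(x,y)=h_1(x,y,\cdot)*\overline{h_2(x,y,\cdot)}$, but then $s(x,x)=\gamma*\bar\gamma$ rather than $c_x$, and straightening this to the constant path on the nose requires an additional argument (e.g.\ using that $\Delta_X(X)\hookrightarrow A$ inherits the cofibration structure to first deform $H$ to be stationary on the diagonal). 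Your appeal to Proposition~\ref{bridge} does not cover this, since \cite[Th.~12]{C-G-V} is proved through the Whitehead/fat-wedge description of ${\sf relcat}$, not through a set-by-set correspondence that would ``transfer verbatim'' to arbitrary subsets. The good news is that this converse is entirely unnecessary: once you have ${\sf relcat}_g(\Delta_X)\leq \tc^M_g(X)$, the trivial inequality $\tc^M_g(X)\leq \tc^M(X)$ already closes the chain---and that is precisely what the paper does.
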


\begin{proof}
Since any subset $A\subset X\times X$ with a reserved local section $s$ gives rise to a homotopy $H(x,y,t):=(s(x,y)(t),y)$ making $A$ a relatively sectional subset with respect to the cofibration $\Delta _X,$ we obtain the inequality ${\sf {relcat}}_g(\Delta _X)\leq \tc ^M_g(X)$. Therefore
$$\tc ^M(X)={\sf {relcat}}_g(\Delta
_X)\leq \tc ^M_g(X)$$ \noindent concluding that $\tc ^M(X)=\tc ^M_g(X).$
\end{proof}

To conclude this work, we will explain the Fadell-Husseini-like approach to monoidal topological complexity announced at the end of Remark \ref{rem-iwase-sakai}, as presented by J. Aguilar-Guzm\'an and J. Gonz\'alez \cite{A-G}. This approach is inspired by the definition of relative category in the sense of Fadell-Husseini, introduced in \cite{F-H}.

\begin{definition}\label{FHM}
The \emph{Fadell-Husseini monoidal topological complexity} of a space $X$, denoted by $\tc ^{FH}(X)$, is the smallest nonnegative integer $n$ such that $X\times X$ can be covered by $n+1$ open subsets, $X\times X=U_0\cup U_1\cup \cdots \cup U_n$, on each of which there exists a continuous section $s_i:U_i\rightarrow X^I$ of $\pi _X$ such that:
\begin{enumerate}
\item[(1)] $U_0$ contains the diagonal $\Delta _X(X)$;

\item[(2)] $s_0(x,x)=c_x$, the constant path at $x,$ for all $x\in X$;

\item[(3)] $\Delta _X(X)\cap U_i=\emptyset$ for all $i\geq 1.$
\end{enumerate}
If such an integer $n$ does not exist, then $\tc ^{FH}(X)=\infty $. The Fadell-Husseini \emph{generalized} monoidal topological complexity, denoted by $\tc _g^{FH}(X)$, is similarly defined using coverings that are not necessarily open.
\end{definition}

\begin{remark}\label{remarcon}
If $X$ is a Hausdorff space (in other words, the diagonal $\Delta _X(X)$ is closed in $X\times X$), it follows directly from the definitions that  $\tc^{DM}(X)\leq \tc ^{FH}(X)\leq \tc ^M(X).$ Consequently, Proposition \ref{Dranish-M} and Proposition \ref{cor-chulo2} prove that
$$\tc^{DM}(X)=\tc ^{FH}(X)=\tc ^M(X)=\tc ^M_g(X)$$ \noindent whenever $X$ is an ANR space.
\end{remark}

As pointed out by J. Aguilar-Guzm\'an and J. Gonz\'alez, condition (3) in Definition \ref{FHM} may be disregarded without changing the value of $\tc _g^{FH}(X)$, since the diagonal can be excluded, if necessary, from the sets $U_1,...,U_n.$ A comparable observation holds true in the non-generalized context, provided that $X$ is a Hausdorff space. Furthermore, they also present the following remarkable result:

\begin{proposition}\cite[Th. 1.4]{A-G}\label{overlook}
If $X$ is a Hausdorff LEC space with $X\times X$ being normal, then we can safely overlook condition (2) in Definition \ref{FHM} without affecting the resulting value of $\tc^{FH}(X)$. Likewise, in the generalized setting, if $X$ is an ANR space, the conclusion remains unchanged.
\end{proposition}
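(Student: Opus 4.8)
The goal is to show that the value of $\tc^{FH}(X)$ (respectively $\tc^{FH}_g(X)$) is unaffected if we drop condition (2), namely the requirement that $s_0(x,x)=c_x$ on the diagonal. One direction is trivial: any cover witnessing $\tc^{FH}(X)$ in the original sense certainly witnesses it when condition (2) is waived, so we only need to produce, from a cover $X\times X = U_0\cup\cdots\cup U_n$ with sections $s_i$ satisfying (1) and (3) but \emph{not} necessarily (2), a new cover and new sections that do satisfy (2). The natural move is to modify $s_0$ on a neighborhood of the diagonal so that it becomes constant exactly on $\Delta_X(X)$, while leaving it essentially unchanged away from the diagonal; the sets $U_1,\dots,U_n$ and their sections are untouched (after possibly shrinking them away from $\Delta_X$, which is permitted by the remark preceding the proposition since $X$ is Hausdorff).

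The key step is the homotopy-theoretic input. Since $X$ is LEC, the diagonal $\Delta_X:X\to X\times X$ is a cofibration, so $\Delta_X(X)$ is a closed subspace of $X\times X$ with the homotopy extension property, and (because $X\times X$ is normal) the pair $(X\times X, \Delta_X(X))$ admits a Str{\o}m-type structure: there is an open neighborhood $V$ of the diagonal in $U_0$ that deformation-retracts into $\Delta_X(X)$, together with a "Urysohn-type" function controlling the deformation. The section $s_0:U_0\to X^I$ may be regarded, via the exponential law, as a homotopy; the strategy is to reparametrize $s_0$ near the diagonal using this deformation retraction data, "absorbing" the defect $s_0(x,x)\ne c_x$. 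Concretely, one produces a homotopy, relative to $X\times X\setminus V$, from $s_0$ to a section $s_0'$ for which $s_0'(x,x)=c_x$ on the diagonal — this is exactly the type of argument used to prove Proposition \ref{bridge} and Theorem \ref{chulo2}, and it is where the LEC hypothesis and normality of $X\times X$ do their work. In the generalized setting, the ANR hypothesis on $X$ is invoked in place of the explicit normality/LEC bookkeeping: $X\times X$ and $X^I$ are then ANR spaces (by \cite[Chapt. II, Prop. 4.4]{Hu} and \cite[Chapt. VI, Th. 2.4]{Hu}), the diagonal is a cofibration by \cite[Chapt. IV, Th. 3.2]{Hu}, and one can run the same neighborhood-deformation argument using the tools of Subsection \ref{calcines} (in particular Theorem \ref{walsh2} and Theorem \ref{chulo2}), without needing the covers to be open.

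The main obstacle I anticipate is the bookkeeping required to guarantee that after modifying $s_0$ the collection still forms an \emph{open} cover with genuinely continuous sections in the non-generalized case: one must ensure the deformation is carried out on an open set, stays within $U_0$, is relative to the complement of a neighborhood of $\Delta_X(X)$ so that the globally-defined $s_0$ remains continuous after patching, and does not disturb conditions (1) and (3). This is precisely the content that makes the hypothesis "$X\times X$ normal" (rather than just $X$ Hausdorff) necessary — it supplies the Urysohn function needed to interpolate between the modified and unmodified parts of $s_0$ continuously. Once that interpolation is in hand, the argument for covers follows formally, and the generalized case is lighter precisely because arbitrary subsets are allowed, so the open-ness constraints that cause the trouble simply evaporate and one appeals directly to Theorem \ref{chulo2} applied to $\Delta_X$.
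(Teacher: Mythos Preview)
The paper does not actually supply a proof of this proposition: it is a survey, and Proposition~\ref{overlook} is merely quoted from \cite[Th.~1.4]{A-G} with no argument given. There is therefore no ``paper's own proof'' against which to compare your attempt.

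That said, your outline is pointed in the right direction --- use the LEC hypothesis to produce a Str{\o}m-type structure for the closed cofibration $\Delta_X(X)\hookrightarrow X\times X$, then use normality of $X\times X$ to interpolate between the given section $s_0$ and a reserved one near the diagonal --- and this is indeed the flavour of the argument in \cite{A-G}. One point you gloss over deserves a warning, however. You propose to homotope $s_0$, rel the complement of a neighbourhood $V$ of the diagonal, to a section $s_0'$ with $s_0'(x,x)=c_x$. The difficulty is that the obvious homotopy in $X^I$ from $s_0|_{\Delta}$ to the constant-path section (coming from the contraction $X^I\simeq X$) is \emph{not} a homotopy through sections of $\pi_X$: applying the homotopy extension property for $\Delta_X(X)\hookrightarrow U_0$ directly would produce an $s_0'$ that need not satisfy $\pi_X\circ s_0'=\mathrm{inc}$, and then repairing the section property via the homotopy lifting property of $\pi_X$ may destroy the condition $s_0'|_{\Delta}=c$. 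The actual argument must weave these two constraints together, and this is precisely where the equiconnecting map furnished by the LEC hypothesis (not just the bare cofibration property) is used explicitly to build the corrected section by hand. Your sketch acknowledges the bookkeeping hazard but does not isolate this specific tension; it is the heart of the matter.
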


Our objective now is to establish that $\tc^{FH}(X)=\tc_g^{FH}(X)$ for an ANR space $X$.  For this purpose, J. Aguilar-Guzm\'an and J. Gonz\'alez introduced Fadell-Husseini type variations for the concepts of ${\sf {relcat}}(i_X)$ and ${\sf {relcat}}_g(i_X)$, where $i_X:A\hookrightarrow X$ represents a cofibration.

\begin{definition}
Let $i_X:A\hookrightarrow X$ be a cofibration. The
\emph{Fadell-Husseini relative category} of $i_X,$ denoted by
${\sf {relcat}}^{FH}(i_X),$ is defined as the least nonnegative
integer $n$ (or infinity) such that $X$ admits an open cover $\{U_i\}_{i=0}^n$ satisfying:
\begin{enumerate}
\item[(a)] $U_0$ is relatively sectional;

\item[(b)] for $i\geq 1,$ $U_i\cap A=\emptyset $ and there are homotopies $H_i:U_i\times I\rightarrow X$ such that $H_i(x,0)=x$ and $H_i(x,1)\in A,$ for all $x\in U_i$.

\end{enumerate}
If such an integer does not exist then we set ${\sf {relcat}}^{FH}(i_X)=\infty .$ The Fadell-Husseini \emph{generalized} relative category of $i_X$, denoted by ${\sf {relcat}}_g^{FH}(i_X)$, is similarly defined using general coverings that are not necessarily open.
\end{definition}

Once more, employing methodologies akin to those elucidated in \cite{GC}, we derive the ensuing result:

\begin{theorem}\cite[Prop. 3.3 and Prop. 3.7]{A-G}\label{teorAG}
If $i_X: A \hookrightarrow X$ is a cofibration and $X$ is a normal space, then ${\sf {relcat}}(i_X)={\sf {relcat}}^{FH}(i_X)$. If, in addition, both $A$ and $X$ are ANR spaces, then ${\sf {relcat}}^{FH}(i_X)={\sf {relcat}}(i_X)={\sf {relcat}}_g^{FH}(i_X)$.
\end{theorem}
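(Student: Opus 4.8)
The plan is to prove the chain of equalities in two stages, mirroring the structure of Theorem~\ref{chulo2} and its proof via the Modified Walsh Lemma. First I would establish ${\sf {relcat}}(i_X)={\sf {relcat}}^{FH}(i_X)$ for a cofibration $i_X:A\hookrightarrow X$ with $X$ normal. The inequality ${\sf {relcat}}(i_X)\leq {\sf {relcat}}^{FH}(i_X)$ is immediate, since any cover witnessing ${\sf {relcat}}^{FH}$ is in particular a cover by relatively sectional open subsets (condition (b) differs from relative sectionality only by discarding the pair-homotopy requirement, which is automatic once $U_i\cap A=\emptyset$), so Proposition~\ref{car-relcat} applies. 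For the reverse inequality, start with an open cover $\{V_i\}_{i=0}^n$ of $X$ by relatively sectional sets as provided by Proposition~\ref{car-relcat}; keep $V_0$ as is, and for $i\geq 1$ shrink $V_i$ away from $A$. Concretely, by normality one can find an open cover $\{W_i\}$ with $\overline{W_i}\subset V_i$ and, since $A$ is closed, replace $V_i$ ($i\geq 1$) by $V_i\setminus \overline{W_0'}$ for a suitable open set $W_0'$ with $A\subset W_0'\subset \overline{W_0'}\subset V_0$, re-indexing so that $U_0=V_0$ absorbs the removed neighborhood of $A$. The relative sectionality of $V_0$ supplies the homotopy of pairs for $U_0$, and restricting the relative-sectionality homotopy of $V_i$ to $U_i$ gives the required $H_i$ with $U_i\cap A=\emptyset$. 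This is a routine shrinking-lemma argument.

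Second, and this is where the real content lies, I would prove ${\sf {relcat}}^{FH}(i_X)={\sf {relcat}}_g^{FH}(i_X)$ when both $A$ and $X$ are ANR spaces. Again one inequality (${\sf {relcat}}_g^{FH}\leq {\sf {relcat}}^{FH}$) is trivial. For the other, the approach is to take a generalized cover $\{A_i\}_{i=0}^n$ witnessing ${\sf {relcat}}_g^{FH}$ and fatten each $A_i$ to an open set with the same relative-sectionality-type property, exactly as in the proof of Theorem~\ref{chulo}, but now applied to the relevant structure maps. For $A_0$ one expands it to a relatively sectional open $U_0$ by the same argument that proves ${\sf {relcat}}(i_X)={\sf {relcat}}_g(i_X)$ in Theorem~\ref{chulo2}. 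For $A_i$, $i\geq 1$: here we have a homotopy $H_i:A_i\times I\to X$ with $H_i(x,0)=x$, $H_i(x,1)\in A$, and $A_i\cap A=\emptyset$. Viewing this homotopy through its adjoint $\widehat{H_i}:A_i\to X^I$ (a partial section-like map into the path space, landing in paths that start in $X$ and end in $A$), I would invoke Theorem~\ref{walsh2} with a suitably chosen $\varepsilon:X^I\to(0,+\infty)$ encoding a $\mathcal{W}$-type cover of $X$ (from Proposition~\ref{epsilon}) together with a continuity modulus for the evaluations at $0$ and at $1$, and also incorporating the closed set $A$: since $A$ is itself an ANR, $X^I_A:=\{\alpha\in X^I:\alpha(1)\in A\}$ is an ANR (as a pullback of the ANR $P'A$ along the ANR $X^I$, using the cofibration $A\hookrightarrow X$ and the stability results cited in the text). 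Applying Walsh inside $X^I_A$ produces an open $U_i\supset A_i$ and a continuous $G_i:U_i\to X^I_A$ whose endpoint-at-$0$ map is $\mathcal{W}$-close to $inc_{U_i}$, hence homotopic to it, while the endpoint-at-$1$ map lands in $A$; concatenating the $\mathcal{W}$-homotopy with the path $G_i(u)$ yields the required $H_i$ on $U_i$. Shrinking $U_i$ slightly (using that $A$ is closed and $A_i\cap A=\emptyset$, plus Remark~\ref{coment} to keep $a_u$ close to $u$) ensures $U_i\cap A=\emptyset$ is preserved.

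The main obstacle I anticipate is the bookkeeping for the $i\geq 1$ case: one must simultaneously (i) guarantee that the ANR structure is available on the relevant path-space-with-endpoint-constraint $X^I_A$ so that Theorem~\ref{walsh2} applies, which requires knowing that $A\hookrightarrow X$ being a cofibration between ANRs makes $X^I_A$ an ANR---this should follow from \cite[Chapt. VI]{Hu} and the fact that $X^I_A$ is the pullback $X^I\times_X A^{\{1\}}$ along the fibration $\mathrm{ev}_1$, but it deserves a careful citation; (ii) arrange the $\varepsilon$-function on $X^I$ so that $\mathcal{W}$-closeness of the time-$0$ endpoints is inherited, which is the same device used in the proof of Theorem~\ref{chulo} (define $\bar\varepsilon$ via a continuity modulus of $\mathrm{ev}_0$); and (iii) ensure the disjointness $U_i\cap A=\emptyset$ survives the fattening, which is why Remark~\ref{coment}'s control on $d(u,a_u)$ is essential---choose $\varepsilon$ small enough near the (closed) set $A$ that no point within that distance of $A_i$ can lie in $A$. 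Once these three points are handled, stitching the local data into a cover of $X$ and reading off the inequality ${\sf {relcat}}^{FH}(i_X)\leq {\sf {relcat}}_g^{FH}(i_X)$ is formal, and combining with the first stage and Proposition~\ref{invariance}-style homotopy invariance completes the chain ${\sf {relcat}}^{FH}(i_X)={\sf {relcat}}(i_X)={\sf {relcat}}_g^{FH}(i_X)$.
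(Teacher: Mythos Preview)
There is a genuine gap in your first stage. You assert that ${\sf relcat}(i_X)\leq {\sf relcat}^{FH}(i_X)$ is immediate because ``any cover witnessing ${\sf relcat}^{FH}$ is in particular a cover by relatively sectional open subsets.'' That is false: for $i\geq 1$ the Fadell--Husseini sets satisfy $U_i\cap A=\emptyset$, so $A\not\subset U_i$ and $U_i$ is \emph{not} relatively sectional---the definition of relative sectionality explicitly requires $A\subset U$. You have the two directions reversed. The inequality that is actually immediate is ${\sf relcat}^{FH}(i_X)\leq {\sf relcat}(i_X)$: given a cover $\{V_i\}$ by relatively sectional opens, put $U_0=V_0$ and $U_i=V_i\setminus A$ for $i\geq 1$ (open since $A$ is closed), and restrict the homotopies; coverage survives because $A\subset U_0$. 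This is essentially what your ``reverse inequality'' paragraph does, only more elaborately than necessary.

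The direction you dismissed as trivial, ${\sf relcat}(i_X)\leq {\sf relcat}^{FH}(i_X)$, is the one with content: starting from an FH cover one must \emph{enlarge} each $U_i$ ($i\geq 1$) to an open set containing $A$ that carries a homotopy of pairs rel $A$. The standard device uses normality together with the cofibration structure: choose an open $W$ with $A\subset W\subset\overline W\subset U_0$, set $V_i=U_i\cup U_0$, and glue the relatively sectional homotopy of $U_0$ (stationary on $A$) with $H_i$ on $U_i$ via a Urysohn reparametrization supported on $U_0\setminus W$. Without this step your first stage is incomplete. Your second stage, passing from ${\sf relcat}_g^{FH}$ to ${\sf relcat}^{FH}$ via the Modified Walsh Lemma on a path space with endpoint constraint, is in line with what the survey indicates (it gives no proof here beyond the remark that the methods are ``akin to those elucidated in \cite{GC}''), and your outline there is reasonable; points (i) and (iii) are indeed where the care is needed.
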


Finally, we can conclude with the desired result.

\begin{proposition}\cite[Th. 1.3]{A-G}\label{final}
If $X$ is an ANR space, then $\tc^{FH}(X)=\tc_g^{FH}(X)$. Therefore, in this case we have the following chain of equalities
$$\tc ^{FH}(X)=\tc_g^{FH}(X)=\tc^{DM}(X)=\tc ^M(X)=\tc ^M_g(X).$$
\end{proposition}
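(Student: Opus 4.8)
The plan is to derive the equality $\tc^{FH}(X)=\tc_g^{FH}(X)$ by reducing it, just as was done for $\tc^M(X)$ in Proposition \ref{cor-chulo2}, to the corresponding statement for the Fadell-Husseini relative category of the diagonal cofibration, namely to the already-available Theorem \ref{teorAG}. Since $X$ is an ANR, it is in particular a normal LEC space, so $\Delta_X:X\hookrightarrow X\times X$ is a cofibration between ANR spaces ($X\times X$ is an ANR by the product stability of ANRs). Hence Theorem \ref{teorAG} applies and gives
$$\textsf{relcat}^{FH}(\Delta_X)=\textsf{relcat}(\Delta_X)=\textsf{relcat}_g^{FH}(\Delta_X).$$
The remaining task is to connect these relative-category numbers with $\tc^{FH}$ and $\tc_g^{FH}$.

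First I would establish the translation $\tc^{FH}(X)=\textsf{relcat}^{FH}(\Delta_X)$ and its generalized analogue $\tc_g^{FH}(X)=\textsf{relcat}_g^{FH}(\Delta_X)$. In one direction, given an open (resp. arbitrary) cover $\{U_i\}$ of $X\times X$ with sections $s_i:U_i\to X^I$ as in Definition \ref{FHM}, set $H_i(x,y,t):=(s_i(x,y)(t),y)$; then $H_0$ is a homotopy of pairs witnessing that $U_0$ is relatively sectional with respect to $\Delta_X$ (using $s_0(x,x)=c_x$ so that $A=\Delta_X(X)$ is preserved), and for $i\ge 1$ condition (3) gives $U_i\cap\Delta_X(X)=\emptyset$ while $H_i$ deforms $U_i$ into $\Delta_X(X)$; this is exactly conditions (a) and (b). Conversely, given a cover realizing $\textsf{relcat}^{FH}(\Delta_X)$, the homotopies $H_i$ into the diagonal produce, coordinatewise, paths from $(x,y)$ to a point of the diagonal, which can then be concatenated with a fixed path within the diagonal to land at the prescribed endpoint, yielding sections $s_i$ of $\pi_X$; one must check these can be chosen reserved on $U_0$, which follows because on $\Delta_X(X)$ the homotopy $H_0$ of pairs can be taken stationary, so the induced path is constant. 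This is the same bookkeeping as in the proof of Proposition \ref{bridge}/Proposition \ref{cor-chulo2}, now carried out with the extra Fadell-Husseini conditions (1)--(3) tracked alongside.

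Having both translations, the conclusion is immediate:
$$\tc^{FH}(X)=\textsf{relcat}^{FH}(\Delta_X)=\textsf{relcat}_g^{FH}(\Delta_X)=\tc_g^{FH}(X),$$
using Theorem \ref{teorAG} for the middle equality. The full chain then follows by appending Remark \ref{remarcon}, which already records $\tc^{DM}(X)=\tc^{FH}(X)=\tc^M(X)=\tc^M_g(X)$ for ANR spaces (via Proposition \ref{Dranish-M} and Proposition \ref{cor-chulo2}). I expect the main obstacle to be the careful verification that, in passing from a relatively sectional open set $U_0$ back to a \emph{reserved} section $s_0$ (constant paths on the diagonal), the homotopy of pairs can be normalized to be stationary on $A$; this is a standard maneuver for cofibrations, but it is the one place where the Fadell-Husseini refinement genuinely interacts with the reserved-section requirement rather than being bookkeeping, and invoking Proposition \ref{overlook} (condition (2) can be dropped for ANR $X$) may be the cleanest way to sidestep it entirely.
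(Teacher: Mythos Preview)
Your approach is correct but takes a heavier route than the paper. The paper does \emph{not} establish the full translation $\tc_g^{FH}(X)=\textsf{relcat}_g^{FH}(\Delta_X)$; it only proves the easy inequality $\textsf{relcat}_g^{FH}(\Delta_X)\leq \tc_g^{FH}(X)$ via the same homotopy $H_i(x,y,t)=(s_i(x,y)(t),y)$ that you use, and then closes a sandwich: from Remark \ref{remarcon}, Proposition \ref{bridge} and Theorem \ref{teorAG} one already has $\tc^{FH}(X)=\tc^M(X)=\textsf{relcat}(\Delta_X)=\textsf{relcat}_g^{FH}(\Delta_X)$, so combining with the trivial inequality $\tc_g^{FH}(X)\leq \tc^{FH}(X)$ forces equality throughout. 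The reverse passage---from relatively sectional data back to \emph{reserved} sections of $\pi_X$---is never needed.

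What you gain by your route is an independent identification $\tc^{FH}(X)=\textsf{relcat}^{FH}(\Delta_X)$ (and its generalized analogue), which is conceptually pleasing; what it costs is exactly the obstacle you flag at the end. Your suggested fix via Proposition \ref{overlook} is correct and is the cleanest way to handle it. One small point: in your converse construction the phrase ``concatenated with a fixed path within the diagonal'' is unnecessary---the homotopy into $\Delta_X(X)$ already gives component paths $\alpha:x\leadsto z$ and $\beta:y\leadsto z$ meeting at a common endpoint, so $\alpha * \bar\beta$ is directly a section of $\pi_X$; no auxiliary path along the diagonal is needed. With that minor correction and the appeal to Proposition \ref{overlook}, your argument goes through, though the paper's sandwich avoids this direction altogether.
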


\begin{proof}
Combining Remark \ref{remarcon}, Proposition \ref{bridge}, and Theorem \ref{teorAG}, we obtain that $\tc^{FH}(X)=\tc^{M}(X)={\sf {relcat}}(\Delta_X)={\sf {relcat}}_g^{FH}(\Delta_X).$
However, ${\sf {relcat}}_g^{FH}(\Delta_X)\leq \tc^{FH}_g(X)$ (and therefore $\tc^{FH}(X)=\tc_g^{FH}(X)$). Indeed, suppose that $\tc_g^{FH}(X)=n$, and let $\{U_i\}_{i=0}^n$ be a cover (not necessarily open) of $X\times X$ satisfying the conditions given in Definition \ref{FHM}. Then, it is immediate to check that for all $i\geq 0$, the expression $H_i(x,y,t):=(s_i(x,y)(t),y)$ gives rise to the inequality ${\sf {relcat}}_g^{FH}(\Delta_X)\leq n.$
\end{proof}

\begin{remark}
As a consequence of Proposition \ref{overlook} and Proposition \ref{final}, we conclude that the Iwase-Sakai conjecture holds true for any ANR space $X$ provided there exists a (not necessarily monoidal) motion planner with $\tc(X)+1$ (not necessarily open) motion planners, one of which contains the diagonal $\Delta_X(X)$ (compare with \cite[Cor. 3 in Erratum]{I-S}).
\end{remark}




\bigskip
\textbf{Funding.}
This work was supported by the project PID2020-118753GB-I00 from the Spanish Ministry of Science and Innovation.

\end{document}